\renewcommand{\@biblabel}[1]{[#1]\hfill}
\let\NAT@parse\undefined
\let\tmp\newinsert
\let\newinsert\newbox
\let\newinsert\tmp
\newtheorem{theorem}{Theorem}
\newtheorem{lem}{Lemma}
\newtheorem{rem}{Remark}
\newtheorem{corollary}{Corollary}
\newtheorem{property}{Conjecture}
\let\oldTheorem\theorem
\renewcommand{\theorem}{\oldTheorem\normalfont}
\let\oldLemma\lem
\renewcommand{\lem}{\oldLemma\normalfont}
\let\oldCorollary\corollary
\renewcommand{\corollary}{\oldCorollary\normalfont}
\let\oldDefinition\definition
\renewcommand{\definition}{\oldDefinition\normalfont}
\let\oldRemark\rem
\renewcommand{\rem}{\oldRemark\normalfont}
\let\oldAssumption\assumption
\renewcommand{\assumption}{\oldAssumption\normalfont}
\let\oldExample\example
\renewcommand{\example}{\oldExample\normalfont}
\let\oldProperty\property
\renewcommand{\property}{\oldProperty\normalfont}
\newcommand{\boldmin}{\mathord{\begin{tikzpicture}[baseline=0ex, line width=2, scale=0.13]	\draw (0,0.7) -- (2,0.7);\end{tikzpicture}}}
\newcommand{\thinmin}{\mathord{\begin{tikzpicture}[baseline=0ex, line width=0.5, scale=0.13]	\draw (0,0.7) -- (2,0.7);\end{tikzpicture}}}
\newcommand{\thinmins}{\mathord{\begin{tikzpicture}[baseline=0ex, line width=0.5, scale=0.13]	\draw (0,0.7) -- (0.7,0.7);\end{tikzpicture}}}
\newlength\figureheight
\newlength\figurewidth
\newcommand{\corrected}[1]{{\color{black}#1}}
\newcommand{\correct}[1]{{\color{black}#1}}
\colorlet{green60}{green!60!black} 
\colorlet{orange90}{orange!90!black} 
\definecolor{green}{rgb}{0,0.5,0}
\newcommand{\mapA}[3]{#1_{#2}^{\sqrt{h}}(#3)}
\newcommand{\map}[2]{#1_{#2}^{\sqrt{h}}}
\newcommand{\allone}[1]{\mathds{1}_{#1}}
\newcommand{\zeros}[1]{0}
\pgfplotsset{select coords between index/.style 2 args={
		x filter/.code={
			\ifnum\coordindex<#1\fi
			\ifnum\coordindex>#2\fi
		}
}}
\newcommand*{\GetElement}[4]{%
	\pgfplotstablegetelem{#2}{#3}\of{#1}%
	\let#4\pgfplotsretval
}
\newcommand{\areapos}{%
	\mathord{
	\begin{tikzpicture}[baseline, scale = 0.2]%
	\draw[draw = green60, fill = green60] (0,0) rectangle (1,1) {};
	\end{tikzpicture}
	}
}
\newcommand{\areaneg}{%
	\mathord{
		\begin{tikzpicture}[baseline, scale = 0.2]%
		\draw[pattern=north west lines, draw = orange90, pattern color = orange90] (0,0) rectangle (1,1) {};
		\end{tikzpicture}
	}
}
\tikzset{cross/.style={cross out, draw, minimum size=2*(#1-\pgflinewidth), inner sep=0pt, outer sep=0pt},
cross/.default={1pt}}
\newcommand{\cross}{%
	\mathord{
		\begin{tikzpicture}[baseline = 0ex, scale = 1]%
		\node[cross = 3pt, thick] at (0,0.09) {};
		\end{tikzpicture}
	}
}
\title{\LARGE \textbf
Gradient Approximation and Multi-Variable Derivative-Free Optimization based on Non-Commutative Maps
}
\begin{document}

\date{}
\author[1]{Jan Feiling}
\author[2]{Mohamed-Ali Belabbas}
\author[3]{Christian Ebenbauer}
\affil[1]{Institute for Systems Theory and Automatic Control, University of Stuttgart, Germany \protect\\ \texttt{\small jan.feiling@ist.uni-stuttgart.de}}
\affil[2]{Coordinated Science Laboratory, University of Illinois at Urbana-Champaign, United States \protect\\ \texttt{\small belabbas@illinois.edu}}
\affil[3]{Chair of Intelligent Control Systems, RWTH Aachen University, Germany \protect\\ \texttt{\small christian.ebenbauer@ic.rwth-aachen.de}
 \protect\\[1em]}

\maketitle

\begin{abstract}
\textbf{Abstract.} In this work, multi-variable derivative-free optimization algorithms for unconstrained optimization problems are developed. A novel procedure for approximating the gradient of multi-variable objective functions based on non-commutative maps is introduced. The procedure is based on the construction of an exploration sequence to specify where the objective function is evaluated and the definition of so-called gradient generating functions which are composed with the objective function such that the procedure mimics a gradient descent algorithm. Various theoretical properties of the proposed class of algorithms are investigated and numerical examples are presented. 
\end{abstract}

\section{INTRODUCTION}\label{sec:intro}
%
%
A key ingredient in the solution of problems arising in machine learning, real-time decision making, and control are sophisticated optimization algorithms. Hence, improving existing optimization algorithms and developing novel algorithms is of central importance in these areas. The optimization problems therein are often very challenging, i.e., they are high-dimensional, non-convex, non-smooth, or of stochastic nature. 
In addition, in some applications the evaluation of the objective to be optimized involves noisy measurements or the mathematical description of the objective is unknown.
For this type of problems, a promising class of algorithms are derivative-free algorithms \cite{conn2009introduction}, which typically need only evaluations of the objective function for optimization. 
Due to the increasing computational power and the generic applicability, derivative-free optimization algorithms have gained renewed interest in recent years, especially in the field of machine learning and control \cite{nesterov2017random,duchi2015optimal,torczon1997convergence,spall1992multivariate, benosman2016learning,wildhagen2018characterizing,
golovin2017google,flaxman2004online,mania2018simple}.
%

%
%
In this paper, we propose a novel class of derivative-free optimization algorithms 
based on a concept introduced in \cite{jfDerFree}. The key idea is to use non-commutative maps to evaluate the objective function at certain points such that the composition of the maps approximates a gradient descent step. 
The class of proposed algorithms is built upon \textit{two main ingredients}: an \textit{exploration sequence} indicating where the objective is to be evaluated, and the \textit{(gradient) generating functions}, which are composed with the objective function in such a way that an approximation of a gradient descent step is obtained.
The resulting algorithms have several noteworthy properties. For example, 
the algorithms are sometimes able to overcome local minima and robust against noisy objective function evaluations.
Such properties are also known from so-called extremum seeking algorithms (cf. e.g. \cite{durr2013lie,ariyur2003real, guay2003adaptive, tan2010extremum}), which are related to our proposed algorithms \cite{wildhagen2018characterizing,jfDerFree}.
%

%
%
In our preliminary work \cite{jfDerFree}, the algorithms were limited to optimization problems with one decision (optimization) variable or to a coordinate-wise application of 
the gradient approximation scheme.
Moreover, only a special case of generating functions were discussed and
no full characterization was given. In another related work \cite{jfESNCM}, the optimization procedure of \cite{jfDerFree} was extended to discrete-time extremum seeking problems, but still limited to one optimization variable.
%

%
%
More broadly related work in terms of gradient approximation schemes are for example finite difference approximations \cite{blum1954multidimensional, kiefer1952stochastic}, simultaneous perturbation stochastic approximations \cite{spall1992multivariate}, and random directions stochastic approximations \cite{kushner2012stochastic}; in \cite{khong2015extremum} those approximation techniques are applied to the aforementioned extremum seeking problems.
These methods are based on so-called sample averaging of function evaluations, i.e., the neighborhood of the current candidate solution is explored to approximate the local gradient of the optimization objective.
In contrast, in the presented work, no numerical differentiation is performed to extract gradient information, instead a kind of numerical integration scheme is utilized to approximate first order information.
%

%
%
The main contribution of this work is fourfold: 
1) a constructive procedure for determining suitable exploration sequences for multi-variable optimization problems is presented,
2) a general class of (gradient) generating functions is characterized, 
3) the so-called single and two-point algorithms in \cite{jfDerFree} and \cite{jfESNCM} are extended to the multi-variable case, 
and 4) a toolbox is developed to easily design and apply the novel class of optimization algorithms to unconstrained optimization problems.
%

%
%
\textit{Notation.} The set of real numbers equal or greater than $k$ is denoted by $\mathbb{R}_{\ge k} = \{ x\in\mathbb{R}\,|\, x\ge k \}$. The class of $k$-times continuously differentiable functions is denoted by $C^k(\mathbb{R}^n;\mathbb{R})$. $I \in \mathbb{R}^{n \times n}$ stands for the $n$-dimensional unit matrix, $\allone{} \in \mathbb{R}^{n}$ for the $n$-dimensional all-one vector, and $e_i \in \mathbb{R}^{n}$ for the \correct{$i$-th} $n$-dimensional unit vector. The matrix $P\in\mathbb{R}^{n\times n}$ has the principal submatrix $P_{1:r}\in\mathbb{R}^{r\times r}$ with $r<n$. 
The bijective mapping $\pi:\{1,\ldots,n\} \rightarrow \{1,\ldots,n\}$ with $n \in \mathbb{N}$ denotes a permutation function. 
A sequence $\corrected{w_{0},\ldots,w_{m-1}}$ of length $m$ is denoted by $\{w_\ell\}_{\ell=0}^{m-1}$. The ceiling and floor operator are defined as $\lfloor x \rfloor := \max\{k\in\mathbb{Z}\,|\,k\le x\}$ and $\lceil x \rceil := \min\{k\in\mathbb{Z}\,|\,k\ge x\}$, respectively. 
A function $f(x;\epsilon): \mathbb{R}^n \times \mathbb{R} → \mathbb{R}^{n}$ is said to be of order $\mathcal{O}(\epsilon)$, if for all compact sets $\mathcal{V}\subseteq \mathbb{R}^n$ there exist an $M\in\mathbb{R}_{>0}$ and $\bar{\epsilon}\in\mathbb{R}_{>0}$ such that for all $x \in \mathcal{V}$ and $\epsilon \in [0,\bar{\epsilon}]$, $|f(x;\epsilon)| \le M \epsilon$. \corrected{The operator $\text{mod}$ takes to integers $k$ and $n$ and returns an integer $k\,\text{mod}\, n$, equal to the remainder of the division of $k$ by $n$.}
A compact set with center point $x^*\in \mathbb{R}^n$ radius $\delta\in\mathbb{R}_{\ge 0}$ and denoted by $\mathcal{U}^{\delta}_{x^*} \subseteq \mathbb{R}^n$ is defined as $\{x \in \mathbb{R}^n:\|x− x^*\|_2 \le \delta \}$.

\section{PROBLEM STATEMENT AND PRELIMINARIES}\label{sec:problem}
\subsection{Problem Statement}
\label{sec:gen_prob}
In this work, we develop a class of algorithms to solve unconstrained minimization problems
\begin{align}
	\min_{x\in\mathbb{R}^n} J(x)
	\label{eq:opt_prob}
\end{align}
for which a closed form expression of $J:\mathbb{R}^n\rightarrow \mathbb{R}$ may be lacking, and only zero-order information in terms of function evaluations are available to find a local minimizer $x^*\in\mathbb{R}^n$ of $J$. The algorithms we propose are of the form 
\begin{align}
	x_{k+1} = \mapA{M}{k}{x_k,J(x_k)},\quad k\ge 0,
	\label{eq:algo}
\end{align} 
where we call $\map{M}{k}:\mathbb{R}^n \times \mathbb{R} \rightarrow \mathbb{R}^n$
 the \textit{transition map}  and $h \in \mathbb{R}_{>0}$ is the step size.
The main idea is to design the transition maps in such a way that for every $k\in\mathbb{N}$, the $m$-fold composition of these maps, i.e., 
\begin{align}
	x_{k+m} = \left( \map{M}{k+m-1} \circ \cdots \circ \map{M}{k}  \right) (x_k,J(x_k))
	\label{eq:composition}
\end{align}
approximates a gradient descent step, i.e.,
\begin{align}
	x_{k+m} = x_k - h\nabla J(x_k) + \mathcal{O}(h^{3/2})
	\label{eq:grad_descent}
\end{align}
as visualized in \Cref{fig1:macu_motivation}.
\begin{figure}[t]
\centering
\begin{tikzpicture}[>=latex,scale = 1.0]
  \pgfdeclarelayer{pre main}
  \pgfsetlayers{pre main,main}
        \def\xa{-1.5};
        \def\ya{5.5};
        \def\xb{2};
        \def\yb{4.5};
        \def\xc{2.5};
        \def\yc{1.9};
        \def\xd{1};
        \def\yd{1.5};
        \def\xe{-1.5};
        \def\ye{3.2};
        \def\sf{12}
        \def\os{3.5}
        \def\ds{-2}
  \begin{axis}[
      hide axis,
      width = 9.5cm,
      domain y = -4:6,
      zmax   = 8,
      colormap/bone,
      xmin = -6,
      xmax = 6,
      ymin = -3,
      ymax = 6
    ]
    \begin{pgfonlayer}{pre main}
        \draw[->, black!99,thick] (axis cs:-5.9,-3,\ds) to (axis cs:4,-3,\ds) {};
        \draw[->, black!99,thick] (axis cs:-5.9,-3,\ds) to (axis cs:-5.9,4.5,\ds) {};
        \draw[->, black!99,thick] (axis cs:-5.9,-3,\ds) to (axis cs:-5.9,-3,9) {};
      \addplot3 [surf, opacity=0.6] {(x^2+y^2)/\sf+\os};
      \addplot3 [surf, opacity=0.03] {\ds+0.9};
      \draw[blue!50,dashed,thick] (axis cs:\xa,\ya,\xa^2/\sf+\ya^2/\sf+\os) to (axis cs:\xa,\ya,\ds) {};
      \draw[blue!50,dashed,thick] (axis cs:\xb,\yb,\xb^2/\sf+\yb^2/\sf+\os) to (axis cs:\xb,\yb,\ds) {};
      \draw[blue!50,dashed,thick] (axis cs:\xc,\yc,\xc^2/\sf+\yc^2/\sf+\os) to (axis cs:\xc,\yc,\ds) {};
      \draw[blue!50,dashed,thick] (axis cs:\xd,\yd,\xd^2/\sf+\yd^2/\sf+\os) to (axis cs:\xd,\yd,\ds) {};
      \draw[blue!50,dashed,thick] (axis cs:\xe,\ye,\xe^2/\sf+\ye^2/\sf+\os) to (axis cs:\xe,\ye,\ds) {};
      \addplot3[mark=square*,blue!90] coordinates {(\xa,\ya,\ds)};
      \addplot3[mark=square*,blue!90] coordinates {(\xb,\yb,\ds)};
      \addplot3[mark=square*,blue!90] coordinates {(\xc,\yc,\ds)};
      \addplot3[mark=square*,blue!90] coordinates {(\xd,\yd,\ds-0.1)};
      \addplot3[mark=square*,blue!90] coordinates {(\xe,\ye,\ds)};
    \end{pgfonlayer}
    \begin{pgfonlayer}{main}
        \addplot3[mark=square*,blue] coordinates {(\xa,\ya,\xa^2/\sf+\ya^2/\sf+\os)};
        \addplot3[mark=square*,blue] coordinates {(\xb,\yb,\xb^2/\sf+\yb^2/\sf+\os)};
        \addplot3[mark=square*,blue] coordinates {(\xc,\yc,\xc^2/\sf+\yc^2/\sf+\os)};
        \addplot3[mark=square*,blue] coordinates {(\xd,\yd,\xd^2/\sf+\yd^2/\sf+\os)};
        \addplot3[mark=square*,blue] coordinates {(\xe,\ye,\xe^2/\sf+\ye^2/\sf+\os)};
        %
        %
        \draw[->, very thick,green60,shorten >= 4pt, shorten <= 4pt] (axis cs:\xa,\ya,\ds) [out=0, in=130] to (axis cs:\xb,\yb,\ds) {};
        \draw[-, very thick,black!80,opacity=0.3,shorten >= 5pt, shorten <= 5pt] (axis cs:\xa,\ya,\xa^2/\sf+\ya^2/\sf+\os) [out=315, in=180] to (axis cs:\xb,\yb,\xb^2/\sf+\yb^2/\sf+\os) {};
        %
        \draw[->, very thick,green60,shorten >= 4pt, shorten <= 5pt] (axis cs:\xb,\yb,\ds) [out=270, in=25] to (axis cs:\xc,\yc,\ds) {};
        \draw[-, very thick,black!80,opacity=0.3, shorten >= 5pt, shorten <= 5pt] (axis cs:\xb,\yb,\xb^2/\sf+\yb^2/\sf+\os) [out=260, in=40] to (axis cs:\xc,\yc,\xc^2/\sf+\yc^2/\sf+\os) {};
        %
        \draw[->, very thick,green60, shorten >= 4pt, shorten <= 4pt] (axis cs:\xc,\yc,\ds) [out=180, in=0] to (axis cs:\xd,\yd,\ds-0.1) {};
        \draw[-, very thick,black!80,opacity=0.3, shorten >= 4pt, shorten <= 4pt] (axis cs:\xc,\yc,\xc^2/\sf+\yc^2/\sf+\os) [out=210, in=0] to (axis cs:\xd,\yd,\xd^2/\sf+\yd^2/\sf+\os) {};
        %
        \draw[->, very thick,green60, shorten >= 4pt, shorten <= 4pt] (axis cs:\xd,\yd,\ds-0.1) [out=170, in=280] to (axis cs:\xe,\ye,\ds) {};
        \draw[-, very thick,black!80,opacity=0.3, shorten >= 4pt, shorten <= 4pt] (axis cs:\xd,\yd,\xd^2/\sf+\yd^2/\sf+\os) [out=150, in=280] to (axis cs:\xe,\ye,\xe^2/\sf+\ye^2/\sf+\os) {};
        %
        \draw[->,very thick,orange90, shorten >= 4pt, shorten <= 4pt] (axis cs:\xa,\ya,\ds) to (axis cs:\xe,\ye,\ds) {};
        \node[] at (axis cs:-3.0,4,\ds) {\color{orange90} ${-\nabla J}$};
    \end{pgfonlayer}
  \end{axis}
\end{tikzpicture}
	\caption[]{
	An illustration of the presented optimization algorithms based on non-commutative maps. Effects of non-commutativity are utilized to approximate the negative gradient of the optimization objective in $m$ steps (i.e., $m=4$ in this illustration).}
	\label{fig1:macu_motivation}
\end{figure}
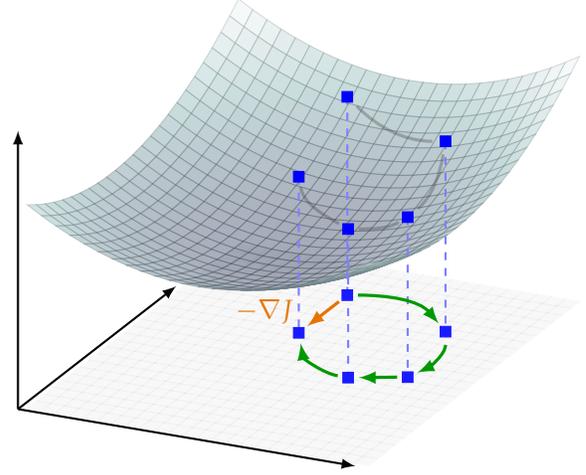
Hereby, we impose the following  structure for the transition maps
\begin{align}
\begin{split}
&\mapA{M}{k}{x_k,J(x_k)} = x_k + \sqrt{h}\alpha_1 s_k\Big(J(x_k)\Big)  
\\
&~~~~~~~~~~~~~~~~\,+ \sqrt{h}\alpha_2 s_k\Big(J\big(x_k+\sqrt{h}s_k(J(x_k))\big)\Big) \\
&s_\ell (J(x_k)) = f(J(x_k))u_\ell+g(J(x_k))v_\ell
\end{split}
\label{eq:TM}
\end{align}
with parameters $\alpha_1,\alpha_2 \in \mathbb{R}$
where $\alpha_1 + \alpha_2 \not = 0$.
We call $s_\ell:\mathbb{R}\rightarrow \mathbb{R}^n$ the \textit{evaluation map}, $f,g:\mathbb{R}\rightarrow \mathbb{R}$ the \textit{generating functions} and $u_\ell,v_\ell\in\mathbb{R}^n$ the $m$-periodic \textit{exploration sequences}. Note that for $\alpha_2 \neq 0$, only two evaluations of $J$ per iterations are necessary. We elaborate on the choice of this structure for the algorithm in the next section.
The main goal of this work is to characterize and design
\begin{enumerate}[leftmargin=13pt]
	\item $m$-periodic exploration sequences $u_\ell$, $v_\ell$, and
	\item gradient generating functions $f$ and $g$,
\end{enumerate}
 such that \eqref{eq:algo} with transition map \eqref{eq:TM} yields \eqref{eq:grad_descent}.

We will at various points make the use of one or both of the following assumptions.
\begin{enumerate}[leftmargin=2.5em, label=(A\arabic*)]
	\item The functions $f,g$ are of class $C^2(\mathbb{R},\mathbb{R})$ and the objective function $J$ is of class $C^2(\mathbb{R}^n,\mathbb{R})$.
	\label{assump:smooth_fg}
	\item The objective function $J$ is radially unbounded and there exists an $x^* \in \mathbb{R}^n$ such that $\nabla J (x)^\top (x-x^*) > 0$ for all $x\in \mathbb{R}^n \backslash \{x^*\}$.
	\label{assump:smooth_J} 
\end{enumerate} 
We note that \ref{assump:smooth_J} will not be required for the design of the algorithms, but only when we analyze their performance.
The implementation of the algorithms, however,  is not limited to the class of objective functions satisfying \ref{assump:smooth_fg}.
%
%
\subsection{Related Results}
\label{sec:existing_results}
The structure of the transition map in \eqref{eq:TM}  was introduced by the authors of the present work in \cite{jfDerFree} for one dimensional problems. Therein,  two cases were considered, specified by the parameter setting $[\alpha_1\ \alpha_2] = [1 \ 0]$, as so-called \textit{single-point} map 
\begin{align}
	\!\mapA{M}{k}{x_k,J(x_k)}\! = \mapA{E}{k}{x_k,J(x_k)}\! := x_k + \sqrt{h}s_k(J(x_k))
	\label{eq:map_E}
\end{align}
and by $[\alpha_1 \ \alpha_2] = [\nicefrac{1}{2}\ \nicefrac{1}{2}]$, as so-called \textit{two-point} map 
\begin{align}
\mapA{M}{k}{x_k,J(x_k)} &= \mapA{H}{k}{x_k,J(x_k)} := x_k + \frac{\sqrt{h}}{2}\left[s_k\Big(J(x_k)\Big) \right. \nonumber \\
 &\left.~+s_k\Big( J(x_k+\sqrt{h}s_k\big(J(x_k)\big) \Big)\right].
 \label{eq:map_H}
\end{align}
The algorithms relying on transition maps \eqref{eq:map_E} and \eqref{eq:map_H} are called single and two-point algorithm, respectively, reflecting that  the number of function evaluations of $J$ at each iteration is one and two. This type of map structure was inspired by the well-known Euler and Heun (trapezoidal) numerical integration methods respectively (thus the naming of the maps $E$ and $H$), i.e., executing a single integration step with step size $\sqrt{h}$ of 
the differential equation
\begin{align}
	\dot{x}(t) = s\big(J(x(t))\big) = f\big(J(x(t))\big)u(t) + g\big(J(x)\big)v(t)
	\label{eq:ode}
\end{align}
with piece-wise constant $m\sqrt{h}$-periodic inputs $u(t),v(t)\in\mathbb{R}^n$ for $t\in[\ell\sqrt{h},(\ell+1)\sqrt{h}]$ with $\ell \in \mathbb{N}$, yields \eqref{eq:map_E} and \eqref{eq:map_H}, respectively.
Note that \eqref{eq:ode} is well known 
as an approximate gradient descent flow in the context of extremum seeking control (cf.~\cite{durr2013lie}). 
For a detailed explanation of the proposed class of algorithms and the continuous-time algorithm \eqref{eq:ode} plus how non-commutativity comes into play, we refer to \cite{jfDerFree,jfESNCM}.
For the coordinate-wise descent case (see Lemma~1 and Lemma~2 in \cite{jfDerFree})  the choice of exploration sequences 
\begin{align}
\label{eq:input_coord}
\begin{split}
	&u_\ell = \bar{u}_\ell e_i,\ \ v_\ell = \bar{v}_\ell e_i \ \ \text{with} \ \
	i = \corrected{\lfloor\ell/4\rfloor\,\text{mod}(n)+1}
	\\ 
	&\bar{u}_\ell = \begin{cases}
	~~~1 &\ell=0 \\
	~~~0 &\ell=1 \\
	-1 &\ell=2 \\
	~~~0 &\ell=3 \\
	\bar{u}_{\ell-4} & \text{else}
	\end{cases}, \quad 
	\bar{v}_\ell = \begin{cases}
	~~~0 &\ell=0 \\
	~~~1 &\ell=1 \\
	~~~0 &\ell=2 \\
	-1 &\ell=3 \\
	\bar{v}_{\ell-4} & \text{else}
	\end{cases}
	\end{split}
\end{align}
with $m=4n$ leads to the evolution of $x_k$ with $[\alpha_1\ \alpha_2]~=~[1\ 0]$ such that
\begin{align}
	x_{k+m} &= x_k + h\Big\lbrace([f,g](J(x_k))  \nonumber \\
	 & - \frac{1}{2}\frac{\partial (f^2+g^2)}{\partial J}(J(x_k)) \Big\rbrace \nabla J(x_k) + \mathcal{O}(h^{3/2})
	 \label{eq:taylor_E}
\end{align}
and with $[\alpha_1\ \alpha_2] = [\nicefrac{1}{2}\ \nicefrac{1}{2}]$ such that
\begin{align}
x_{k+m} &= x_k + h\Big\lbrace([f,g](J(x_k)) \Big\rbrace \nabla J(x_k) + \mathcal{O}(h^{3/2}),
\label{eq:taylor_H}
\end{align}
where $[f,g] := \frac{\partial g}{\partial J}f - \frac{\partial f}{\partial J}g$ is the \textit{Lie bracket} of $f$ and $g$. A simple calculation shows that the term in brackets in \eqref{eq:taylor_E} and \eqref{eq:taylor_H} is identical to $-1$ for $f(J(x)) = \sin(J(x))$ and $g(J(x)) = \cos(J(x))$, hence \eqref{eq:grad_descent} is recovered.
The  exploration sequence above is constructed in such a way that components of the gradient are approximated sequentially for the multi-dimensional setting, hence, coordinate-wise.
In \Cref{fig:ncm}, the exploration sequence and the gradient approximation is visualized
for the scalar case $x_k \in \mathbb{R}$ ($n=1, m=4$).
In summary, the existing procedure is limited and mimics a coordinate-wise descent algorithm.
Further, only a single exploration sequence was presented as well as a single pair of generating functions. 
There are, however, many ways to construct exploration sequences and generating functions, especially in the multi-variable case.
Since different exploration sequences and generating functions lead to different 
properties of the algorithm, it is the goal of this work to provide solutions for a flexible design and constructions of exploration sequences in the multi-variable setting and to characterize a large class of generating functions.
\begin{figure}[t]
	\centering
	\begin{tikzpicture}[>=latex]
	\node[draw=black,circle,name=c1, inner sep = 2pt] at (-0.6,0) {};
	\node[draw=black,circle,name=c2, inner sep = 2pt, above right = 1cm and 1cm of c1.center ] {};
	\node[draw=black,circle,name=c3, inner sep = 2pt,above left = 1cm and 1cm of c2.center ] {};
	\node[draw=black,circle,name=c4, inner sep = 2pt, below left = 0.8cm and 1.1cm of c3.center ] {};
	\node[draw=black,circle,name=c5, inner sep = 2pt, above left = 0.3cm and 0.6cm of c1.center ] {};
	%
	%
	\draw[->,thick,color = green60] (c1) to (c2);
	\draw [->,thick,color = green60] (c2) to [out=100,in=350,looseness=0.8] (c3);
	\draw [->,thick,color = green60] (c3) to (c4);
	\draw [->,thick,color = green60] (c4) to [out=330,in=100,looseness=0.8] (c5);
	\draw [->,thick,color = orange90] (c1) to (c5);

	\node[below = -0.01cm of c1.south ] {\small{$x_0$}};
	\node[right = -0.05cm of c2.east ] {\small{$x_1$}};
	\node[above = -0.05cm of c3.north ] {\small{$x_2$}};
	\node[left = -0.1cm of c4.west ] {\small{$x_3$}};
	\node[left = -0.05cm of c5.west ] {\small{$x_4$}};
	\node[below = 0.7cm of c1.north] {(b)};
	\node[below left = 0.7cm and 3.8cm of c1.north] {(a)};
	
	\node[above right = -0.02cm and 0.45cm of c1.center ] {\small{$M_{0}^{\sqrt{h}}$}};
	\node[above left = 0.45cm and -0.5cm of c2.center ] {\small{$M_{1}^{\sqrt{h}}$}};
	\node[below left = -0.3cm and 0.5cm of c3.center ] {\small{$M_2^{\sqrt{h}}$}};
	\node[below right = -0.08cm and 0.4cm of c4.center ] {\small{$M_{3}^{\sqrt{h}}$}};
	\node[below left = -0.11cm and 0.25cm of c1.center ] {{\color{orange90}\small{$\approx\nabla J(x_0)$}}};
	
	\node[name=1, inner sep = 0pt] at (-6.25,1) {};
	\node[name=2, right = 3.4cm of 1.center ] {};
	\node[name=3, above = 1.5cm of 1.center ] {};
	\node[name=3_1, below = 1.3cm of 1.center ] {};
	\node[name=4, below = 1.0cm of 1.center ] {};
	\node[name=5, above = 0.9cm of 1.center ] {};
	\node[name=6, right = 0.3cm of 5.center ] {};
	\node[fill = blue, circle, inner sep = 2pt] at (6.center){};
	\node[name=7, right = 0.3cm of 1.center ] {};
	\node[cross = 3pt, thick, draw = red] at (7.center){};
	\node[name=8, right = 0.6cm of 7.center ] {};
	\node[fill = blue, circle, inner sep = 2pt] at (8.center){};
	\node[name=9, above = 0.9cm of 8.center ] {};
	\node[cross = 3pt, thick, draw = red] at (9.center){};
	\node[name=10, right = 0.6cm of 8.center ] {};
	\node[cross = 3pt, thick, draw = red] at (10.center){};
	\node[name=11, below = 0.9cm of 10.center ] {};
	\node[fill = blue, circle, inner sep = 2pt] at (11.center){};
	\node[name=22, right = 0.6cm of 10.center ] {};
	\node[fill = blue, circle, inner sep = 2pt] at (22.center){};
	\node[name=23, below = 0.9cm of 22.center ] {};
	\node[cross = 3pt, thick, draw = red] at (23.center){};
	
	\node[name=12, right = 0.3cm of 1.center ] {};
	\node[name=13, below left = 0.5cm and 0.5cm of 12.center ] {};
	\node[name=14, right = 0.6cm of 13.center ] {};
	\node[name=15, above right = 0.5cm and 0.5cm of 14.center ] {};
	\node[name=16, right = 0.6cm of 15.center ] {};
	\node[name=17, above right = 0.5cm and 0.5 of 16.center ] {};
	\node[name=18, right = 0.6cm of 17.center ] {};
	\node[name=19, below left = 0.5cm and 0.5cm of 18.center ] {};
	
	\node[name=20, below left = 0.7cm and 0.7cm  of 1.center ] {};
	\node[name=21, above right = 0.7cm and 0.7cm  of 1.center ] {};
	
	%
	\draw[->] (1.center) to (2);
	\draw[->] (1.center) to (3);
	\draw[-] (1.center) to (3_1);	
	\draw[thick,color=blue] (7.center) to (6.center);
	\draw[thick,color=red] (8.center) to (9.center);
	\draw[thick,color=blue] (10.center) to (11.center);
	\draw[thick,color=red] (22.center) to (23.center);
	
	\node[below left = -0.05cm and -0.15cm of 3.west ] {\small{$u_k,v_k$}};
	\node[below = -0.05cm of 7.south ] {\footnotesize{$0$}};
	\node[below = -0.05cm of 8.south ] {\footnotesize{$1$}};
	\node[below = -0.35cm of 16.north ] {\footnotesize{$2$}};
	\node[below = -0.35cm of 19.north ] {\footnotesize{$3$}};
	\node[below right = -0.1cm and 0.5 cm of 19.south ] {\footnotesize{$k$}};

	\end{tikzpicture}
	\caption[]{(a) periodic inputs $u_k$ ({\color{blue}$\bullet$}) and $v_k$ ({\color{red}$\cross$}) depicted for one period $m=4$ as specified in \eqref{eq:input_coord}; (b) non-commutative maps as in \eqref{eq:TM} with initial point $x_0$.}
	\label{fig:ncm}
\end{figure}
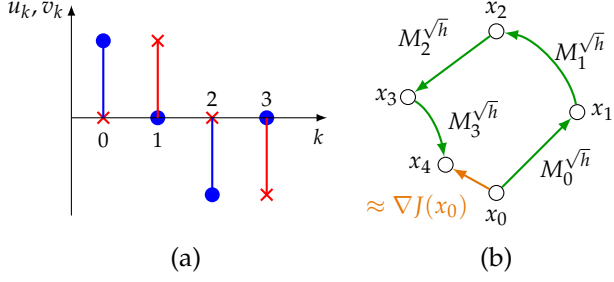
%

\section{MAIN RESULTS}\label{sec:main_part}
\subsection{Problem Statement Reformulation and Convergence}

As described in \cref{sec:gen_prob} we aim to construct $m$-periodic exploration sequences $u_\ell$ and $v_\ell$ and generating functions $f$ and $g$ such that \eqref{eq:algo} with transition map \eqref{eq:TM} yields \eqref{eq:grad_descent}. 
Our first result restates the problem in terms of solving a system of nonlinear equations.
\begin{theorem}
	\label{thm:cond_equ}
     Let \ref{assump:smooth_fg} hold. Then the $m$-th step of the evolution of \eqref{eq:algo} with transition map \eqref{eq:TM} is given by
     \begin{align}
	x_{k+m} &= x_k + \sqrt{h}(\alpha_1+\alpha_2)Y(f(J(x_k)),g(J(x_k)))W\mathds{1}  \nonumber \\
	&+ h \tilde{Y}(f(J(x_k)),g(J(x_k))) T(W) \nonumber \\
	&\times Y(f(J(x_k)),g(J(x_k)))^\top \nabla J(x_k) + \mathcal{O}(h^{3/2}) .
	\label{eq:taylor_matrix}
	\end{align}
	Here, $W = [w_k\ w_{k+1}\ \cdots \ w_{k+m-1}]\in\mathbb{R}^{2n\times m}$ with $w_i = [u_i^\top\ v_i^\top]^\top$ is the exploration sequence matrix and $T(W)\in\mathbb{R}^{2n\times 2n}$
	is given by
	\begin{align}
	&T(W) := \sum_{i=0}^{m-1}  \left( \alpha_2 w_iw_i^\top + (\alpha_1 + \alpha_2)^2 \sum_{j=0}^{i-1} w_iw_j^\top \right).
	\label{eq:cond_equ_gen} 
	\end{align} 
	Furthermore, $Y(f(z),g(z)):= [f(z)I \ \ g(z)I]\in \mathbb{R}^{n \times 2n}$ and $\tilde{Y}(f(z),g(z)) := [\frac{\partial f}{\partial z}(z)I \ \ \frac{\partial g}{\partial z}(z) I]= \frac{\partial}{\partial z}Y(f(z),g(z))\in \mathbb{R}^{n \times 2n}$.
\end{theorem}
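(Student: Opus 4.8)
The plan is to set $\eta := \sqrt{h}$ and carry out a Taylor expansion of the $m$-fold composition \eqref{eq:composition}, retaining all contributions through order $\eta^2 = h$ and absorbing the rest into $\mathcal{O}(h^{3/2})$. First I would expand a single transition map \eqref{eq:TM} about its argument. Writing $s_{k+j}(J(x)) = f(J(x))u_{k+j} + g(J(x))v_{k+j}$ and Taylor-expanding $J$ together with the generating functions $f,g$ (this is exactly where \ref{assump:smooth_fg} is used), the inner evaluation gives $s_{k+j}\bigl(J(x+\eta s_{k+j}(J(x)))\bigr) = s_{k+j}(J(x)) + \eta D_{k+j}\bigl(\nabla J(x)^\top s_{k+j}(J(x))\bigr) + \mathcal{O}(\eta^2)$, where $D_{k+j} := \frac{\partial f}{\partial J}(J(x))u_{k+j} + \frac{\partial g}{\partial J}(J(x))v_{k+j}$. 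Hence a single step reads
\begin{align*}
\map{M}{k+j}(x) = x + \eta(\alpha_1+\alpha_2)s_{k+j}(J(x)) + \eta^2\alpha_2 D_{k+j}\bigl(\nabla J(x)^\top s_{k+j}(J(x))\bigr) + \mathcal{O}(\eta^3).
\end{align*}

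Next I would introduce the intermediate iterates $\xi_0 = x_k, \ldots, \xi_m = x_{k+m}$ with $\xi_{j+1} = \map{M}{k+j}(\xi_j)$, so that the total displacement $\xi_m - \xi_0$ is the telescoping sum of the per-step increments. The key observation is that to evaluate the displacement to order $\eta^2$ it suffices to know each intermediate iterate to first order, namely $\xi_j = x_k + \eta(\alpha_1+\alpha_2)\sum_{i=0}^{j-1}s_{k+i}(J(x_k)) + \mathcal{O}(\eta^2)$. Substituting this first-order expansion into the leading term $\eta(\alpha_1+\alpha_2)s_{k+j}(J(\xi_j))$ of the $j$-th increment and expanding once more produces the off-diagonal cross terms $\eta^2(\alpha_1+\alpha_2)^2 D_{k+j}\bigl(\nabla J(x_k)^\top \sum_{i<j}s_{k+i}(J(x_k))\bigr)$, whereas the explicit second-order part of each step contributes the diagonal terms $\eta^2\alpha_2 D_{k+j}\bigl(\nabla J(x_k)^\top s_{k+j}(J(x_k))\bigr)$. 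After this substitution all quantities are frozen at $x_k$.

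I would then recast everything in matrix form. With $Y = [f(J(x_k))I\ \ g(J(x_k))I]$ and $w_i = [u_i^\top\ v_i^\top]^\top$ one has $s_{k+i}(J(x_k)) = Y w_{k+i}$, so the first-order term collapses to $\eta(\alpha_1+\alpha_2)Y W\mathds{1}$. Likewise $D_{k+j} = \tilde{Y}w_{k+j}$ and the scalar $\nabla J(x_k)^\top s_{k+i}(J(x_k)) = w_{k+i}^\top Y^\top \nabla J(x_k)$, so each second-order summand becomes $\tilde{Y}w_{k+j}w_{k+i}^\top Y^\top \nabla J(x_k)$. Collecting the diagonal pieces (index $i=j$, weight $\alpha_2$) and the strictly-lower pieces (indices $i<j$, weight $(\alpha_1+\alpha_2)^2$) yields precisely $\tilde{Y}\,T(W)\,Y^\top\nabla J(x_k)$ with $T(W)$ as in \eqref{eq:cond_equ_gen}; using $\eta^2 = h$ and $\eta^3 = h^{3/2}$ then gives \eqref{eq:taylor_matrix}.

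The main obstacle is the bookkeeping of the $\eta^2$ terms: one must cleanly separate the \emph{self} contribution of each map's second (trapezoidal) evaluation, which carries the weight $\alpha_2$ and produces the diagonal blocks $w_iw_i^\top$, from the \emph{history} contribution that arises because the $j$-th evaluation map is applied at an iterate already displaced by the preceding $j$ steps, which carries the weight $(\alpha_1+\alpha_2)^2$ and produces the strictly off-diagonal blocks $w_iw_j^\top$ with $i<j$. Reproducing both distinct scalar weights and the strict index inequality in the double sum is the crux of the calculation. A secondary point requiring care is uniformity of the remainder: since $\mathcal{O}(h^{3/2})$ is meant in the sense fixed in the Notation, I would use \ref{assump:smooth_fg} and the continuity of the relevant second derivatives on the compact set to bound all Taylor remainders uniformly in $x_k$, so that summing the $m$ (finitely many, fixed) per-step errors preserves the $\mathcal{O}(h^{3/2})$ order.
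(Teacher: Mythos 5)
Your proposal is correct and follows essentially the same route as the paper: the paper's proof delegates the expansion to Lemma~\ref{thm:taylor_gen}, which establishes exactly your per-step expansion and the separation into the $\alpha_2$-weighted self terms $w_iw_i^\top$ and the $(\alpha_1+\alpha_2)^2$-weighted history terms $w_iw_j^\top$ ($j<i$) by induction on $m$, and then rewrites $s_i=Yw_i$, $\partial s_i/\partial J=\tilde{Y}w_i$ to obtain \eqref{eq:taylor_matrix}. Your telescoping organization (first-order knowledge of the intermediate iterates suffices for the order-$h$ displacement) is just a repackaging of that induction, and your remark on bounding the finitely many Taylor remainders uniformly on compact sets matches the paper's handling via Lemma~\ref{thm:taylor}.
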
 
The proof of \cref{thm:cond_equ} is given in \cref{sec:proof_thm_cond_equ}. 
If there exist $m$-periodic exploration sequences $\{w_\ell\}_{\ell=0}^{m-1}$ (equivalently, an exploration sequence matrix $W$), and generating functions $f$ and $g$ such that 
\begin{align}
	\tilde{Y}(f(z),g(z)) T(W) Y(f(z),g(z))^\top &= -I,\ \forall z \in \mathbb{R}
	\label{eq:problem}
	\\
	 W\mathbb{1} &= 0
	\label{eq:cond_equ_gen_zero}
\end{align}
are satisfied, then \eqref{eq:grad_descent} holds.
Thus, this system of nonlinear ordinary differential equations (w.r.t. $f(z)$ or $g(z)$) with unknown coefficients is key in designing the algorithm.
The idea to solve this highly under-determined system of equations is now to proceed in two steps:
\begin{enumerate}[leftmargin=36pt, label=Step \arabic*)]
	\item For a class of normal (skew-symmetric) matrices $T_d$,
	we construct exploration sequence matrices $W$ such that 
	\eqref{eq:cond_equ_gen_zero} and
	$T(W)=T_d$ hold.
    \label{step1}
	\item We characterize gradient generating functions $f,g$ and normal (skew-symmetric) matrices $T_d$ 	such that \eqref{eq:problem} holds.
	\label{step2}
\end{enumerate}
These two constructions are presented in the following two subsections. We start with a remark on $T(W)$ and the convergence result of the proposed algorithms.
\begin{rem}
    \label{rem:T}
	To get a sense of equation	\eqref{eq:problem} and the role of $T(W)$, partition $T(W)$ as 
	\begin{align}
		T(W) = \begin{bmatrix}
			T_{11}(W) & T_{12}(W) \\ T_{21}(W) & T_{22}(W)
		\end{bmatrix},
		\label{eq:structure_T}
	\end{align}
	with $T_{11}(W),T_{12}(W),T_{21}(W), T_{22}(W) \in \mathbb{R}^{n\times n}$. Note that $T_{11}(W)$ is defined solely by $\{u_\ell\}_{\ell = 0}^{m-1}$, $T_{22}(W)$ solely by $\{v_\ell\}_{\ell = 0}^{m-1}$, and $T_{12}(W)$ and $T_{21}(W)$ by both $\{u_\ell\}_{\ell = 0}^{m-1}$ and $\{v_\ell\}_{\ell = 0}^{m-1}$. Then \eqref{eq:problem} with \eqref{eq:structure_T} yields
	\begin{align}
		\frac{\partial f}{\partial J}fT_{11} + \frac{\partial f}{\partial J}gT_{12} + \frac{\partial g}{\partial J}fT_{21} + \frac{\partial g}{\partial J}gT_{22} = -I
		\label{eq:problem_w_structure}
	\end{align}
	where the arguments $J(x_0)$ of the maps $f$ and $g$ and their derivatives and $W$ of $T_{ij}$ with $i,j\in \{1,2\}$ are omitted for the sake of readability. By plugging $\{u_\ell\}_{\ell = 0}^{m-1}$ and $\{v_\ell\}_{\ell = 0}^{m-1}$ from \eqref{eq:input_coord}, with $W_1$ and $W_2$ for $[\alpha_1\ \alpha_2] = [1\ 0]$ and $[\alpha_1\ \alpha_2] = [\nicefrac{1}{2}\ \nicefrac{1}{2}]$, respectively, into \eqref{eq:cond_equ_gen}, one obtains
	\begin{align}
		T(W_1) = \begin{bmatrix}	-I & -I \\ I & -I \end{bmatrix},
		\ \ \text{and} \ \
		T(W_2) = \begin{bmatrix} 0 & -I \\ I & 0 \end{bmatrix}.
		\label{eq:T_examp}
	\end{align} 
	Hence, the left hand side of \eqref{eq:problem_w_structure} translates into the terms in the curly brackets in \eqref{eq:taylor_E} and \eqref{eq:taylor_H}, respectively. A geometric interpretation of T(W) is discussed in \Cref{sec:numerical_results}.
\end{rem}

Due to 	property \eqref{eq:grad_descent}, for example, semi-global practical asymptotic convergence to the optimizer $x^*$ can be established \correct{(see \cite{moreau2000practical})}:
\begin{theorem}
	\label{thm:conv}
	Let \ref{assump:smooth_fg} and \ref{assump:smooth_J} hold. Assume that there exist generating functions $f(J(x))$ and $g(J(x))$ and an exploration sequence matrix $W$ such that \eqref{eq:problem} and \eqref{eq:cond_equ_gen_zero} are satisfied. Then, for all $\delta_1,\delta_2 \in \mathbb{R}_{>0}$ with $\delta_2 < \delta_1$, there exist an $h^* \in \mathbb{R}_{>0}$ and $N(h) \in \mathbb{N}$, such that for all $h\in \{\bar{h}\ |\ 0 < \bar{h} < h^*\}$ and $x_0 \in \mathcal{U}_{x^*}^{\delta_1}$, it holds $x_k \in \mathcal{U}_{x^*}^{\delta_2}$ for all $k\ge N(h)$.
\end{theorem}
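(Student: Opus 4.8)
The plan is to read off from \Cref{thm:cond_equ} that, under the standing hypotheses, the $m$-step map is a perturbed gradient step, and then to run a Lyapunov argument certifying that this perturbed map inherits the global asymptotic stability of the underlying gradient flow, in the semi-global practical sense of \cite{moreau2000practical}. Concretely, since $f,g,W$ satisfy \eqref{eq:problem} and \eqref{eq:cond_equ_gen_zero}, the expansion \eqref{eq:taylor_matrix} collapses to \eqref{eq:grad_descent}, so writing $\Delta_k := x_{k+m}-x_k = -h\nabla J(x_k) + R(x_k,h)$ with $R = \mathcal{O}(h^{3/2})$ uniformly on compact sets, the block-sampled iteration is a perturbed explicit Euler discretisation (with step $h$) of $\dot x = -\nabla J(x)$. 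First I would certify that this flow is globally asymptotically stable at $x^*$: with the radially unbounded $V(x) := \tfrac12\|x-x^*\|_2^2$, \ref{assump:smooth_J} gives $\dot V = -(x-x^*)^\top\nabla J(x) < 0$ for all $x\neq x^*$ (and forces $\nabla J(x^*)=0$), so $x^*$ is the unique, globally attractive equilibrium.

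Next I would transfer this dissipation to the iteration using the same $V$. Expanding along the block map gives
\begin{align}
V(x_{k+m}) - V(x_k) &= (x_k-x^*)^\top \Delta_k + \tfrac12\|\Delta_k\|_2^2 \nonumber \\
&= -h\,(x_k-x^*)^\top\nabla J(x_k) + \mathcal{O}(h^{3/2}),
\end{align}
where the $\mathcal{O}(h^{3/2})$ collects $(x_k-x^*)^\top R = \mathcal{O}(h^{3/2})$ and $\tfrac12\|\Delta_k\|_2^2 = \mathcal{O}(h^2)$. On any compact annulus $\mathcal{A} = \{\, \rho_- \le \|x-x^*\|_2 \le \rho_+\,\}$ with $\rho_- > 0$, continuity and \ref{assump:smooth_J} yield a uniform lower bound $(x-x^*)^\top\nabla J(x) \ge c_0 > 0$, so there is an $h^*>0$ with $V(x_{k+m}) - V(x_k) \le -\tfrac{h c_0}{2} < 0$ on $\mathcal{A}$ for all $h<h^*$. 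This strict per-block decrease is the engine of the proof: it forces $V$ to ratchet down until the trajectory enters the sublevel set corresponding to a ball of radius $\delta_2/2$, which it reaches in at most $\mathcal{O}(1/h)$ blocks, and thereafter the sphere $\|x-x^*\|_2=\delta_2/2$ acts as a barrier (on it $V$ strictly decreases), so the block-sampled trajectory cannot escape $\mathcal{U}_{x^*}^{\delta_2/2}$.

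The delicate part --- and the main obstacle --- is the two-scale bookkeeping needed to make these $\mathcal{O}$-estimates uniform and to pass from the block boundaries to all indices $k$. The uniform constants require a forward-invariant compact set containing not only the block endpoints but also the intermediate iterates $x_{k+1},\dots,x_{k+m-1}$; since each single application of \eqref{eq:TM} displaces $x$ by $\mathcal{O}(\sqrt h)$ (through the bounded evaluation map $s_\ell$), an entire block stays within an $\mathcal{O}(\sqrt h)$ tube of its starting point. Thus I would first shrink $h^*$ so that this tube keeps every block started in $\mathcal{U}_{x^*}^{\delta_1}$ inside a slightly enlarged compact set, giving the uniform bounds and the forward invariance used above; the same $\mathcal{O}(\sqrt h)$ excursion is what prevents a naive single-step invariance argument. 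Reconciling this $\mathcal{O}(\sqrt h)$ intra-block motion against the $\mathcal{O}(h)$ net descent, while keeping $h^*$ and the entry time independent of $x_0\in\mathcal{U}_{x^*}^{\delta_1}$, is the crux.

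Finally I would assemble the statement. Setting the block-boundary target radius to $\delta_2/2$ as above, choosing $h^*$ small enough that the intra-block excursion is below $\delta_2/2$, and taking $N(h)$ to be the (block-rounded) number of iterations after which the block-sampled trajectory has entered and remains in $\mathcal{U}_{x^*}^{\delta_2/2}$, the $\mathcal{O}(\sqrt h)$ tube bound then places every iterate with $k\ge N(h)$ inside $\mathcal{U}_{x^*}^{\delta_2}$, for every $x_0\in\mathcal{U}_{x^*}^{\delta_1}$ and every $h<h^*$. This is exactly the claimed semi-global practical asymptotic convergence, and the argument is an instance of the general principle in \cite{moreau2000practical}.
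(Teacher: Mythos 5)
Your argument is essentially correct, but note that the paper does not actually write out a proof of \Cref{thm:conv}: it only states that the result follows along the lines of the proof of Theorem~2 in \cite{jfDerFree}, using \Cref{thm:taylor_gen} and \Cref{thm:cond_equ}, and points to \cite{moreau2000practical} for the semi-global practical asymptotic stability framework. Your proposal is a self-contained instantiation of exactly that strategy: collapse the $m$-step composition to the perturbed gradient step \eqref{eq:grad_descent} via \Cref{thm:cond_equ}, then run a practical-stability argument. Two points of comparison. First, your choice $V(x)=\tfrac12\|x-x^*\|_2^2$ pairs directly with \ref{assump:smooth_J} (the cross term is exactly $-h\,\nabla J(x_k)^\top(x_k-x^*)$) and makes the sublevel sets coincide with the balls $\mathcal{U}_{x^*}^{\delta}$ of the statement; as a by-product you never use the radial unboundedness of $J$, only the inner-product condition, so your route is slightly more economical in hypotheses than one based on $V=J-J(x^*)$, which is what the radial unboundedness in \ref{assump:smooth_J} suggests the cited proof employs. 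Second, you correctly isolate the two real technical issues: the $\mathcal{O}(\sqrt h)$ intra-block excursion versus the $\mathcal{O}(h)$ net block displacement, and the circularity between forward invariance and the uniformity of the remainder constants in \Cref{thm:taylor_gen}; fixing an enlarged compact set first and bootstrapping is the right resolution. One sentence is too loose to stand as written: strict decrease of $V$ \emph{on} the sphere $\|x-x^*\|_2=\delta_2/2$ does not by itself make that sphere a barrier for a discrete-time map, since a block could in principle jump across it. The correct invariance argument splits into two cases, namely descent of $V$ on the annulus $\rho_-\le\|x-x^*\|_2\le\delta_2/2$, and the $\mathcal{O}(h)$ bound on $\|x_{k+m}-x_k\|$ from the inner ball $\|x-x^*\|_2<\rho_-$, which prevents overshooting once $h^*$ is small; both ingredients already appear in your proposal, so this is a presentational slip rather than a gap. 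Finally, invoke the expansion \eqref{eq:taylor_matrix} only at indices that are multiples of $m$, since the iterated double sum defining $T(W)$ is not invariant under cyclic shifts of the exploration sequence.
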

The proof of \cref{thm:conv} follows along the lines of the proof of \cite[Theorem 2]{jfDerFree} by utilizing \cref{thm:taylor_gen} in \cref{sec:pre_lem} and \cref{thm:cond_equ}.

\begin{rem}
    \label{rem:var_h}
    \Cref{thm:conv} is based on a constant step size $h$. Applying a variable decreasing step size $h_k$, but constant over a period of length $m$, i.e., $h_0=h_{1}=\cdots=h_{m-1},\ h_{m}=h_{m+1}=\cdots=h_{2m-1},\ \ldots,$ with 
    \begin{align}
        \sum_{p=0}^{\infty} h_{pm} = \infty, \quad  \sum_{p=0}^{\infty} h_{pm}^2 < \infty,\quad 
    \end{align}
    e.g. $h_k = \nicefrac{1}{(\lfloor k/m\rfloor + 1)}$ (cf. Proposition 1 in \cite{spall1992multivariate}) lead to a semi-global asymptotic convergence result and a potential numerical performance improvement. 
    Note that the requirement of periodically $m$ constant steps preserves
    the $\mathcal{O}(\sqrt{h})$-order terms in \eqref{eq:taylor_matrix} (cf. proof of \Cref{thm:taylor_gen}).
\end{rem}

%
\subsection{Exploration Sequences}
In this part we characterize the conditions under which there exists an exploration sequence matrix $W$ for a given $T_d$ such that $T(W) = T_d$ together with $W\mathbb{1}=0$ are satisfied, hence, addressing \ref{step1} as stated above. The next lemma represents
$T(W)$, i.e. \eqref{eq:cond_equ_gen}, in combination with \eqref{eq:cond_equ_gen_zero} in
a more compact form. 
%
%
\begin{lem}
	\label{thm:matrices}
	Consider \eqref{eq:cond_equ_gen} and suppose
	the exploration sequence matrix $W\in \mathbb{R}^{2n\times m}$ satisfies
	\eqref{eq:cond_equ_gen_zero}. Then $T(W)$ can be expressed as
	\begin{align}
	T(W) = W P W^\top ,	\label{eq:w_mat_cond}
	\end{align}
	with $P \in \mathbb{R}^{m\times m}$ defined as
	\begin{align}
	P =
	\begin{bmatrix}
		c_1\! & \! c_2 \! &\! \cdots \!& \!c_2\! &\! 0 \\
		\alpha_2\! & \!\ddots\! & \!\ddots\! & \!\vdots\! & \!\vdots \\
		\vdots\! & \!\ddots\! & \!\ddots & \!c_2\! & \!\vdots  \\
		\alpha_2\! & \!\cdots\! & \!\alpha_2\! & \! c_1\! & \!0 \\
		0\! & \!\cdots\! & \!\cdots\! & \!0\! & \!0
	\end{bmatrix}
	\label{eq:def_mat}
	\end{align}
	where $c_1 = 2\alpha_2 - (\alpha_1+\alpha_2)^2$, $c_2 = \alpha_2-(\alpha_1+\alpha_2)^2$, and $\alpha_1,\alpha_2$ defined in \eqref{eq:TM}.
\end{lem}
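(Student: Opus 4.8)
The plan is to exhibit an explicit matrix $\bar P\in\mathbb{R}^{m\times m}$, read directly off \eqref{eq:cond_equ_gen}, for which the identity $T(W)=W\bar P W^\top$ holds with no assumption on $W$, and then to show that passing from $\bar P$ to the claimed $P$ does not change the product once the constraint \eqref{eq:cond_equ_gen_zero}, $W\mathds{1}=0$, is imposed. Throughout I index the columns of $W=[w_0\ \cdots\ w_{m-1}]$ and the entries of every $m\times m$ matrix from $0$ to $m-1$, so that the zero row and column of $P$ sit at index $m-1$.

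First I would expand both sides as sums of outer products. For any $A\in\mathbb{R}^{m\times m}$ one has $WAW^\top=\sum_{i,j=0}^{m-1}A_{ij}\,w_i w_j^\top$. Comparing with \eqref{eq:cond_equ_gen} shows $T(W)=W\bar P W^\top$, where $\bar P$ carries $\alpha_2$ on its diagonal, $(\alpha_1+\alpha_2)^2$ strictly below the diagonal, and $0$ strictly above it; in particular this $\bar P$ has no distinguished zero last row or column.

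The crucial observation is that $W\mathds{1}=0$ renders the bilinear map $A\mapsto WAW^\top$ insensitive to a two-sided gauge: for any $a,b\in\mathbb{R}^m$ one has $W(\mathds{1}a^\top)W^\top=(W\mathds{1})(a^\top W^\top)=0$ and $W(b\mathds{1}^\top)W^\top=(Wb)(W\mathds{1})^\top=0$. Hence $WPW^\top=W\bar P W^\top$ as soon as the difference $D:=P-\bar P$ is additively separable, $D_{ij}=a_j+b_i$, equivalently $D=\mathds{1}a^\top+b\mathds{1}^\top$.

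It then remains to verify this separability by a direct entrywise computation, which I expect to be routine. Writing $\beta=(\alpha_1+\alpha_2)^2$ and recalling $c_1=2\alpha_2-\beta$, $c_2=\alpha_2-\beta$, on the leading $(m-1)\times(m-1)$ block all three cases collapse to the same constant $\alpha_2-\beta$ (diagonal $c_1-\alpha_2$, strict-lower $\alpha_2-\beta$, strict-upper $c_2-0$); the last column of $D$ vanishes, the last row equals $-\beta$, and the bottom-right entry equals $-\alpha_2$. This is exactly $D_{ij}=a_j+b_i$ with, say, $b_i=\alpha_2-\beta$ and $a_j=0$ for $i,j\le m-2$ together with $a_{m-1}=\beta-\alpha_2$, $b_{m-1}=-\beta$, the corner being consistent since $a_{m-1}+b_{m-1}=-\alpha_2$. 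This yields $T(W)=W\bar P W^\top=WPW^\top$, i.e. \eqref{eq:w_mat_cond}. The only genuinely non-routine step is the middle one: spotting that \eqref{eq:cond_equ_gen_zero} should be exploited as a gauge freedom that lets one normalize the obvious coefficient matrix $\bar P$ into the prescribed $P$ with its zero last row and column; once this viewpoint is adopted, the remaining verification is mechanical.
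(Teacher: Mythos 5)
Your proof is correct, and it takes a genuinely different route from the paper's. The paper proves the identity by eliminating the last column via $w_{m-1}=-\sum_{i=0}^{m-2}w_i$ and then regrouping the resulting double sums until the coefficients $c_1$, $c_2$, $\alpha_2$ of the leading $(m-1)\times(m-1)$ block emerge; the zero last row and column of $P$ are a byproduct of $w_{m-1}$ having been substituted away. You instead keep all $m$ columns, read off the unconditional identity $T(W)=W\bar PW^\top$ with the lower-triangular coefficient matrix $\bar P$ (diagonal $\alpha_2$, strict lower part $(\alpha_1+\alpha_2)^2$, strict upper part $0$), and observe that under $W\mathds{1}=0$ the map $A\mapsto WAW^\top$ kills every matrix of the form $\mathds{1}a^\top+b\mathds{1}^\top$; the entrywise check that $P-\bar P$ is of this form is correct (constant $\alpha_2-(\alpha_1+\alpha_2)^2$ on the leading block, $0$ in the last column, $-(\alpha_1+\alpha_2)^2$ in the last row, $-\alpha_2$ in the corner, with the consistent choice of $a$ and $b$ you give). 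What your argument buys is a structural explanation of the statement: it identifies the full gauge class of matrices $P$ representing the same $T(W)$ on $\ker(\mathds{1}^\top\!\cdot)$, which makes clear why the particular normalization with a zero last row and column is available and is also the mechanism implicitly used later in the paper (cf.\ the passage from $P$ to $\tilde P$ in \eqref{eq:P_tilde}). The paper's substitution argument is more elementary and self-contained but yields only the one representative. Both are complete proofs.
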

The proof of \cref{thm:matrices} is given in \cref{sec:proof_matrices}. 
Consequently, when proceeding according to \ref{step1} and \ref{step2} described in the section above, the key equations for the design of the exploration sequences are %
\begin{align}
	\begin{split}
		WPW^\top &=  T_d \\
		W \allone{} &= 0.
	\end{split}
    \label{eq:svd_prob}
\end{align}
The following theorem, which provides a constructive design of the exploration sequence matrix $W$, is of central importance. It also provides structural insight in terms of obtaining lower bounds on the length (period) of the exploration sequence $m$, suitable choices of the parameters $\alpha_1$ and $\alpha_2$ and admissible structures for 
the desired target matrices $T_d$. 
%
%
\begin{theorem}
	\label{thm:existence_svd}
	Given $\alpha_1, \alpha_2$ and $T_d \in \mathbb{R}^{2n\times 2n}$.
	Suppose that either $T_d$ is normal, $(2\alpha_2-(\alpha_1+\alpha_2)^2) (T_d+T_d^\top)$ positive definite, and \cref{property:interlacing} (see below) is satisfied or that  $T_d$ is skew-symmetric with $2\alpha_2-(\alpha_1+\alpha_2)^2 = 0$. 
	Then there exists a $m\ge \text{rk}(T_d)+1$ such that $W\in\mathbb{R}^{2n\times m}$ satisfies the system of equations \eqref{eq:svd_prob}.
\end{theorem}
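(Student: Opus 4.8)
The plan is to exploit the fact that the last row and column of $P$ vanish in order to reduce \eqref{eq:svd_prob} to a single congruence-type equation, and then to decouple it into a symmetric and a skew-symmetric part. First I would write $\bar W := [w_0\ \cdots\ w_{m-2}]$ for the first $m-1$ columns of $W$ and $\bar P\in\mathbb{R}^{(m-1)\times(m-1)}$ for the leading principal block of $P$; since $P$ has a zero last row and column, $WPW^\top = \bar W\bar P\bar W^\top$ regardless of $w_{m-1}$. Hence I can devote the final column entirely to the constraint by setting $w_{m-1} = -\bar W\mathds{1}$, which enforces $W\mathds{1}=0$ and is exactly what accounts for the $+1$ in $m\ge\mathrm{rk}(T_d)+1$. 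The task thus collapses to finding $\bar W\in\mathbb{R}^{2n\times(m-1)}$ with $\bar W\bar P\bar W^\top = T_d$. Setting $d := 2\alpha_2-(\alpha_1+\alpha_2)^2$ and $\beta := (\alpha_1+\alpha_2)^2>0$, a short computation ($c_1=d$, $\tfrac12(c_2+\alpha_2)=\tfrac d2$, $\tfrac12(c_2-\alpha_2)=-\tfrac\beta2$) gives the decomposition $\bar P = \tfrac d2(I+\mathds{1}\mathds{1}^\top)-\tfrac\beta2 S$, where $S_{ij}=\operatorname{sign}(j-i)$ is the skew ``all-ones'' matrix. Equating symmetric and skew parts of $\bar W\bar P\bar W^\top=T_d$ then produces the two equations $\tfrac d2\,\bar W(I+\mathds{1}\mathds{1}^\top)\bar W^\top=\tfrac12(T_d+T_d^\top)$ and $-\tfrac\beta2\,\bar W S\bar W^\top=\tfrac12(T_d-T_d^\top)$.

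In the skew-symmetric case ($d=0$, $T_d$ skew) only the second equation remains, namely $\bar W S\bar W^\top=-\tfrac2\beta T_d$. Here I would bring $S$ to real skew canonical form $S=V\Sigma_S V^\top$ with $V$ orthogonal and $\Sigma_S$ block-diagonal with $2\times2$ blocks, and then take $\bar W=MV^\top$, which reduces the equation to $M\Sigma_S M^\top=-\tfrac2\beta T_d$. I can satisfy this by aligning consecutive pairs of columns of $M$ with the canonical $2\times2$ blocks of $T_d$, scaled to match the (nonzero) frequencies of $\Sigma_S$ and with signs chosen by swapping columns when needed. Choosing $m-1$ even and at least $\mathrm{rk}(T_d)$ makes $S$ nonsingular on the relevant blocks, and taking $m-1=\mathrm{rk}(T_d)$ yields exactly $m=\mathrm{rk}(T_d)+1$.

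In the normal case I would use the hypothesis that $G:=\tfrac1d(T_d+T_d^\top)$ is positive definite (equivalent to $d(T_d+T_d^\top)\succ0$, which also forces $\mathrm{rk}(T_d)=2n$) to make the substitution $\bar W=G^{1/2}R(I+\mathds{1}\mathds{1}^\top)^{-1/2}$. The symmetric equation then becomes simply $RR^\top=I_{2n}$, i.e. $R\in\mathbb{R}^{2n\times(m-1)}$ must have orthonormal rows, which forces $m-1\ge 2n$. Substituting the same $\bar W$ into the skew equation reduces it to $R\tilde S R^\top=\tilde K$, where $\tilde S:=(I+\mathds{1}\mathds{1}^\top)^{-1/2}S(I+\mathds{1}\mathds{1}^\top)^{-1/2}$ and $\tilde K:=-\tfrac1\beta G^{-1/2}(T_d-T_d^\top)G^{-1/2}$ are both skew-symmetric. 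This is precisely the problem of realizing the skew matrix $\tilde K$ as the compression of the skew matrix $\tilde S$ onto a $2n$-dimensional subspace, whose solvability is governed by an interlacing condition between their canonical frequencies; \cref{property:interlacing} is exactly this condition, and enlarging $m$ enriches and spreads the spectrum of $\tilde S$ so that the interlacing can be met. Assembling $\bar W$ and appending $w_{m-1}=-\bar W\mathds{1}$ then produces the required $W$.

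The main obstacle is this final reduction step in the normal case: guaranteeing the existence of an orthonormal-rows $R$ with $R\tilde S R^\top=\tilde K$. This skew-symmetric compression statement is the technical heart of the theorem and is exactly what \cref{property:interlacing} is assumed to supply; the remaining work is the bookkeeping of the $G^{1/2}$ normalization and the correct identification of $\tilde S$ and $\tilde K$. By contrast, the sign matching in the canonical-form constructions (both in the skew case and in the pairing used to build $R$) is routine but must be carried out carefully to avoid spurious sign obstructions.
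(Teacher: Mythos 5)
Your argument is correct in substance but takes a genuinely different route from the paper's. The paper posits the explicit SVD ansatz $W=U\Sigma V^\top$, with $U$ built from the eigenvectors of $T_d$ and $V$ engineered so that its last column is $m^{-1/2}\mathds{1}$, and then derives the coupled conditions \eqref{pf:2_cond1}--\eqref{pf:2_cond2} on the singular values and the compression frequencies $\hat\omega_k$. You instead eliminate the constraint $W\mathds{1}=0$ at the outset by sacrificing the last column, and your decomposition $\bar P=\tfrac d2(I+\mathds{1}\mathds{1}^\top)-\tfrac\beta2 S$ (which I have checked against \eqref{eq:def_mat}) decouples the problem into a symmetric equation, solved in closed form by the $G^{1/2}$ substitution, and a single skew compression equation $R\tilde S R^\top=\tilde K$; the frequencies of your $\tilde K$ agree, up to the factor $2/\beta$, with the $\hat\omega_k=\mu\delta_k/\gamma_k$ the paper extracts from $X=U^\top T_dU$, and your skew case reproduces $m=\mathrm{rk}(T_d)+1$ exactly as in \cref{thm:W_sym}. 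Both arguments ultimately stand on the same two legs: the Fan--Pall compression lemma (\cref{thm:fan_pall_mult}) for fixed $m$, and \cref{property:interlacing} to guarantee that enlarging $m$ eventually places the required frequencies inside the admissible interlacing windows. What the paper's route buys is an implementable construction (the step-by-step procedure in the appendix falls out of the ansatz); what yours buys is conceptual transparency and a cleaner accounting of the $+1$ in $m\ge\mathrm{rk}(T_d)+1$.

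One identification you must make explicit before invoking the conjecture: \cref{property:interlacing} is stated for the specific matrix $C(m)$ of \eqref{eq:pm}, not for your $\tilde S=(I+\mathds{1}\mathds{1}^\top)^{-1/2}S(I+\mathds{1}\mathds{1}^\top)^{-1/2}$. These do have the same frequencies --- both are compressions of the same $m\times m$ padded sign matrix (the skew part of $P$, rescaled) onto the hyperplane $\mathds{1}^\perp$, expressed in two different orthonormal bases: note that $EE^\top=I+\mathds{1}\mathds{1}^\top$ for $E=[\,I\ \ {-\mathds{1}}\,]$, so $(I+\mathds{1}\mathds{1}^\top)^{-1/2}E$ has orthonormal rows spanning $\mathds{1}^\perp$, just as the first $m-1$ columns of the paper's $V$ in \eqref{pf:construct_V} do. Hence $\tilde S$ is orthogonally similar to $-C(m-1)$ and the conjectured interlacing does govern its spectrum, but without this observation your appeal to \cref{property:interlacing} is not literally licensed. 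With that supplied, the remaining steps (Pfaffian nonsingularity of the even-order sign matrix in the skew case, and post-composition with an orthogonal $O$ to turn a principal submatrix with the right frequencies into $\tilde K$ itself) are routine and sound.
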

%
%

The role of \Cref{property:interlacing} in \Cref{thm:existence_svd} is discussed in \Cref{rem:pm}.
\begin{property}
\label{property:interlacing}
Let the skew-symmetric matrix $C(m) \in \mathbb{R}^{m\times m}$ be defined as 
\begin{align}
\begin{split}
    C(m) &:= A(m) + \epsilon(m+1) B(m)
\end{split}
\label{eq:pm}
\end{align}
\begin{align}
    A(m) &:= 
    \begin{bmatrix}
    0 & 1 & \cdots & 1 \\
    -1 & \ddots & \ddots & \vdots \\
    \vdots & \ddots & \ddots & 1 \\
    -1 & \cdots & \!\!\!\!-1 & 0
    \end{bmatrix} \in \mathbb{R}^{m \times m}, \\
	\begin{split}
	    B(m) &:=
	    \allone{} [0~2~4~\ldots~2(m-1)]\\
	    &-(\allone{} [0~2~4~\ldots~2(m-1)])^T\in \mathbb{R}^{m \times m} 
	\end{split}    
\end{align}
with 
$\epsilon(m) = (m-1)^{-1}(1-m^{-1/2})$.
Then for any $m \ge 2$, $C(m)$ and $C(m+1)$ satisfy the eigenvalue interlacing property (cf. \Cref{thm:fan_pall} in \Cref{sec:pre_lem}). 
\begin{align}
    \omega_{k}^{m+1}>\omega_k^m > \omega_{k+1}^{m+1}\ge 0,
    \label{eq:interlacing}
\end{align}
for $k = 1,\ldots,\lfloor m/2\rfloor$, where $\pm \omega_k^m i$ are the eigenvalues of $C(m)$, with $\{\omega^m_k\}_{k=1\ldots m}$ sorted in non-decreasing order \corrected{in $k$ for $m$ fixed}.
\end{property}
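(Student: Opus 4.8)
The plan is to reduce \eqref{eq:interlacing} to a Cauchy-type eigenvalue interlacing and then to control the single genuine obstruction, namely that $C(m)$ is not literally a principal submatrix of $C(m+1)$. Writing $\mathrm{i}$ for the imaginary unit, observe first that $\mathrm{i}C(m)$ is Hermitian (because $C(m)$ is real and skew-symmetric), so its spectrum is real and equals $\{\pm\omega_k^m\}$ together with a zero eigenvalue when $m$ is odd. With the magnitudes ordered \emph{decreasingly} in $k$ (which is what makes \eqref{eq:interlacing} a consistent strict interlacing), the claim is exactly that the real spectrum of $\mathrm{i}C(m)$ strictly interlaces that of $\mathrm{i}C(m+1)$ in the sense of \Cref{thm:fan_pall}, with the larger matrix carrying the extreme eigenvalues. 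I would therefore phrase everything through the Hermitian family $H(m,t):=\mathrm{i}\bigl(A(m)+tB(m)\bigr)$, so that $\mathrm{i}C(m)=H(m,\epsilon(m+1))$ and $\mathrm{i}C(m+1)=H(m+1,\epsilon(m+2))$.

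For every \emph{fixed} $t$, both $A$ and $B$ are defined entrywise by $\mathrm{sign}(j-i)$ and $2(j-i)$ independently of the size, so $A(m)+tB(m)$ is the leading $m\times m$ block of $A(m+1)+tB(m+1)$; hence $H(m,t)$ is a leading principal submatrix of $H(m+1,t)$, and \Cref{thm:fan_pall} yields the interlacing of their spectra at common $t$. Strictness at fixed $t$ I would extract from the unreduced skew-Toeplitz structure: the bordering vector (last column of $H(m+1,t)$) has nonzero projection onto each eigenvector of $H(m,t)$, and the three-term recurrence derived below forces the eigenvalues of $H(m,t)$ to be simple, so equality in the interlacing is excluded.

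The real difficulty is that $C(m)$ carries the coefficient $\epsilon(m+1)$ whereas $C(m+1)$ carries $\epsilon(m+2)\neq\epsilon(m+1)$: thus $\mathrm{i}C(m)=H(m,\epsilon(m+1))$ is a submatrix of $H(m+1,\epsilon(m+1))$ but \emph{not} of $\mathrm{i}C(m+1)=H(m+1,\epsilon(m+2))$. These two $(m+1)\times(m+1)$ matrices differ by $(\epsilon(m+1)-\epsilon(m+2))\,\mathrm{i}B(m+1)$, and since $B(m+1)=\mathds{1}r^\top-r\mathds{1}^\top$ with $r=[0\ 2\ \cdots\ 2m]^\top$ has rank two, $\mathrm{i}B(m+1)$ is Hermitian of inertia $(1,1)$; a signature-$(1,1)$ perturbation moves each eigenvalue past at most one neighbour, which matches the one-index shift of the Cauchy step. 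The trap is quantitative: $\|B(m+1)\|=\Theta(m^2)$ while $\epsilon(m+1)-\epsilon(m+2)=\Theta(m^{-2})$, so the perturbation has norm $\Theta(1)$; a Weyl bound is therefore useless, and the eigenvalue drift is of exactly the same order as the Cauchy gaps. This is where the precise form $\epsilon(m)=(m-1)^{-1}(1-m^{-1/2})$ must enter, and it is the main obstacle. One route is to chain the submatrix step $H(m,\epsilon(m+1))\subset H(m+1,\epsilon(m+1))$ with the rank-$(1,1)$ comparison along the homotopy $t\mapsto H(m+1,t)$, tracking the two moving eigenvalues via $\tfrac{d}{dt}\omega_k=\langle\psi_k,\mathrm{i}B(m+1)\psi_k\rangle$ and showing that no $\omega_k^{m+1}$ crosses the bracketing $\omega^m$ eigenvalues as $t$ runs from $\epsilon(m+1)$ to $\epsilon(m+2)$.

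The route I expect to be most decisive works directly with the characteristic equation. Using $(C(m)v)_{i+1}-(C(m)v)_i=-(v_i+v_{i+1})-2t\,\mathds{1}^\top v$, the interior eigenvalue equations collapse to the inhomogeneous recurrence $(\lambda+1)v_{i+1}=(\lambda-1)v_i-2t\,\mathds{1}^\top v$, solved by $v_i=c\rho^{\,i-1}-t\,\mathds{1}^\top v$ with $\rho=(\lambda-1)/(\lambda+1)$; imposing the two boundary identities at $i=1,m$ together with the self-consistency of $\mathds{1}^\top v$ and $\sum_i i\,v_i$ produces a scalar transcendental equation $F(\omega;m,t)=0$ whose positive roots are the $\omega_k^m$. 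The interlacing \eqref{eq:interlacing} would then follow from a Sturm-type sign count: that between two consecutive roots of $F(\cdot;m+1,\epsilon(m+2))$ the function $F(\cdot;m,\epsilon(m+1))$ changes sign exactly once. Establishing this sign pattern — equivalently, the monotone dependence of the roots under the joint move $(m,\epsilon(m+1))\to(m+1,\epsilon(m+2))$, which is precisely what the calibration of $\epsilon$ is meant to guarantee — is the technical crux and, consistent with the statement being posed as a conjecture, the step I expect to resist a short proof.
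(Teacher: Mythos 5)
You should first note the status of this statement in the paper: the \texttt{property} environment renders as ``Conjecture,'' and \Cref{rem:pm} states explicitly that a proof ``is still lacking.'' The paper's only support for \eqref{eq:interlacing} is the numerical validation in the appendix (Matlab check up to $m=10000$ and \Cref{fig:interlacing_proof}); there is no analytic proof to compare against. Your proposal is therefore being measured against nothing, and the honest answer is that you have not closed the gap either --- but your diagnosis of \emph{why} it is hard is correct and is more informative than anything in the paper. In particular: (i) the reduction to the Hermitian family $H(m,t)=\mathrm{i}(A(m)+tB(m))$ and the observation that $H(m,t)$ is a leading principal submatrix of $H(m+1,t)$ \emph{at fixed} $t$ is right, since both $A$ and $B$ are defined entrywise by functions of $j-i$ alone; (ii) the identification of the genuine obstruction --- that $C(m)$ uses $\epsilon(m+1)$ while $C(m+1)$ uses $\epsilon(m+2)$, so no Cauchy step applies directly --- is exactly the reason \Cref{thm:fan_pall} and \Cref{thm:fan_pall_mult} cannot be invoked here (the paper itself concedes this in the appendix: ``the interlacing lemmas \dots are not applicable, since the entries of $C(m)$ change with dimension''); (iii) your quantitative remark that $\|B(m+1)\|=\Theta(m^2)$ against $\epsilon(m+1)-\epsilon(m+2)=\Theta(m^{-2})$ makes the perturbation $O(1)$, i.e.\ of the same order as the spectral gaps, correctly kills the naive Weyl/perturbation route; and (iv) the three-term recurrence $(\lambda+1)v_{i+1}=(\lambda-1)v_i-2t\,\mathds{1}^\top v$ and its solution $v_i=c\rho^{i-1}-t\,\mathds{1}^\top v$ check out. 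You are also right that the paper's ``non-decreasing'' ordering is inconsistent with $\omega_k^m>\omega_{k+1}^{m+1}$ and must be read as non-increasing.

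The gap in your proposal is the one you name yourself: neither the homotopy argument (tracking $\tfrac{d}{dt}\omega_k=\langle\psi_k,\mathrm{i}B(m+1)\psi_k\rangle$ and excluding crossings of the bracketing $\omega^m$ levels as $t$ runs from $\epsilon(m+1)$ to $\epsilon(m+2)$) nor the Sturm sign count for the scalar secular function $F(\omega;m,t)$ is actually carried out, and both require the specific calibration $\epsilon(m)=(m-1)^{-1}(1-m^{-1/2})$ to enter in a way you have not made quantitative. Until one of those two steps is established, the chain ``submatrix interlacing at fixed $t$, then controlled drift under the rank-$(1,1)$ perturbation'' proves nothing, because the drift is not small compared to the gaps. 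So the proposal should be presented as a proof strategy for an open conjecture, not as a proof; as such it is a sound and potentially useful roadmap, and either of your two routes, if completed, would settle a question the authors leave open.
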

\begin{rem}
\label{rem:pm}
We verified numerically that \Cref{property:interlacing}
is always true (we verified it up
to dimension $m=10000$, see \Cref{thm:interlacing_P}), but a proof is still lacking.
Further notice, that $C(m)$ is part of the following equation:
\begin{align}
    \tilde{P}(m) &= 
    \big(P-\epsilon(m)(\mathds{1}\mathds{1}\!^\top\! P + P\mathds{1}\mathds{1}\!^\top\!) 
    \nonumber \\
    &+\epsilon^2(m) \mathds{1}\mathds{1}\!^\top\! P \mathds{1}\mathds{1}\!^\top\! \big)_{1:m-1} 
    \nonumber \\
    & = \Big( \frac{1}{2}(\alpha_1 + \alpha_2)^2-\alpha_2\Big) I 
    \nonumber \\
    &+\frac{1}{2}(\alpha_1+\alpha_2)^2 C(m-1)
    \label{eq:P_tilde}
\end{align}
with $P$ in \eqref{eq:def_mat} and $\epsilon(m)$ defined in \Cref{property:interlacing}.  Note that the interlacing property \eqref{eq:interlacing} holds also for $\tilde{P}(m)$, since it is arranged by a scaled unit matrix and the skew-symmetric matrix $C(m-1)$ (cf. \cite{connes1998noncommutative}). The 
interlacing property of  $\tilde{P}(m)$
is utilized in the proof of \cref{thm:existence_svd}.
\end{rem}
The proof of \cref{thm:existence_svd} is constructive and presented in \cref{sec:proof_existence_svd}, where in particular in  \eqref{pf:2_cond_normal}, \Cref{property:interlacing} enters. A step-by-step construction of $W$ for a
given $T_d$ is provided in \cref{sec:construction_svd}.
Moreover we obtain as a corollary (which follows by the proof of \cref{thm:existence_svd}):
\begin{corollary}
	\label{thm:W_sym}
	If $2\alpha_2-(\alpha_1+\alpha_2)^2=0$ and $T_d$ skew-symmetric, then there always exists an $W\in\mathbb{R}^{2n\times m}$ with $m=\text{rk}(T_d)+1$. 
\end{corollary}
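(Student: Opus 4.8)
The plan is to specialize the constructive proof of \Cref{thm:existence_svd} to the degenerate parameter regime and to exploit the fact that there the matrix $P$ in \eqref{eq:def_mat} collapses to a (scaled) nonsingular skew-symmetric matrix, so that neither a perturbation nor the interlacing \Cref{property:interlacing} is needed. First I would record what the hypothesis $2\alpha_2-(\alpha_1+\alpha_2)^2=0$ does to $P$. Setting $c_1=2\alpha_2-(\alpha_1+\alpha_2)^2=0$ forces $(\alpha_1+\alpha_2)^2=2\alpha_2$ and hence $c_2=\alpha_2-(\alpha_1+\alpha_2)^2=-\alpha_2$; moreover $\alpha_2\neq 0$, since $\alpha_2=0$ would give $\alpha_1+\alpha_2=0$, excluded by \eqref{eq:TM}. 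Thus the nontrivial $(m-1)\times(m-1)$ block of $P$ has zero diagonal, entries $-\alpha_2$ above and $+\alpha_2$ below the diagonal, i.e. it equals $-\alpha_2 A(m-1)$ with $A(\cdot)$ the skew-symmetric matrix of \Cref{property:interlacing}, while the last row and column of $P$ vanish.

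Writing $W=[\,W'\mid w_m\,]$ with $W'\in\mathbb{R}^{2n\times(m-1)}$, the zero border of $P$ gives $WPW^\top=-\alpha_2\,W'A(m-1)(W')^\top$, independent of $w_m$, and the constraint $W\mathds{1}=0$ can be met a posteriori by setting $w_m=-W'\mathds{1}$. Hence the system \eqref{eq:svd_prob} reduces to solving $W'A(m-1)(W')^\top=-\tfrac{1}{\alpha_2}T_d$. Next I would take $m=\mathrm{rk}(T_d)+1$, so that $m-1=r:=\mathrm{rk}(T_d)$, which is even, say $r=2p$, because $T_d$ is skew-symmetric. The key structural fact to establish is that $A(2p)$ is nonsingular; I would do this via its Pfaffian, which for this ``upper-triangular $\pm1$'' skew matrix equals $1$ in every even size, so $A(2p)$ defines a nondegenerate alternating form. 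I would then bring both sides to symplectic normal form: let $Q\in\mathbb{R}^{2n\times 2p}$ be an orthonormal basis of the range of $T_d$, write $T_d=Q\hat{T}Q^\top$ with $\hat{T}$ nonsingular skew-symmetric, and use the congruence classification of nondegenerate alternating forms to produce an invertible $M\in\mathbb{R}^{2p\times 2p}$ with $MA(2p)M^\top=-\tfrac{1}{\alpha_2}\hat{T}$. Setting $W'=QM$ gives $W'A(2p)(W')^\top=Q\big(-\tfrac{1}{\alpha_2}\hat{T}\big)Q^\top=-\tfrac{1}{\alpha_2}T_d$, whence $WPW^\top=T_d$; completing $W$ by $w_m=-W'\mathds{1}$ yields $W\mathds{1}=0$ and $W\in\mathbb{R}^{2n\times m}$ with $m=\mathrm{rk}(T_d)+1$, as claimed.

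The main obstacle I anticipate is the nonsingularity of $A(2p)$ at the minimal size $m-1=\mathrm{rk}(T_d)$: the general theorem hides this behind the interlacing \Cref{property:interlacing}, but in the skew-symmetric case one wants the exact statement that the unperturbed triangular skew matrix is invertible in even dimension, which I would pin down through the Pfaffian (equivalently, by checking that $A(2p)$ has no zero eigenvalue). A secondary point requiring care is the embedding of the $r$-dimensional range of $T_d$ into $\mathbb{R}^{2n}$ via the orthonormal $Q$, so that $W'$ realizes $T_d$ itself and not merely its compression to a $2p$-dimensional subspace; everything else is routine once the symplectic congruence normal form is invoked.
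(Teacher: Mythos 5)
Your proof is correct, but it takes a genuinely different route from the paper. The paper obtains the corollary as a by-product of the SVD-based construction in the proof of \Cref{thm:existence_svd}: one writes $W=U\Sigma V^\top$, and in the regime $\mu=\alpha_2-\tfrac12(\alpha_1+\alpha_2)^2=0$ the condition \eqref{pf:2_cond1} becomes vacuous (since $\gamma_k=0$ for skew-symmetric $T_d$), while \eqref{pf:2_cond2} is solved with $m=r+1$ by the choice \eqref{pf:2_cond_skew}, $\sigma_{2k}=\delta_k\omega_k^{-1}\sigma_{2k-1}^{-1}$, where $\pm\omega_k i$ are the eigenvalues of $\tilde P(r+1)=\alpha_2 C(r)$ from \eqref{eq:P_tilde}; this tacitly requires all $\omega_k>0$, i.e.\ nonsingularity of the \emph{perturbed} matrix $C(r)=A(r)+\epsilon(r+1)B(r)$. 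You instead exploit the zero last row and column of $P$ in \eqref{eq:def_mat} to decouple the two equations in \eqref{eq:svd_prob}: the quadratic equation only sees $W'$ and the \emph{unperturbed} block $P_{1:m-1}=-\alpha_2A(r)$, and the constraint $W\mathds{1}=0$ is restored afterwards through the last column. The problem then reduces to a congruence $W'A(r)(W')^\top=-\alpha_2^{-1}T_d$, which is solvable because $A(r)$ is nonsingular for even $r$ (Pfaffian equal to $1$, by the standard expansion along the first row, which reproduces the same all-ones pattern in size $r-2$) and all nondegenerate alternating forms of equal dimension are congruent. Your argument is more elementary and fully self-contained --- in particular it makes the nonsingularity of the relevant matrix explicit rather than hiding it in the spectrum of $C(r)$, and it nowhere touches \Cref{property:interlacing}. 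What the paper's route buys in exchange is uniformity with the general normal case and explicit control of the singular values of $W$ (the tuning parameters emphasized in \Cref{sec:algo_params}), which your congruence matrix $M$ does not directly provide. Two small points worth stating explicitly if you write this up: $\alpha_2\neq0$ follows from $\alpha_1+\alpha_2\neq0$ exactly as you note, and the compression $\hat T=Q^\top T_dQ$ is nonsingular because the range of a real skew-symmetric matrix is the orthogonal complement of its kernel.
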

%
%
%
\begin{rem}
	\label{rem:brockett_int2}
	It is worthwhile to point out an interesting connection between the equations in \eqref{eq:svd_prob} 
and nonlinear control theory, i.e. the controllability of the so-called nonholonomic integrator.
	Suppose $\{w_\ell\}_{\ell = 0}^{m-1}$ is a solution of \eqref{eq:svd_prob}, then 
	it can be verified by direct calculations (see also proof of \cref{thm:matrices} in \cref{sec:proof_matrices}) that it is
	also a solution of the two point boundary value problem 
	\begin{align}
		\begin{split}
		y_0 &= 0,\ \ Z_0 = 0,\ \ y_m = 0,\ \ Z_{m} = T_d\\
			y_{k+1} &= y_k+w_k
			\\
			Z_{k+1} &= Z_k + (\alpha_1+\alpha_2)^2w_k y_k^\top + \alpha_2w_kw_k^\top 
		\end{split}
		\label{eq:brockett_gen2}
	\end{align}
	with $k = 0,\ldots,m-1$, states $y_k \in \mathbb{R}^{2n}$, $Z_k \in \mathbb{R}^{2n \times 2n}$, input $w_k \in \mathbb{R}^{2n}$, and vice versa.
	In particular with $W \allone{} = 0$, i.e., $w_{m-1} = -\sum_{i=0}^{m-2}w_i$, \eqref{eq:brockett_gen2} translates into
	\begin{align}
		\begin{split}
		y_0 &= 0,\ \ Z_0 = 0,\ \ Z_{m-1} = T_d\\
			y_{k+1} &= y_k+w_k\\
			Z_{k+1} &= Z_k + \alpha_2w_ky_k^\top+(\alpha_2-(\alpha_1+\alpha_2)^2)y_kw_k^\top\\
			&+ (2\alpha_2-(\alpha_1+\alpha_2)^2)w_kw_k^\top 
		\end{split}.
		\label{eq:brockett_gen}
	\end{align}

	Considering now the case $[\alpha_1\ \alpha_2] = [\nicefrac{1}{2}\ \nicefrac{1}{2}]$
	shows that \eqref{eq:brockett_gen} is the state-transition of the generalized discrete-time nonholonomic integrator \cite{altafini2016nonintegrable} with given initial and final states. Problem \eqref{eq:brockett_gen2} with $[\alpha_1\ \alpha_2] = [1\ 0]$ has a  similar structure.
	Hence, \Cref{thm:existence_svd} provides an explicit solution to this 
	state transition problem. Moreover, this viewpoint underlines the relationship to non-commutative maps and flows
	as indicated in \Cref{sec:existing_results} (\Cref{fig:ncm}).
	Another, more geometric, interpretation of \eqref{eq:svd_prob}
	is also provided in \Cref{sec:numerical_results}.
\end{rem}
%
%
\subsection{Gradient Generating Functions}
This part addresses \ref{step2}, i.e. solving the (functional)  equation \eqref{eq:problem}
for $f,g$, and \corrected{$T_d \in \mathbb{R}^{2n \times 2n}$} with $T(W)=T_d$. 
First, solutions $(T_d,f,g)$ for the parameter setting $2\alpha_2 - (\alpha_1+\alpha_2)^2 = 0$ are presented. 
\begin{theorem}
	\label{thm:structure_W_H}
    Let $2\alpha_2 -(\alpha_1+\alpha_2)^2=0$ and $T_d$ skew-symmetric, then \eqref{eq:problem} is satisfied by the following triples $(T_d,f,g)$, where $a,b\in\mathbb{R}_{>0}$ and $c, \phi \in \mathbb{R}$:
	\begin{flalign}
		\bullet \quad	&T_d  = \begin{bmatrix}
				0 & -I \\ I & 0
			\end{bmatrix},
			\nonumber \\
			&g(z) = -f(z) \int f(z)^{-2} dz,\ f:\mathbb{R}\rightarrow\mathbb{R}
			\label{eq:cases_H1}
			&&
	\end{flalign}
	\vspace*{-1em}
	\begin{flalign}
		\bullet \quad &T_d  = \begin{bmatrix}
				aQ & -I \\ I & b Q
			\end{bmatrix},\ Q = -Q^\top, 
			\nonumber\\
			&f(z) = a^{-\nicefrac{1}{2}}\sin\Big(\sqrt{ab}z+\phi\Big),
			\nonumber \\ 
			&g(z) = b^{-\nicefrac{1}{2}}\cos\Big(\sqrt{ab}z+\phi\Big)
			\label{eq:cases_H2}
			 &&
	\end{flalign}
	\vspace*{-1em}
	\begin{flalign}
		\bullet \quad &T_d  = \begin{bmatrix}
				aQ & -I \\ I & -b Q
			\end{bmatrix},\ Q = -Q^\top, 
			\nonumber\\
			&f(z) = \pm a^{-\nicefrac{1}{2}}\cosh\Big(\sqrt{ab}z+\phi\Big),
			\nonumber \\ 
			&g(z) = \mp b^{-\nicefrac{1}{2}}\sinh\Big(\sqrt{ab}z+\phi\Big)
			\label{eq:cases_H3}
			 &&
	\end{flalign}
	\vspace*{-1em}
	\begin{flalign}
		\bullet \quad	& T_d  = \begin{bmatrix}
				Q & -I \\ I & 0
			\end{bmatrix},\  Q = -Q^\top, 
			\nonumber \\ 
			&f(z) = \pm \sqrt{a},\ g(z) = \mp \frac{z}{\sqrt{a}}
			\label{eq:cases_H4}
			&&
	\end{flalign}
	\vspace*{-1em}
	\begin{flalign}
		\bullet \quad	& T_d  = \begin{bmatrix}
				0 & -I \\ I & Q
			\end{bmatrix},\  Q = -Q^\top, 
			\nonumber \\ 
			&f(z) = \pm \frac{z}{\sqrt{a}},\ g(z) = \pm \sqrt{a}, 
			\label{eq:cases_H5}
			&&
	\end{flalign}
	\vspace*{-1em}
	\begin{flalign}
		\bullet \quad &T_d  = \begin{bmatrix}
		0 & -I-Q \\ I-Q & 0
		\end{bmatrix},\ Q = -Q^\top, 
		\nonumber \\ 
		&f(z) = \pm\frac{1}{\sqrt{a}}e^{-\frac{a}{2}z},
		\ g(z) = \mp \frac{1}{\sqrt{a}}e^{\frac{a}{2}z},
		\label{eq:cases_H6}
		&&
	\end{flalign}
	\vspace*{-1em}
	\begin{flalign}
		\bullet \quad &T_d  = \begin{bmatrix}
		\corrected{a}Q & -I-cQ \\ I-cQ & \corrected{b}Q
		\end{bmatrix},\ Q = -Q^\top, 
		\nonumber \\ 
		&f(z) = \sqrt{\frac{b}{ab-c^2}}\sin\Big(\sqrt{ab-c^2}z+\phi\Big),
		\nonumber \\
		&g(z) = b^{-\nicefrac{1}{2}}\cos\Big(\sqrt{ab-c^2}z+\phi\Big)
		\label{eq:cases_H7}
		&&
	\end{flalign}
	In addition, for each $T_d$ in \eqref{eq:cases_H1}-\eqref{eq:cases_H7} there exists an $W$, such that $T(W)=T_d$ in \eqref{eq:cond_equ_gen}.
	In~\eqref{eq:cases_H7}, we require that $a,b>c$. 
	%
\end{theorem}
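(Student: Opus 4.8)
The statement asserts that seven explicit triples solve the functional equation \eqref{eq:problem}; since the claim is one of sufficiency, the core of the proof is verification, and I would organize it around a single reduction. First, the concluding sentence is essentially free: every $T_d$ in \eqref{eq:cases_H1}--\eqref{eq:cases_H7} is skew-symmetric and we work under $2\alpha_2-(\alpha_1+\alpha_2)^2=0$, so \Cref{thm:W_sym} immediately produces a $W$ with $T(W)=T_d$ (in fact with $m=\text{rk}(T_d)+1$). Thus everything reduces to exhibiting $f,g$ that solve \eqref{eq:problem} for each stated $T_d$.

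The unifying observation is that every listed $T_d$ has the block form
\begin{equation}
T_d=\begin{bmatrix} a_1 Q & -I-cQ \\ I-cQ & a_2 Q\end{bmatrix},\qquad Q=-Q^\top,
\label{eq:plan_Td}
\end{equation}
for real scalars $a_1,a_2,c$, the individual cases fixing their signs and zeros. I would substitute this partition into the block-expanded form \eqref{eq:problem_w_structure} of \eqref{eq:problem} and separate the part proportional to $I$ from the part proportional to $Q$. The $I$-part collapses to the Lie-bracket (Wronskian) condition $[f,g]=-1$, while the $Q$-part is an exact derivative and integrates to the first integral $a_1 f^2+a_2 g^2-2cfg=\text{const}$. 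Hence, for $T_d$ of the form \eqref{eq:plan_Td}, solving \eqref{eq:problem} is equivalent to this pair of scalar conditions.

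These two conditions have a transparent geometric meaning that drives the case analysis: the first integral confines the planar curve $z\mapsto(f(z),g(z))$ to a conic whose type is governed by the signature of $\left[\begin{smallmatrix} a_1 & -c \\ -c & a_2\end{smallmatrix}\right]$, and $[f,g]=-1$ forces a constant areal velocity along it. I would classify accordingly: a vanishing form gives the unconstrained family \eqref{eq:cases_H1} (arbitrary $f$, with $g$ recovered by integrating the linear ODE $[f,g]=-1$); a rank-one form gives the affine solutions \eqref{eq:cases_H4}--\eqref{eq:cases_H5} (one of $f,g$ constant, the other linear); a positive-definite form yields an ellipse parametrized by the trigonometric solutions \eqref{eq:cases_H2} and \eqref{eq:cases_H7}; and an indefinite form yields a hyperbola, giving either the hyperbolic solutions \eqref{eq:cases_H3} or, in rotated coordinates $fg=\text{const}$, the exponential solutions \eqref{eq:cases_H6}. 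For each triple I would then differentiate the stated $f,g$, confirm $[f,g]=-1$ and the first integral, and check that the amplitude and frequency constants (e.g.\ the $\sqrt{ab-c^2}$ in \eqref{eq:cases_H7}) are exactly those normalizing the Wronskian to $-1$.

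The verifications themselves are routine differentiations; I expect the main obstacle to lie in the degenerate conics. Where the quadratic form drops rank the elliptic or hyperbolic parametrization is unavailable and one must integrate $[f,g]=-1$ directly, which is precisely what produces the qualitatively distinct linear and exponential branches \eqref{eq:cases_H4}--\eqref{eq:cases_H6}; tracking the admissible sign pairs ($\pm,\mp$) and enforcing the positivity constraints $a,b>c$ and $ab>c^2$ in the cross-term case \eqref{eq:cases_H7} (so that the conic is a genuine ellipse admitting real bounded solutions) is the delicate bookkeeping. No completeness argument is needed, since the statement only requires that the listed triples satisfy \eqref{eq:problem}.
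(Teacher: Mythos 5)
Your reduction is the same one the paper uses, just packaged more cleanly. The paper's proof also writes $T_{12}=-R$ with $R=I+\tilde R$, extracts the Wronskian condition $g'(z)f(z)-f'(z)g(z)=-1$ from the identity block, and is left with $f'f\,T_{11}+g'g\,T_{22}=(f'g+g'f)\tilde R$, which is exactly your ``$Q$-component''; your single block ansatz with the signature/conic classification subsumes the paper's three cases ($\tilde R=0$; $f'g+g'f=0$; $\tilde R=cQ\neq0$) and their sub-cases. Where you genuinely diverge is in how the two scalar conditions are then handled: the paper substitutes $g=-f\int f^{-2}dz$ and $y'=f^{-2}$ to turn each first integral into a Riccati equation ($y'=a+by^2$, $y'=ay$, $y'=\tfrac a2+cy+\tfrac b2 y^2$, \dots) and \emph{derives} $f,g$ by solving it --- which is what supports the authors' later claim of near-exhaustiveness --- whereas you only \emph{verify} the printed pairs, which suffices for the sufficiency claim and is shorter. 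One caution if you carry out the verification: for the last triple \eqref{eq:cases_H7}, the in-phase pair $f\propto\sin(\omega z+\phi)$, $g\propto\cos(\omega z+\phi)$ parametrizes an axis-aligned ellipse, so the cross-term first integral $\tfrac a2 f^2+\tfrac b2 g^2-cfg=\mathrm{const}$ fails for $c\neq0$: the $Q$-coefficient $af'f+bg'g-c(f'g+g'f)$ contains a non-vanishing $\tfrac{c^2\omega}{ab-c^2}\sin\theta\cos\theta-\tfrac{c\omega}{\sqrt{ab-c^2}}\cos2\theta$ term, so a phase offset between $f$ and $g$ is actually required. Your direct-verification route exposes this; the paper's Riccati derivation around \eqref{eq_p2:6}--\eqref{eq_p2:7} obscures it. Your appeal to \Cref{thm:W_sym} for the existence of $W$ is exactly how the paper disposes of that part of the claim.
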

\begin{rem}
    Every pair $f,g$ in \eqref{eq:cases_H2}-\eqref{eq:cases_H7} satisfy \eqref{eq:cases_H1}, hence, these generating functions are valid for the given $T_d$ in \eqref{eq:cases_H1}, too. The advantage of the specified $T_d$'s are discussed in \Cref{sec:numerical_results}. 
\end{rem}
\begin{rem}
    Consider the indefinite integral in \eqref{eq:cases_H1}. Let $F:\mathbb{R}^n\rightarrow\mathbb{R}$ be an anti-derivative of $f(z)^{-2}$. Then so is $F+\bar c$ for any $\bar c \in \mathbb{R}$. Set $g(z) = -f(z)(F(z)+\bar{c}).$ The constant  $\bar{c}$ is chosen such that $g'(z)f(z)-f'(z)g(z)=-1$.
\end{rem}
The proof of \cref{thm:structure_W_H} is given in \cref{sec:p_sructure_W_H}.
%
%
Solutions $(T_d,f,g)$ of \eqref{eq:problem} for the parameter setting $2\alpha_2 - (\alpha_1+\alpha_2)^2 \neq 0$ are presented next. 
\begin{theorem}
	\label{thm:structure_W_E}
	Let \corrected{$T_d \in \mathbb{R}^{2n \times 2n}$} be normal and $(2\alpha_2 -(\alpha_1+\alpha_2)^2) (T_d+T_d^\top)$ be positive definite, then \eqref{eq:problem} is satisfied by the following triples $(T_d,f,g)$, where $r:\mathbb{R}\rightarrow\mathbb{R}_{>0}$, $a\in\mathbb{R}\backslash\{0\}$, $b\in\mathbb{R}_{>0}$, and $\phi \in \mathbb{R}$:
	\begin{flalign}
			\bullet \ \	&T_d  = \begin{bmatrix}
				a I & -I \\ I & a I
			\end{bmatrix},\ a(2\alpha_2 - (\alpha_1+\alpha_2)^2)>0
			\nonumber \\
			&\! f(z) \!=\! \sqrt{r(z)}\sin(\varphi(z)),\ g(z)\! =\! \sqrt{r(z)}\cos(\varphi(z)), 
			\nonumber\\
			&\varphi(z) = \frac{a}{2}\ln(r(z))+\int \frac{1}{r(z)}dz+\phi 
			\label{eq:cases_E1}
			&&
	\end{flalign}
	\vspace*{-1em}
	\begin{flalign}
			\bullet \quad &T_d  = \begin{bmatrix}
				Q & -I \\ I & Q
			\end{bmatrix},
			\nonumber \\
			& (2\alpha_2 - (\alpha_1+\alpha_2)^2)(Q+Q^\top) \text{ pos. def. and normal}, 
			\nonumber \\
			&f(z) = b^{-\nicefrac{1}{2}}\sin\Big(bz+\phi\Big),
			\nonumber \\ 
			&g(z) = b^{-\nicefrac{1}{2}}\cos\Big(bz+\phi\Big)
			\label{eq:cases_E2}
			&&
	\end{flalign}
In addition for every $T_d$ in \eqref{eq:cases_E1}-\eqref{eq:cases_E2} there exists an $W$, such that $T(W)=T_d$ in \eqref{eq:cond_equ_gen}. 
\end{theorem}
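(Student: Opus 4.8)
The plan is to reduce the matrix identity \eqref{eq:problem} to the scalar form already derived in \cref{rem:T}, namely \eqref{eq:problem_w_structure}, and then to verify each listed triple $(T_d,f,g)$ by direct substitution; the existence of a matching exploration sequence matrix $W$ will be supplied by \cref{thm:existence_svd}. Writing $T_d$ in the block form of \eqref{eq:structure_T}, every candidate in \eqref{eq:cases_E1}--\eqref{eq:cases_E2} has $T_{11}=T_{22}$ and $T_{12}=-T_{21}=-I$, so \eqref{eq:problem_w_structure} collapses to a single relation involving only the two scalar combinations $f'f+g'g$ and $g'f-f'g$. The entire computation is organized around these two quantities.

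For \eqref{eq:cases_E1} I would introduce polar coordinates $f=\sqrt{r}\sin\varphi$ and $g=\sqrt{r}\cos\varphi$ with $r=f^2+g^2>0$. A short computation gives $f'f+g'g=\tfrac12 r'$ and $g'f-f'g=-r\varphi'$. Since here $T_{11}=T_{22}=aI$ and $T_{12}=-T_{21}=-I$, equation \eqref{eq:problem_w_structure} becomes the scalar ODE $\tfrac{a}{2}r'-r\varphi'=-1$, i.e. $\varphi'=\tfrac{a}{2}\tfrac{r'}{r}+\tfrac1r$. Integrating yields exactly the stated phase $\varphi(z)=\tfrac{a}{2}\ln r(z)+\int r(z)^{-1}\,dz+\phi$, so the identity holds for any positive $r\in C^2$. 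It remains to note that $T_d=aI_{2n}+\big[\begin{smallmatrix}0&-I\\ I&0\end{smallmatrix}\big]$ is normal (a scalar multiple of the identity plus a skew matrix) and that $T_d+T_d^\top=2aI_{2n}$, whence the positive-definiteness hypothesis of the theorem is equivalent to $a(2\alpha_2-(\alpha_1+\alpha_2)^2)>0$, as required.

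For \eqref{eq:cases_E2}, with $T_{11}=T_{22}=Q$ and $T_{12}=-T_{21}=-I$, equation \eqref{eq:problem_w_structure} reads $(f'f+g'g)Q+(g'f-f'g)I=-I$ for all $z$. Because $Q$ is not a scalar multiple of $I$ in the generic normal case, the matrices $I$ and $Q$ are linearly independent, which forces the coefficient split $f'f+g'g=0$ and $g'f-f'g=-1$. The first condition makes $f^2+g^2$ constant, say equal to $b^{-1}$; feeding this constant amplitude into the polar formulas above reduces the second condition to $\varphi'=b$, giving precisely the sinusoidal pair in \eqref{eq:cases_E2}, and substituting back confirms the identity. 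Normality of $T_d$ is equivalent to normality of $Q$, and $T_d+T_d^\top=\mathrm{diag}(Q+Q^\top,\,Q+Q^\top)$ shows that the positive-definiteness hypothesis reduces to that of $(2\alpha_2-(\alpha_1+\alpha_2)^2)(Q+Q^\top)$.

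Finally, for each of the two $T_d$ I have confirmed that it is normal and that $(2\alpha_2-(\alpha_1+\alpha_2)^2)(T_d+T_d^\top)$ is positive definite, so \cref{thm:existence_svd} supplies an exploration sequence matrix $W$ with $T(W)=T_d$ and $W\mathds{1}=0$, completing the claim. The calculus is routine once the polar substitution is in place; I expect the only genuinely delicate points to be justifying the linear-independence split in \eqref{eq:cases_E2} (which quietly excludes the degenerate case $Q\in\mathbb{R}I$, where E2 collapses into E1) and the fact that the existence of $W$ rests, through \cref{thm:existence_svd}, on the still-unproven \cref{property:interlacing}.
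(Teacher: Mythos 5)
Your proposal is correct and follows essentially the same route as the paper: reduce \eqref{eq:problem} to the block identity \eqref{eq:problem_w_structure}, solve the resulting scalar ODE for \eqref{eq:cases_E1} and reduce \eqref{eq:cases_E2} to the sinusoidal case of \cref{thm:structure_W_H}, then invoke \cref{thm:existence_svd} for the existence of $W$. The only difference is cosmetic: you carry out the polar-coordinate computation for \eqref{eq:cases_E1} explicitly, whereas the paper delegates that derivation to the cited proof of \cite[Theorem 1]{jfESNCM}, and you correctly flag the reliance on the unproven \cref{property:interlacing}.
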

The proof of \cref{thm:structure_W_E} is given in \cref{sec:p_sructure_W_E}.
\begin{rem}
    The list of triples $(T_d,f,g)$ in \cref{thm:structure_W_H} is essentially exhaustive, save for some scaled version of the presented cases. A case by case study is presented in the proof of \Cref{thm:structure_W_H} in \Cref{sec:p_sructure_W_H}. Whereas the list of triples $(T_d,f,g)$  in \cref{thm:structure_W_E} is not exhaustive (cf. \Cref{sec:p_sructure_W_E}).
\end{rem}
\cref{thm:structure_W_H} and \cref{thm:structure_W_E} together with \cref{thm:existence_svd} solve \eqref{eq:problem} and \eqref{eq:cond_equ_gen_zero} and thus ensure the existence of a exploration sequence $W$. Hence, a gradient descent step is approximated by the proposed algorithm \eqref{eq:algo} with transition maps \eqref{eq:TM}.

\section{ALGORITHM, PARAMETERS AND NUMERICAL RESULTS}\label{sec:numerical}

In this section we present some numerical studies of the proposed 
class of algorithms.
We carry out simulations and discuss
how various choices of $T_d$,
of the singular values of $W$, of the sequence length (period) $m$, 
or of the parameters $\alpha_1,\alpha_2$ influence the qualitative behavior of the algorithm.
Further, a numerical approach to  construct  the exploration 
sequences using nonlinear programming is presented.

We start by summarizing the 
proposed optimization algorithm and the involved parameters.

\subsection{Algorithm and Parameters}
\label{sec:algo_params}
\corrected{
The design parameters and functions involved in the proposed algorithm are
\begin{enumerate}[leftmargin=*]
	\item map parameters $\alpha_1,\alpha_2 \in \mathbb{R}$ with $\alpha_1 + \alpha_2 \not = 0$,
	\item gradient generating functions $f,g: \mathbb{R} \to \mathbb{R}$,
	\item matrix $T_d$ and exploration sequence matrix $W$, in particular
	singular values $\sigma_{i},\ i = 1,\ldots,\text{rk}(T_d)$ of $W$, 
	\item step size $h>0$.
\end{enumerate}
Hence, algorithm \eqref{eq:algo} with \eqref{eq:TM} is defined in terms of $W$ as follows: 
\begin{algorithm}[h]
	\corrected{
	\caption{Derivative-free optimization algorithm with non-commutative maps}
	\begin{algorithmic}[1]
		\State \textbf{Input:} $x_0$, $h$, $\alpha_1, \alpha_2$, $f(J(\cdot)), g(J(\cdot))$, $T_d$, $\sigma_i\ (i = 1,\ldots,\text{rk}(T_d))$, stop criterion
		\State Calculate $W$ and $m$ as described in \cref{sec:construction_svd}
		\State $k=0$
		\While {stop criterion is not fulfilled}
		\State $\ell = k \bmod (m) + 1$
		\State $e_\ell = [0_{\ell},1,0_{n-1-\ell}]^\top$
		\State $Y(J(x_k)) = [f(J(x_k))I \ g(J(x_k))I]$
		\State $\hat{x}_{k} = x_k + \sqrt{h}Y\big(J(x_k)\big)W e_{\ell}$
		\State $Y(J\big(\hat{x}_{k})\big) = [f(J(\hat{x}_k))I \ g(J(\hat{x}_k))I]$
		\State $x_{k+1} = x_k + \sqrt{h}\big(\alpha_1 Y\big(J(x_k)\big)+\alpha_2 Y(J\big(\hat{x}_{k})\big)\big) We_{\ell} $
		\State $k\leftarrow k+1$
		\EndWhile
		\State \textbf{return} \textit{$[x_{0},x_1,\ldots]$}
	\end{algorithmic}
	\label{alg:alg}}
\end{algorithm}
Notice that $Y(J(x_k))$ is a $n \times 2n$ matrix. Moreover, if $\alpha_2 = 0$, then line \correct{8} and \correct{9} in \cref{alg:alg} can be skipped.}
In the following we discuss the influence of the design parameters on the algorithm's behavior. Additionally, a set of parameters working well in generic situations, which can then be used as a starting point to obtain optimized parameters for a particular application, is provided.
\textit{1. Map parameters ${\alpha_1,\, \alpha_2}$.} 
The parameters weigh $Y(J(x_k)$ and  $Y(J(\hat{x}_k)$ in \cref{alg:alg}, respectively.
In particular, they can be utilized to choose between a single-point ($\alpha_2 = 0$) or a two-point algorithm. 
They are to be normalized according to $\alpha_1+\alpha_2 = 1$ and to tune
 according to the ratio of $\alpha_1,\alpha_2$, while utilizing the step size $h$ to tune the convergence speed. 
Moreover, the choice of $\alpha_1,\alpha_2$ restricts the choice of $T_d$ to be skew-symmetric for $2\alpha_2 \corrected{-} (\alpha_1+\alpha_2)^2 = 0$ and otherwise normal (cf. \cref{thm:structure_W_H} and \cref{thm:structure_W_E}). 
In practice, the parameter sets we found providing the best performance were $[\alpha_1\ \alpha_2] = [1\ 0]$ for the single-point and $[\alpha_1\ \alpha_2] = [\nicefrac{1}{2}\ \nicefrac{1}{2}]$ for the two-point gradient-approximation scheme.
\textit{2. Generating functions ${f,\, g}$.} 
The generating functions comprise a scaling of the  objective evaluated at $x_k$ and $\hat{x}_k$ as stated in \cref{alg:alg}. 
Various choices are presented in \cref{thm:structure_W_H} and \cref{thm:structure_W_E}; depending on $\alpha_1,\alpha_2$. Often we have chosen $f,g$ as sinusoidal functions, since the algorithm showed a very stable behavior
for that cases. 
Note that high function values of $f,g$ or if $f,g$ scale arbitrarily large with $J(x_k)$, the algorithm performs large steps which may cause instabilities. 
In the case of bounded functions, such as sinusoidal functions, (arbitrarily) large steps sizes are avoided.
Further, if $J,f,g$ vanish at a minimum $x^*$, asymptotic convergence to $x^*$ (instead of practical convergence) has been observed in our studies.
\textit{3. Exploration sequence matrix $W$ and $T_d$.} 
The exploration sequence matrix $W$ depends on the choice of $T_d$, specifically on the eigenvalues of $T_d$. 
A step-by-step construction of $W$ based on the algorithm parameters is presented in \cref{sec:construction_svd}. 
As explained in this construction, the singular values of $W$ can be chosen (see \Cref{thm:Td_normal} below), hence this degree of freedom can be used in the algorithm tuning. As shown in numerical examples below, smaller singular values lead to smoother trajectories, but more steps (larger $m$) are needed to perform one gradient approximation step. 
There exists a set of optimal singular values in the sense of minimal number of steps $m$, which is $m=\text{rk}(T_d)+1$. Therewith, the choice of $T_d$ influences the lower bound on $m$ (see \Cref{thm:min_steps} below). 
Note that minimal sequence length $m$ does not always lead to the fastest convergence behaviour. 
\textit{4. Step size $h$.}  
The approximated gradient is scaled with the step size $h$, and hence, $h$ influences the speed of convergence, as well as the area of exploration around $x_k$. As stated in \Cref{thm:conv} there exists an upper bound on $h$ such that \correct{semi-global practical asymptotic convergence (if (A1) and (A2) holds)} is ensured. In our numerical studies, $h$ is often chosen as $0.001 \le h \le 0.5$. 
\begin{corollary}
    \label{thm:Td_normal}
    In the case $2\alpha_2-(\alpha_1+\alpha_2)^2 = 0$, the singular values $\sigma_{2\ell-1},\sigma_{2\ell}$, $\ell = 1,\ldots,\lceil \text{rk}(T_d)/2 \rceil$, of $W$ can  be chosen arbitrarily. Otherwise, the singular values of $W$ have to satisfy $\sigma_{2\ell-1}=\sigma_{2\ell}$ and
    \begin{align}
    	T_d = 
    	\begin{bmatrix}
    		\text{diag}([\gamma_1\ \cdots \gamma_n]) & - I \\
    		I & \text{diag}([\gamma_1\ \cdots \gamma_n])
    	\end{bmatrix},
    	\label{eq:Td_E}
    \end{align}
    with
    \begin{align}
    	\gamma_\ell = \Big(\alpha_2-\frac{1}{2}(\corrected{\alpha_1+\alpha_2})^2\Big)\sigma^2_{2\ell-1}.
    	\label{eq:eval_Td_normal}
    \end{align}
    \end{corollary}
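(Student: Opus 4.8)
The plan is to read the corollary directly off the identity between $T_d$ and $W$ provided by \cref{thm:matrices}, combined with the canonical form used in the construction in the proof of \cref{thm:existence_svd}. First I would split $P = S + R$ into its symmetric part $S=\tfrac12(P+P^\top)$ and skew part $R=\tfrac12(P-P^\top)$. A one-line computation from \eqref{eq:def_mat}, using $c_1+0 = \beta$ and $c_2+\alpha_2 = \beta$ with $\beta := 2\alpha_2-(\alpha_1+\alpha_2)^2$, shows that—apart from its vanishing last row and column—$S$ equals $\tfrac{\beta}{2}(I+\mathds{1}\mathds{1}^\top)$ on the leading $(m-1)\times(m-1)$ block, while $R$ is proportional to the matrix $A(m-1)$ appearing in \cref{property:interlacing}.

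The decisive step is to isolate the symmetric part of $T_d$. Because the last row and column of $P$ vanish, \cref{thm:matrices} reads $T_d = \tilde W\,\tilde P\,\tilde W^\top$, where $\tilde W=[w_0\ \cdots\ w_{m-2}]$ and $\tilde P=P_{1:m-1}$. Taking the symmetric part and invoking $W\mathds{1}=0$ (so that $\tilde W\mathds{1} = -w_{m-1}$) collapses the rank-one $\mathds{1}\mathds{1}^\top$ contribution into $w_{m-1}w_{m-1}^\top$, and the leftover $\tilde W\tilde W^\top + w_{m-1}w_{m-1}^\top$ reassembles to $WW^\top$. This yields the clean identity $T_d+T_d^\top = \beta\,WW^\top$. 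Writing the SVD $W = U\Sigma V^\top$ then gives $T_d+T_d^\top = \beta\,U\Sigma^2U^\top$, so the eigenvalues of the symmetric part of $T_d$ are exactly $\{\beta\sigma_i^2\}$.

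With this identity the two cases are immediate. When $\beta=0$ the symmetric part of $T_d$ vanishes identically, so $T_d$ is skew-symmetric and $WW^\top$ enters no constraint; the singular values are thus unconstrained and, as in the construction underlying \cref{thm:W_sym}, each pair $\sigma_{2\ell-1},\sigma_{2\ell}$ may be chosen arbitrarily (only their product pins down the corresponding skew block). When $\beta\neq0$, the construction of \cref{thm:existence_svd} brings the normal target to the block form \eqref{eq:Td_E}, whose symmetric part is block-diagonal with the repeated block $2D$, hence has eigenvalues $2\gamma_\ell$ each of multiplicity two. Matching these against $\beta\sigma_i^2$ forces the singular values to coincide in pairs, $\sigma_{2\ell-1}=\sigma_{2\ell}$, and delivers $2\gamma_\ell = \beta\,\sigma_{2\ell-1}^2$, which is exactly \eqref{eq:eval_Td_normal}.

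The part I expect to be delicate is not the symmetric block—handled cleanly by $T_d+T_d^\top=\beta WW^\top$—but justifying that the off-diagonal blocks of $T_d$ are precisely $\mp I$, i.e. that every complex eigenvalue of $T_d$ is normalized to have imaginary part $\pm i$. This normalization is inherited from the canonical form fixed in the proof of \cref{thm:existence_svd}, where the skew contribution $\tilde W R\,\tilde W^\top$ is arranged to equal the block with $\mp I$ off the diagonal. I would therefore invoke that construction for the skew part rather than re-derive it, and close by checking consistency of the two parts through the normality of $T_d$, i.e. that its symmetric and skew parts commute, which is what makes the simultaneous matching of eigenvalues and block structure admissible.
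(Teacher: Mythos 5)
Your proposal is correct and arrives at exactly the claims of the corollary, but the key step is organized differently from the paper. The paper's own proof is a two-line pointer to the proof of \Cref{thm:existence_svd}: there, with $U$ built from the eigenvectors of $T_d$, the blockwise condition $X_{1:r}=\Sigma_0\tilde Q_{1:r}\Sigma_0$ yields \eqref{pf:2_cond1}, $\mu\sigma_{2k-1}^2=\mu\sigma_{2k}^2=\gamma_k$ with $\mu=\alpha_2-\tfrac12(\alpha_1+\alpha_2)^2$, from which both cases are read off. You instead derive the global, basis-free identity $T_d+T_d^\top=\beta\,WW^\top$ with $\beta:=2\alpha_2-(\alpha_1+\alpha_2)^2$, obtained from the symmetric part of $P$ in \Cref{thm:matrices} together with $W\mathds{1}=0$; this checks out (equivalently, from \eqref{eq:cond_equ_gen}, $\sum_{i\neq j}w_iw_j^\top=-WW^\top$ when $W\mathds{1}=0$), and it makes the two cases immediate: for $\beta=0$ the symmetric part of $T_d$ vanishes and $WW^\top$ is unconstrained, while for $\beta\neq0$ matching the doubly degenerate spectrum $\{2\gamma_\ell\}$ of the symmetric part of \eqref{eq:Td_E} against $\{\beta\sigma_i^2\}$ forces $\sigma_{2\ell-1}=\sigma_{2\ell}$ and $\gamma_\ell=(\beta/2)\sigma_{2\ell-1}^2$, i.e.\ \eqref{eq:eval_Td_normal}. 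This is arguably more transparent than the paper's cross-reference. Two minor imprecisions worth tightening: (i) in the $\beta=0$ case, "only their product pins down the corresponding skew block" is not quite right for fixed $m$ — by \eqref{pf:2_cond2} the product $\sigma_{2\ell-1}\sigma_{2\ell}$ is tied to $\hat\omega_\ell$, and the full arbitrariness asserted in the corollary relies on the freedom to adjust $\hat\omega_\ell$ (hence $m$) via the interlacing argument; (ii) the pairing step implicitly uses that $WW^\top$ commutes with $T_d$ (which follows from your identity and the normality of $T_d$) and that the relevant eigenvalues of the symmetric part are doubly degenerate, which holds because the off-diagonal blocks $\mp I$ force $\delta_\ell\neq0$ — a point you rightly defer to the construction in \Cref{thm:existence_svd} rather than re-deriving.
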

\begin{proof}
    Following directly from the proof of \Cref{thm:existence_svd} in \Cref{sec:proof_existence_svd}. Specifically, $T_d$ in \eqref{eq:Td_E} satisfies \eqref{eq:cases_E2} and \eqref{eq:eval_Td_normal} corresponds to \eqref{pf:2_cond1}.
\end{proof}
\corrected{
\begin{corollary}
	\label{thm:min_steps}
	The minimal number of steps to approximate a gradient according to \eqref{eq:algo} - \eqref{eq:TM} and using the $T_d$'s in \cref{thm:structure_W_H} and \cref{thm:structure_W_E} is $m = n+1$.
	\begin{proof}
		Since the first $n$ rows of each $T_d$ in \cref{thm:structure_W_H} and \cref{thm:structure_W_E} are linearly independent, we know $\min\{\text{rk}(T_d)\} \ge n$. This implies with \Cref{thm:existence_svd} that $m \ge n + 1$. 
	\end{proof}
\end{corollary}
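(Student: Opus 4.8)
The plan is to prove the stated minimum as a rank-based lower bound, reducing everything to \Cref{thm:existence_svd}. Recall from \Cref{thm:matrices} that $T(W)=WPW^\top$, and that the last row and column of $P$ in \eqref{eq:def_mat} vanish, so $\text{rk}(T(W))\le \text{rk}(P)\le m-1$; hence any exploration matrix $W$ realizing a target $T_d$ needs at least $m\ge \text{rk}(T_d)+1$ steps, which is precisely the lower end of the range guaranteed by \Cref{thm:existence_svd}. It therefore suffices to show that every target matrix occurring in \Cref{thm:structure_W_H} and \Cref{thm:structure_W_E} satisfies $\text{rk}(T_d)\ge n$.

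To this end I would write each admissible $T_d\in\mathbb{R}^{2n\times 2n}$ as a $2\times 2$ array of $n\times n$ blocks and denote its upper-right block by $B$. Inspecting \eqref{eq:cases_H1}--\eqref{eq:cases_H7} and \eqref{eq:cases_E1}--\eqref{eq:cases_E2}, the block $B$ is in every case either $-I$ or of the form $-(I+cQ)$ with $Q$ skew-symmetric and $c\in\mathbb{R}$. The one fact I need is that $I+cQ$ is nonsingular for every skew-symmetric $Q$ and every $c\in\mathbb{R}$: the eigenvalues of $cQ$ are purely imaginary, so $I+cQ$ has no zero eigenvalue. Consequently $B$ is invertible in all cases, and since the first $n$ rows of $T_d$ are exactly the pair of blocks $[A\ \ B]$, the presence of an invertible $B$ already forces these $n$ rows to be linearly independent. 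This gives $\text{rk}(T_d)\ge n$ uniformly over all the families.

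Combining the two observations yields $m\ge \text{rk}(T_d)+1\ge n+1$, the asserted lower bound, and this is consistent with the constructive guarantee of \Cref{thm:existence_svd} (and of \Cref{thm:W_sym} in the skew-symmetric case $2\alpha_2-(\alpha_1+\alpha_2)^2=0$), which realizes $m=\text{rk}(T_d)+1$. I expect the only genuine step to be the uniform invertibility of the upper-right block: for \eqref{eq:cases_H1}--\eqref{eq:cases_H5} and \eqref{eq:cases_E1}--\eqref{eq:cases_E2} it is immediate since $B=-I$, whereas for \eqref{eq:cases_H6} and \eqref{eq:cases_H7} one must invoke the skew-symmetry of $Q$ to rule out a zero eigenvalue of $I+cQ$. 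Everything else is bookkeeping over the finitely many cases.
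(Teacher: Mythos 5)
Your proof is correct and follows essentially the same route as the paper: you show the first $n$ rows of every admissible $T_d$ are linearly independent (via invertibility of the upper-right block $-I$ or $-(I+cQ)$), conclude $\text{rk}(T_d)\ge n$, and combine this with the bound $m\ge \text{rk}(T_d)+1$. Your extra step deriving that bound as a genuine necessity from $T(W)=WPW^\top$ and the vanishing last row/column of $P$ is a welcome tightening of what the paper leaves implicit by citing \Cref{thm:existence_svd}.
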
}
\corrected{
\begin{rem}
	A gradient step is a approximated in $m = n+1$ steps 
	for $T_d$ as specified in \eqref{eq:cases_H2} and $Q$ with elements $\{q_{ij}\}_{i,j=1}^{2n}$ such that
	\begin{align}
	q_{ij} = \begin{cases}
	~~~1\quad \text{ if } i+j = 2n+1,\ i>j \\
	-1\quad \text{ if } i+j = 2n+1,\ i<j \\
	~~~0\quad \text{ else} 
	\end{cases}
	\end{align}
	holds, while the singular values $\sigma_{2\ell-1},\sigma_{2\ell}$ of $W$ satisfy \eqref{pf:2_cond_skew} for $\ell = 1,\ldots,\lceil n/2 \rceil$.
\end{rem}}
%
%
%
%
\subsection{Numerical Results}
\label{sec:numerical_results}
In the following, various simulation results are presented to illustrate the influence of the algorithm parameters, i.e., matrix $T_d$, singular values $\sigma_k$, $k = 1,\ldots,r$, of $W$ and map parameters $\alpha_1,\alpha_2$.  For the sake of visualization, we focus on examples with $n=2$. 
\begin{figure}[t]
	\centering
	\setlength\figureheight{4.5cm}
	\setlength\figurewidth{4.5cm}
	{\small \input{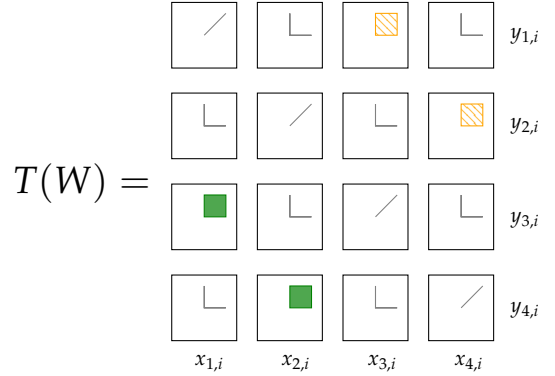}} 
	\caption[]{Generated areas of the exploration sequence $\{w_k\}_{k=0}^{m-1}$ in \eqref{eq:input_coord} for $T(W) \in \mathbb{R}^{2n \times 2n }$ as given in \eqref{eq:TW_pq} for $n = 2$. Hence, $T(W)_{1,3}= T(W)_{2,4} = -1,\  T(W)_{3,1}= T(W)_{4,2} = 1$ and the rest $0$.
	The filled green area ($\areapos$) have a area surface value of $1$, the striped orange area ($\areaneg$) of  $-1$, and the rest of $0$.
	 \correct{The coordinates of each subplot are given by $x_{p,i}$ and $y_{q,i}$ defined in \eqref{eq:area_points} for $p,q=1,\ldots,2n$.}}
	\label{fig:TW_examp}
\end{figure}
\begin{figure*}[t]
	\centering
	\setlength\figureheight{2.8cm}
	\setlength\figurewidth{3.2cm}
	{\small \input{Fig/2D_Lie_1-1_1-1_1_1.tikz}} 
	{\small \input{Fig/2D_Lie_1-5_0-2_1-5_0-2.tikz}} 
	\caption[]{\corrected{An illustration of the algorithm's behavior (setup as in Simulation 1) with $T_d$ as in \eqref{eq:cases_H1} and singular values $[\sigma_1\ \sigma_2\ \sigma_3\ \sigma_4]=[1\ 1\ 1\ 1]$ (top) and $[\sigma_1\ \sigma_2\ \sigma_3\ \sigma_4]=[1.5\ 0.2\ 1.5\ 0.2]$ (bottom) of the exploration matrix $W$. In (a)+(e) the trajectories $x_k$ ({\color{orange90}$\thinmin$}), the first and last $m$ steps ({\color{red}$\boldmin$}), the exact gradient descent algorithm ({\color{blue}$\thinmins\,\thinmins$}), initial state $x_0$ ({\color{green60}$\bullet$}), and optimizer $x^*$ ({\color{green60}$\blacklozenge$}) are depicted. The plots (b)+(f) and (c)+(g) show the exploration sequence $\{u_k\}_{k=0}^{m-1}$ and $\{v_k\}_{k=0}^{m-1}$, respectively, where ({\color{green60}$\bullet$}) is the first and ({\color{orange90}$\cross$}) the second component. In (d)+(h) $T(W)$ in form of the areas generated by $\{w_k\}_{k=0}^{m-1}$ is visualized, where the filled green areas ($\areapos$) have an area surface value of $1$, the striped orange areas ($\areaneg$) of  $-1$, and the rest of $0$.}}
	\label{fig:sim1}
\end{figure*}
\begin{figure*}[t]
	\centering
	\setlength\figureheight{2.8cm}
	\setlength\figurewidth{3.2cm}
	{\small \input{Fig/2D_Lie_2_2.tikz}} 
	{\small \input{Fig/2D_Lie_0-2_0-2.tikz}} 
	\caption[]{\corrected{An illustration of the algorithm's behavior (setup as in Simulation 2) with $T_d$ as in \eqref{eq:cases_H2} and singular values $[\sigma_1\ \sigma_2]=[2\ 2]$ (top) and $[\sigma_1\ \sigma_2]=[0.2\ 0.2]$ (bottom) of the exploration matrix $W$. In (a)+(e) the trajectories $x_k$ ({\color{orange90}$\thinmin$}), the first and last $m$ steps ({\color{red}$\boldmin$}), the exact gradient descent algorithm ({\color{blue}$\thinmins\,\thinmins$}), initial state $x_0$ ({\color{green60}$\bullet$}), and optimizer $x^*$ ({\color{green60}$\blacklozenge$}) are depicted. The plots (b)+(f) and (c)+(g) show the exploration sequence $\{u_k\}_{k=0}^{m-1}$ and $\{v_k\}_{k=0}^{m-1}$, respectively, where ({\color{green60}$\bullet$}) is the first and ({\color{orange90}$\cross$}) the second component. In (d)+(h) $T(W)$ in form of the areas generated by $\{w_k\}_{k=0}^{m-1}$ is visualized, where filled green area ($\areapos$) have an area surface value of $1$, the striped orange area ($\areaneg$) of  $-1$, and the rest of $0$.}}
	\label{fig:sim2}
\end{figure*}
An extensive bench-marking study, including the best choice of parameters for certain classes of the objective, is beyond the scope of this paper and carried out in ongoing and future work. Hence, we keep $f,g,$ (sinusoidal) and $h$ fixed and
provide only a limited number of simulation examples to get some qualitative insight in the degrees of freedom and how they influence the algorithms behavior. 
In the figures below, we show trajectories and the exploration sequences.
In addition, we provide a geometric interpretation and a visualization of the matrix $T(W)$ 
which is of interest by its own and which is explained next. 

The values of the components of $T(W)$ in \eqref{eq:cond_equ_gen} for $[\alpha_1\ \alpha_2] = [\nicefrac{1}{2}\ \nicefrac{1}{2}]$, i.e.,
%
\begin{align}
	T(W)_{pq} = \sum_{i=0}^{m-1}\sum_{j=0}^{i-1} \frac{1}{2} e_p^\top w_i w_i^\top e_q +   e_p^\top w_i w_j^\top e_q , 
	\label{eq:TW_pq}
\end{align}
with \correct{$p,q=1,\ldots,2n$}, where the index $pq$ specifies the element of $T(W)$ in the $p$-th row and $q$-th column,  can be interpreted as the projected areas spanned by the exploration sequences $\{e_p^\top w_\ell\}_{\ell=0}^{m-1},\ \{e_q^\top w_\ell\}_{\ell=0}^{m-1}$. 
The net area $A_{pq}$ of an $n$-sided polygon with corner points $(x_{p,i},y_{q,i}) \in \mathbb{R}^2$, $i=0...n-1$ and \correct{$p,q=1,\ldots,2n$}, known as \textit{Shoelace} or \textit{Gauss} area formula \cite{spivak2018calculus} is obtained as a  special case of Green's Theorem and is given by 
%
\begin{align}
	\correct{A_{pq} = \frac{1}{2}\sum_{i=0}^{n-1} \left(x_{p,i+1}  y_{q,i} - x_{p,i} y_{q,i+1}\right)} 
	\label{eq:area}
\end{align}
with \correct{$x_{p,0} = x_{p,n} = 0$ and $y_{q,0}=y_{q,n} = 0$}. In \Cref{thm:gen_area_skew} (\Cref{sec:pre_lem}) we show that \eqref{eq:area} is equivalent to \eqref{eq:TW_pq} for 
%
\begin{align}
    \correct{x_{p,i} = \sum_{k=0}^{i-1}e_p^\top w_{k},\ y_{q,i} = \sum_{k=0}^{i-1}e_q^\top w_{k}.}
    \label{eq:area_points}
\end{align}
This geometric interpretation is not surprising in the light of 
\Cref{rem:brockett_int2} and the area generating rule appearing in the study of nonholonomic systems \cite{bloch2004nonholonomic}.
In particular, \eqref{eq:area} represents a (double) iterated summation
over the exploration sequence.
In the continuous-time setting, this would correspond to double iterated integrals (or in general $k$-fold iterated integrals, which are called the signature of a path) and which play a fundamental role in nonholonomic control systems.
Note that for $[\alpha_1\ \alpha_2] \neq [\nicefrac{1}{2}\ \nicefrac{1}{2}]$, \eqref{eq:area} with \eqref{eq:area_points} does not hold.
However, we believe it is related to some kind of weighted area.

To illustrate this geometric interpretation, the exploration sequence in \eqref{eq:input_coord} with $[\alpha_1\ \alpha_2] = [\nicefrac{1}{2}\ \nicefrac{1}{2}]$ generates $T(W)$ as depicted in \Cref{fig:TW_examp}, where the singular values of $W$ are $\sigma_1 = \sigma_{2} = \sigma_3 =  \sigma_4 = \sqrt{2}$. Obviously and as presented in the sequel, the singular values of $W$ influence the shape of the areas and give rise to various interpretations of the algorithms behavior. 
\subsubsection*{Simulation 1}
\label{sec:sim1}
In the first simulation setup we consider the objective $J(x) = \|x-[1\ 2]^\top\|^2_2$ and setting $f(J(x)) = \sin(J(x)),\ g(J(x))=\cos(J(x))$, $[\alpha_1\ \alpha_2] = [\nicefrac{1}{2}\ \nicefrac{1}{2}]$, $T_d$ as in \eqref{eq:cases_H1}, $h = 0.05$, and $x_0 = [0\ 1]^\top$. The simulation results for two different choices of singular value pairs are depicted in \Cref{fig:sim1}. On the one hand $\sigma_1=\sigma_2=\sigma_3=\sigma_4=1$ yields $m = 8$ and on the other hand $\sigma_1=\sigma_3=1.5,\ \sigma_2 = \sigma_4= 0.2$ results in $\corrected{m = 21}$. In the latter case the areas with net area value $\pm 1$ have an elongated elliptical shape, which yield a small amplitude of $\{v_k\}_{k=0}^{m-1}$ and therefore a small steady-state amplitude since $\sin(J(x^*))=0$ and $\cos(J(x^*))=1$ (cf. \eqref{eq:TM}). Concluding, the amplitude of $\{e_i^\top w_\ell\}_{\ell=0}^{m-1}$ is proportional to $\sigma_i$ with $i=1,\ldots,2n$ for this choice of $T_d$. 
\subsubsection*{Simulation 2.}
\label{sec:sim2}
In the second simulation study, we consider the same setup as in Simulation 1, but choose $T_d$ as in \eqref{eq:cases_H2} with $Q$ as specified in the proof of \Cref{thm:min_steps}. The simulation results for two different choices of singular values pairs are depicted in \Cref{fig:sim2}. On one hand $\sigma_1 = \sigma_2=2$ results in $m = 4$ and on the other hand $\sigma_1 =  \sigma_2 = 0.2$ in $m = 154$. The singular values can be interpreted as a kind of energy measurement of the exploration sequence, and therefore in the latter case, $m$ is much larger, but reveals a smoother behavior. Moreover, $\sigma_i$ influences the amplitude of $\{e_{2i-1}^\top w_\ell\}_{\ell=0}^{m-1}$ and $\{e_{2i}^\top w_\ell\}_{\ell=0}^{m-1}$ for $i=1,\ldots,n$ for the given choice of $T_d$, compared to the previous simulation example. However, due to space limitations we omit the plots for $\sigma_1 \neq \sigma_2$. 
Interestingly, as observed in \Cref{fig:sim1} and \Cref{fig:sim2} (d)+(h), the areas of $T(W)_{ij}$ for $(i,j) \in \{(1,2),(2,1)\}$ and $(i,j) \in \{(3,4),(4,3)\}$ have the same shape as the last $m$ steps of $x_k$. 
\subsubsection*{Simulation 3.}
\label{sec:sim3}
Consider the same setup as in Simulation 1, but choosing $[\alpha_1\ \alpha_2] = [1\ 0]$ and $T_d$ as described in \eqref{eq:Td_E}. Choosing $\sigma_1 = \sigma_2 = \sigma_3 = \sigma_4 = 1$ results in the behavior depicted in \Cref{fig:sim3} (a)-(c), which is similar to the behavior from \Cref{fig:sim1} (a)-(d), where $m=8$, too. On the one hand, the number of evaluations of the objective $J$ is reduced by half when compared to Simulation 1. On the other hand, in this parameter setup, the choice of singular values is restricted to $\sigma_1 = \sigma_2$ and $\sigma_3 = \sigma_4$, hence, a behavior as in \Cref{fig:sim1} (e)-(h) can not be achieved. However, reducing $\sigma_3 = \sigma_4$ to $0.4$ yields a scaling in the coordinate directions as illustrated in \Cref{fig:sim3} (d)-(f).
\begin{figure*}[t]
	\centering
	\setlength\figureheight{2.8cm}
	\setlength\figurewidth{3.2cm}
	{\small 
%
\definecolor{mycolor1}{rgb}{1.00000,0.64453,0.00000}%
\subfloat{ 
	\begin{tikzpicture}[baseline]
	\pgfplotstableread[col sep = comma]{Fig/csv/2D_a_1_0_s_1_0-9999_contour.csv}\dataContour
	\pgfplotstableread[col sep = comma]{Fig/csv/2D_a_1_0_s_1_0-9999.csv}\dataX
	\pgfplotstableread[col sep = comma]{Fig/csv/2D_a_1_0_s_1_0-9999_first.csv}\dataXf
	\pgfplotstableread[col sep = comma]{Fig/csv/2D_a_1_0_s_1_0-9999_last.csv}\dataXe
	\pgfplotstableread[col sep = comma]{Fig/csv/2D_a_1_0_s_1_0-9999_points.csv}\dataPoints
	\pgfplotstableread[col sep = comma]{Fig/csv/2D_a_1_0_s_1_0-9999_exact.csv}\dataXexact
	\pgfplotstableread[col sep = comma]{Fig/csv/2D_a_1_0_s_1_0-9999_minmax.csv}\dataMinMax
	\GetElement{\dataMinMax}{0}{[index]0}{\xmin}
	\GetElement{\dataMinMax}{0}{[index]1}{\xmax}
	\GetElement{\dataMinMax}{1}{[index]0}{\ymin}
	\GetElement{\dataMinMax}{1}{[index]1}{\ymax}
	\begin{axis}[%
	width=0.951\figurewidth,
	height=\figureheight,
	at={(0\figurewidth,0\figureheight)},
	scale only axis,
	xmin=\xmin,
	xmax=\xmax,
	xlabel style={font=\color{white!15!black},at={(0.5,0.06)}},
	xlabel={$x^{[1]}$},
	ymin=\ymin,
	ymax= \ymax,
	ylabel style={font=\color{white!15!black},at={(0.15,0.5)}},
	ylabel={$x^{[2]}$},
	axis background/.style={fill=white},
	axis x line*=bottom,
	axis y line*=left,
	title = (a)
	]
	\addplot[contour prepared, contour prepared format=matlab, contour/labels=false, contour/draw color=white!80!black] table[x index = {0}, y index = {1}]{\dataContour};
	\addplot [color=mycolor1, forget plot]
		table[x index = {0}, y index = {1}]{\dataX};
	\addplot [color=blue, line width=0.7pt, dashed, forget plot]
		table[x index = {0}, y index = {1}]{\dataXexact};
	\addplot [color=red, line width=1pt, forget plot]
	table[x index = {0}, y index = {1}]{\dataXf};
	\addplot [color=red, line width=1pt, forget plot]
	table[x index = {0}, y index = {1}]{\dataXe};
	\addplot [color=green60, draw=none, mark=diamond*, mark options={solid, fill=green60, green60}, forget plot, select coords between index={1}{1}]
	table[x index = {0}, y index = {1}]{\dataPoints};
	\addplot [color=green60, draw=none, mark=*, mark options={solid, fill=green60, green60}, forget plot, select coords between index={0}{0}]
	table[x index = {0}, y index = {1}]{\dataPoints};
	\end{axis}
	\end{tikzpicture}
}
\hspace*{1.5em}
\definecolor{mycolor1}{rgb}{1.00000,0.64453,0.00000}%
\subfloat{ 
	\begin{tikzpicture}[baseline]
	\pgfplotstableread[col sep = comma]{Fig/csv/2D_a_1_0_s_1_0-9999_W.csv}\dataW
	\GetElement{\dataW}{0}{[index]0}{\xminW}
	\GetElement{\dataW}{0}{[index]1}{\xmaxW}
	\GetElement{\dataW}{0}{[index]2}{\yminW}
	\GetElement{\dataW}{0}{[index]3}{\ymaxW}
	\GetElement{\dataW}{0}{[index]4}{\maxidx}
	\begin{axis}[%
	width=0.951\figurewidth,
	height=\figureheight,
	at={(0\figurewidth,0\figureheight)},
	scale only axis,
	xmin=\xminW,
	xmax=\xmaxW,
	xlabel style={font=\color{white!15!black},at={(0.5,0.06)}},
	xlabel={$k$},
	ymin=\yminW,
	ymax=\ymaxW,
	ylabel style={font=\color{white!15!black},at={(0.15,0.5)}},
	ylabel={$u$},
	axis background/.style={fill=white},
	axis x line*=bottom,
	axis y line*=left,
	xmajorgrids,
	ymajorgrids,
	title = (b)
	]
	\addplot[ycomb, color=green60, line width=0.7pt, mark=*, mark options={solid, fill=green60, green60, mark size=1.5}, forget plot , select coords between index={1}{\maxidx+1}]
	table[x index = {0}, y index = {1}]{\dataW};
	\addplot[ycomb, color=mycolor1, line width=0.7pt, mark=x, mark options={solid, fill=mycolor1, mycolor1, mark size = 2}, forget plot, select coords between index={1}{\maxidx+1}]
	table[x index = {0}, y index = {2}]{\dataW};
	\end{axis}
	\end{tikzpicture}%
}
%
%
\hspace*{1.5em}
%
\definecolor{mycolor1}{rgb}{1.00000,0.64453,0.00000}%
\subfloat{
	\begin{tikzpicture}[baseline]
	\pgfplotstableread[col sep = comma]{Fig/csv/2D_a_1_0_s_1_0-9999_W.csv}\dataW
	\GetElement{\dataW}{0}{[index]0}{\xminW}
	\GetElement{\dataW}{0}{[index]1}{\xmaxW}
	\GetElement{\dataW}{0}{[index]2}{\yminW}
	\GetElement{\dataW}{0}{[index]3}{\ymaxW}
	\GetElement{\dataW}{0}{[index]4}{\maxidx}
	\begin{axis}[%
	width=0.951\figurewidth,
	height=\figureheight,
	at={(0\figurewidth,0\figureheight)},
	scale only axis,
	xmin=\xminW,
	xmax=\xmaxW,
	xlabel style={font=\color{white!15!black},at={(0.5,0.06)}},
	xlabel={$k$},
	ymin=\yminW,
	ymax=\ymaxW,
	ylabel style={font=\color{white!15!black},at={(0.15,0.5)}},
	ylabel={$v$},
	axis background/.style={fill=white},
	axis x line*=bottom,
	axis y line*=left,
	xmajorgrids,
	ymajorgrids,
	title = (c)
	]
	\addplot[ycomb, color=green60, line width=0.7pt, mark=*, mark options={solid, fill=green60, green60, mark size = 1.5}, forget plot, select coords between index={1}{\maxidx}]
		table[x index = {0}, y index = {3}]{\dataW};
	\addplot[ycomb, color=mycolor1, line width=0.7pt, mark=x, mark options={solid, fill=mycolor1, mycolor1, mark size = 2}, forget plot, select coords between index={1}{\maxidx}]
		table[x index = {0}, y index = {4}]{\dataW};
	\end{axis}
	\end{tikzpicture}%
}} 
	{\small 
%
\definecolor{mycolor1}{rgb}{1.00000,0.64453,0.00000}%
\subfloat{ 
	\begin{tikzpicture}[baseline]
	\pgfplotstableread[col sep = comma]{Fig/csv/2D_a_1_0_s_1_0-4_contour.csv}\dataContour
	\pgfplotstableread[col sep = comma]{Fig/csv/2D_a_1_0_s_1_0-4.csv}\dataX
	\pgfplotstableread[col sep = comma]{Fig/csv/2D_a_1_0_s_1_0-4_first.csv}\dataXf
	\pgfplotstableread[col sep = comma]{Fig/csv/2D_a_1_0_s_1_0-4_last.csv}\dataXe
	\pgfplotstableread[col sep = comma]{Fig/csv/2D_a_1_0_s_1_0-4_points.csv}\dataPoints
	\pgfplotstableread[col sep = comma]{Fig/csv/2D_a_1_0_s_1_0-4_exact.csv}\dataXexact
	\pgfplotstableread[col sep = comma]{Fig/csv/2D_a_1_0_s_1_0-4_minmax.csv}\dataMinMax
	\GetElement{\dataMinMax}{0}{[index]0}{\xmin}
	\GetElement{\dataMinMax}{0}{[index]1}{\xmax}
	\GetElement{\dataMinMax}{1}{[index]0}{\ymin}
	\GetElement{\dataMinMax}{1}{[index]1}{\ymax}
	\begin{axis}[%
	width=0.951\figurewidth,
	height=\figureheight,
	at={(0\figurewidth,0\figureheight)},
	scale only axis,
	xmin=\xmin,
	xmax=\xmax,
	xlabel style={font=\color{white!15!black},at={(0.5,0.06)}},
	xlabel={$x^{[1]}$},
	ymin=\ymin,
	ymax= \ymax,
	ylabel style={font=\color{white!15!black},at={(0.15,0.5)}},
	ylabel={$x^{[2]}$},
	axis background/.style={fill=white},
	axis x line*=bottom,
	axis y line*=left,
	title = (d)
	]
	\addplot[contour prepared, contour prepared format=matlab, contour/labels=false, contour/draw color=white!80!black] table[x index = {0}, y index = {1}]{\dataContour};
	\addplot [color=mycolor1, forget plot]
		table[x index = {0}, y index = {1}]{\dataX};
	\addplot [color=blue, line width=0.7pt, dashed, forget plot]
		table[x index = {0}, y index = {1}]{\dataXexact};
	\addplot [color=red, line width=1pt, forget plot]
	table[x index = {0}, y index = {1}]{\dataXf};
	\addplot [color=red, line width=1pt, forget plot]
	table[x index = {0}, y index = {1}]{\dataXe};
	\addplot [color=green60, draw=none, mark=diamond*, mark options={solid, fill=green60, green60}, forget plot, select coords between index={1}{1}]
	table[x index = {0}, y index = {1}]{\dataPoints};
	\addplot [color=green60, draw=none, mark=*, mark options={solid, fill=green60, green60}, forget plot, select coords between index={0}{0}]
	table[x index = {0}, y index = {1}]{\dataPoints};
	\end{axis}
	\end{tikzpicture}
}
\hspace*{1.5em}
\definecolor{mycolor1}{rgb}{1.00000,0.64453,0.00000}%
\subfloat{ 
	\begin{tikzpicture}[baseline]
	\pgfplotstableread[col sep = comma]{Fig/csv/2D_a_1_0_s_1_0-4_W.csv}\dataW
	\GetElement{\dataW}{0}{[index]0}{\xminW}
	\GetElement{\dataW}{0}{[index]1}{\xmaxW}
	\GetElement{\dataW}{0}{[index]2}{\yminW}
	\GetElement{\dataW}{0}{[index]3}{\ymaxW}
	\GetElement{\dataW}{0}{[index]4}{\maxidx}
	\begin{axis}[%
	width=0.951\figurewidth,
	height=\figureheight,
	at={(0\figurewidth,0\figureheight)},
	scale only axis,
	xmin=\xminW,
	xmax=\xmaxW,
	xlabel style={font=\color{white!15!black},at={(0.5,0.06)}},
	xlabel={$k$},
	ymin=\yminW,
	ymax=\ymaxW,
	ylabel style={font=\color{white!15!black},at={(0.15,0.5)}},
	ylabel={$u$},
	axis background/.style={fill=white},
	axis x line*=bottom,
	axis y line*=left,
	xmajorgrids,
	ymajorgrids,
	title = (e)
	]
	\addplot[ycomb, color=green60, line width=0.7pt, mark=*, mark options={solid, fill=green60, green60, mark size=1.5}, forget plot , select coords between index={1}{\maxidx+1}]
	table[x index = {0}, y index = {1}]{\dataW};
	\addplot[ycomb, color=mycolor1, line width=0.7pt, mark=x, mark options={solid, fill=mycolor1, mycolor1, mark size = 2}, forget plot, select coords between index={1}{\maxidx+1}]
	table[x index = {0}, y index = {2}]{\dataW};
	\end{axis}
	\end{tikzpicture}%
}
%
%
\hspace*{1.5em}
%
\definecolor{mycolor1}{rgb}{1.00000,0.64453,0.00000}%
\subfloat{
	\begin{tikzpicture}[baseline]
	\pgfplotstableread[col sep = comma]{Fig/csv/2D_a_1_0_s_1_0-4_W.csv}\dataW
	\GetElement{\dataW}{0}{[index]0}{\xminW}
	\GetElement{\dataW}{0}{[index]1}{\xmaxW}
	\GetElement{\dataW}{0}{[index]2}{\yminW}
	\GetElement{\dataW}{0}{[index]3}{\ymaxW}
	\GetElement{\dataW}{0}{[index]4}{\maxidx}
	\begin{axis}[%
	width=0.951\figurewidth,
	height=\figureheight,
	at={(0\figurewidth,0\figureheight)},
	scale only axis,
	xmin=\xminW,
	xmax=\xmaxW,
	xlabel style={font=\color{white!15!black},at={(0.5,0.06)}},
	xlabel={$k$},
	ymin=\yminW,
	ymax=\ymaxW,
	ylabel style={font=\color{white!15!black},at={(0.15,0.5)}},
	ylabel={$v$},
	axis background/.style={fill=white},
	axis x line*=bottom,
	axis y line*=left,
	xmajorgrids,
	ymajorgrids,
	title = (f)
	]
	\addplot[ycomb, color=green60, line width=0.7pt, mark=*, mark options={solid, fill=green60, green60, mark size = 1.5}, forget plot, select coords between index={1}{\maxidx}]
		table[x index = {0}, y index = {3}]{\dataW};
	\addplot[ycomb, color=mycolor1, line width=0.7pt, mark=x, mark options={solid, fill=mycolor1, mycolor1, mark size = 2}, forget plot, select coords between index={1}{\maxidx}]
		table[x index = {0}, y index = {4}]{\dataW};
	\end{axis}
	\end{tikzpicture}%
}} 
	\caption[]{\corrected{An illustration of the algorithm's behavior (setup as in Simulation 3) with $T_d$ as in \eqref{eq:Td_E} with singular values $[\sigma_1\ \sigma_2\ \sigma_3\ \sigma_4]=[1\ 1\ 1\ 1]$ (top) and $[\sigma_1\ \sigma_2\ \sigma_3\ \sigma_4]=[1\ 1\ 0.4\ 0.4]$ (bottom) of the exploration matrix $W$. In (a)+(d) the trajectories $x_k$ ({\color{orange90}$\thinmin$}), the first and last $m$ steps ({\color{red}$\boldmin$}), the exact gradient descent algorithm ({\color{blue}$\thinmins\,\thinmins$}), initial state $x_0$ ({\color{green60}$\bullet$}), and optimizer $x^*$ ({\color{green60}$\blacklozenge$}) are depicted. The plots (b)+(e) and (c)+(f) show the exploration sequence $\{u_k\}_{k=0}^{m-1}$ and $\{v_k\}_{k=0}^{m-1}$, respectively, where ({\color{green60}$\bullet$}) is the first and ({\color{orange90}$\cross$}) the second component.}}
	\label{fig:sim3}
\end{figure*}
\subsubsection*{Simulation 4.}
\label{sec:sim4}
\corrected{
In this simulation scenario we consider the cost function $J(x) = \|x-[1\ 2]^\top + 0.5\sin(10\pi x)\|^2$, i.e., an objective with many local minima. We define $f(J(x))=\sin(J(x)),\ g(J(x)) = \cos(J(x)),\ [\alpha_1\ \alpha_2] = [\nicefrac{1}{2}\ \nicefrac{1}{2}],\ h=0.05$, and $x_0 = [1\ 2]^\top$. The simulation results with $T_d$ as in Simulation 2 and $\sigma_1= \sigma_2 =  1$ are depicted in \Cref{fig:sim4}. As motivated in the introduction, the proposed algorithm is able to overcome local minima and converges into a neighborhood of the global minimum. Due to the definition of the maps $M_k^{\sqrt{h}}$, the gradient of $J$ is gained by a procedure similar to numerical integration. It has been observed in simulations that this procedure is numerically more stable than numerical differentiation (finite differences).
In summary, this integrating behavior has the effect to even and flatten out local minima and noise respectively (cf. \cite{wildhagen2018characterizing}).}
\subsubsection*{Simulation 5.}
\label{sec:sim5}
\corrected{One of the key advantages of derivative-free optimization is demonstrated, namely to deal with non-smooth objectives. To this end, in its ability to a simulation experiment, the cost function is set to $J(x) = \|x\|$. We use the same algorithm parameters as in Simulation 2, i.e., $f(J(x))=\sin(J(x)),\ g(J(x)) = \cos(J(x)),\ [\alpha_1\ \alpha_2] = [\nicefrac{1}{2}\ \nicefrac{1}{2}],\ h=0.05$, and $x_0 = [1\ 2]^\top$ with $T_d$ as in \eqref{eq:cases_H2}, $Q$ specified in \cref{thm:min_steps}, and singular values $\sigma_1 = \sigma_2 =  0.4$. The algorithm's behavior is depicted in \cref{fig:sim5}. Since no first-order information (derivatives) needs to be competed, the algorithm is, as expected, converging to the local minima.}
\begin{rem}
	It is worth to mention that a very promising generating function class is 
	\begin{align}
	f(z) = \sqrt{z}	\sin(\text{ln}(z)\mu),\ g(z) = \sqrt{z}	\cos(\text{ln}(z)\mu)
	\end{align}
	with $\mu \in \mathbb{R}_{>0}$, which belong to the setting of \eqref{eq:cases_H1} and \eqref{eq:cases_E1} (with some adaptions). Specifically, the Lie bracket between the generating functions results in $[f,g](z) = -\mu$, i.e., it holds $x_{m+k} = x_k - h\mu\nabla J(x_k) + \mathcal{O}(h^{3/2})$. In this view, $\mu$ can be chosen large and $h$ small, hence, a large enough gradient step is executed while the oscillations can be kept small.
\end{rem}
\subsection{Exploration Sequences via Nonlinear Programming}
Besides the construction of the exploration sequence as described in the proof of \cref{thm:existence_svd} in \cref{sec:proof_existence_svd} and the step-by-step construction of $W$ in \cref{sec:construction_svd} one can compute a sequence with nonlinear programming
by solving the constrained optimization problem
\begin{align}
	\begin{split}
		\min~&~ \correct{\|\text{vec}(W)\|_p^2} \\
		\text{s.t.}~&~ T(W) = T_d \\
		&~ W\mathds{1} = 0
	\end{split}
	\label{eq:nlp}
\end{align}
Instead of the $p-$norm of $W \in \mathbb{R}^{2n\times m}$, one can in principle choose any other objective function, for example
a weighted norm etc. 
In contrast to our constructive approach, the sequence length $m$ with $m\ge \text{rk}(T_d)+1$ has to be specified in \eqref{eq:nlp}. 
An approximation of the lower bound of $m$ can be computed by following Step 4 of the step-by-step construction procedure for the exploration sequence matrix (see \cref{sec:construction_svd}) and choosing the singular values such that $m$ is minimal. 

\section{CONCLUSION}\label{sec:conclusions}

In this work, we proposed a novel class of derivative-free optimization algorithms.
The idea was to approximate the gradient of the objective function
by a $m$-fold composition of maps. These maps are defined by exploration sequences and
generating functions. We provided a general framework for the construction of those ingredients.
In particular, the construction of exploration sequences
is related to nonholonomic state-transition problems and 
is based on solving a system of quadratic
equations, which we encountered by a singular value decomposition (see \Cref{thm:existence_svd}).
The characterization of the generating functions was carried out by solving the 
functional equation \eqref{eq:problem} (see \Cref{thm:structure_W_H} and \Cref{thm:structure_W_E}).
Numerical simulations and a qualitative study of the dynamics of the algorithm 
were presented and the role of the algorithm parameters on the behavior
of the algorithm was discussed. It turned out that the singular values of
the exploration sequence matrix play a crucial role.
Due to space limitations, we leave an extensive benchmarking study and comparisons with other derivative-free optimization algorithms for follow up work. 
The tuning of the algorithm parameters---which entails a choice of exploration sequences and generating functions, and balancing exploration and exploitation by
proper step size rules or line search methods---requires systematic and intensive testing for suitable classes of objective functions, which is beyond the scope of this paper. 
Eventually, the tuning of the parameters can be approached, for example, by learning exploration sequences  based  on a training set of relevant objective functions using an hyperparameter optimization approach.
Another future research direction is the extension of the proposed algorithm to extremum seeking problems (cf. \cite{jfESNCM}) for the two-point algorithmic scheme. 
Finally, designing exploration sequences plays a key role in our algorithms. This corresponds to the problem of finding sequences such that the first and second iterated summations, i.e. the one- and two-dimensional projected areas,
have the values specified on the right hand sides of \eqref{eq:svd_prob}. 
To the best of our knowledge, a \emph{general} and \emph{algorithmic} characterization of solutions to this inverse problem (i.e. given signature values and find corresponding paths) is not known and in our opinion it is an interesting mathematical research question by its own \cite{lyons2018inverting}.
\begin{figure}[h]
	\vspace*{-1em}
	\centering
	\includegraphics[width=0.9\textwidth]{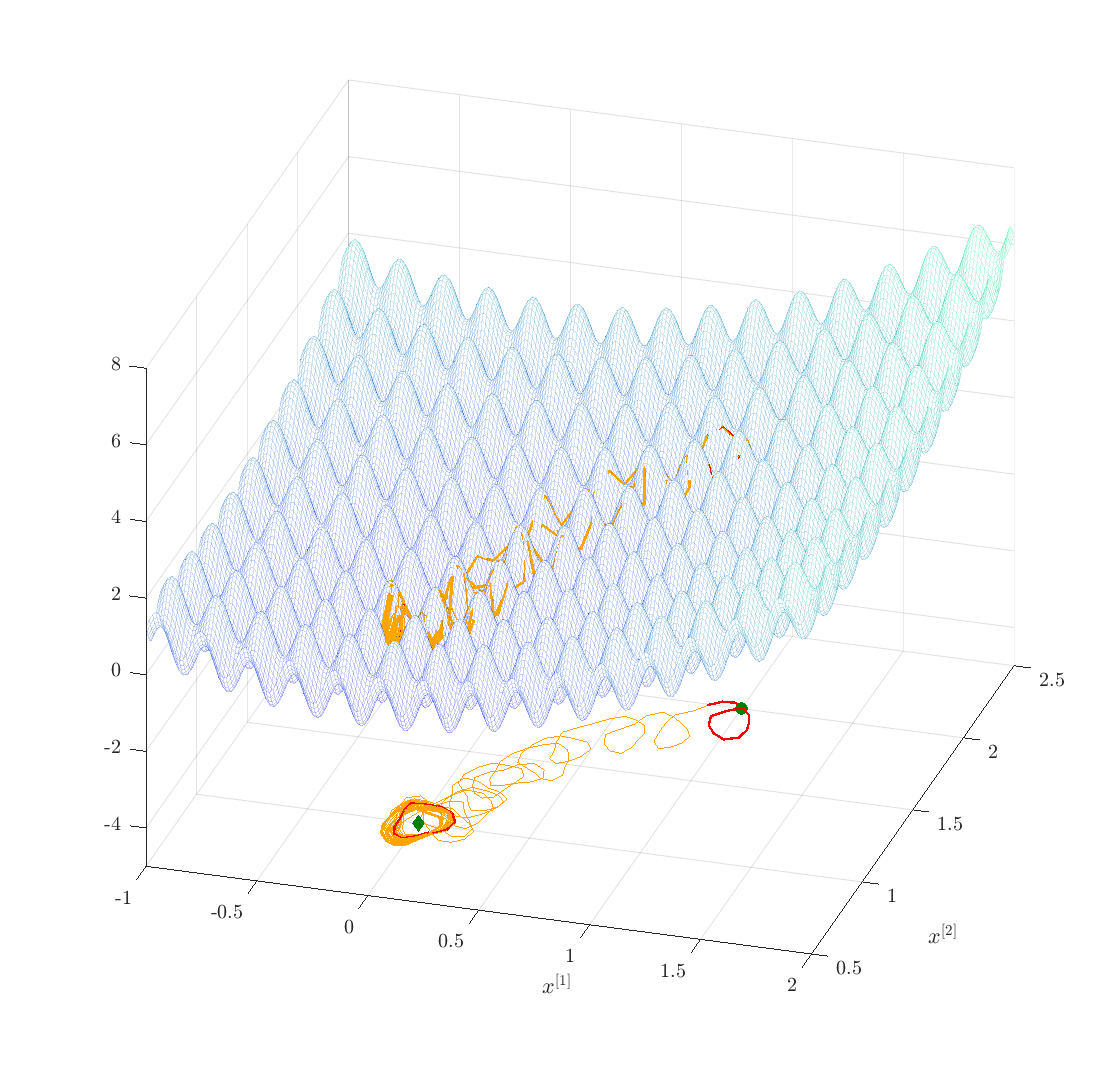} 
	\caption[]{\corrected{An illustration of the algorithms behavior (setup as in Simulation 4) for $T_d$ as in \eqref{eq:cases_H2} and singular values $[\sigma_1\ \sigma_2] = [1\ 1]$ of exploration matrix $W$. The trajectories $x_k$ ({\color{orange90}$\thinmin$}), the first and last $m$ steps ({\color{red}$\boldmin$}), the initial state $x_0$ ({\color{green60}$\bullet$}), and the optimizer $x^*$ ({\color{green60}$\blacklozenge$}) are depicted.}}
	\label{fig:sim4}
\end{figure}
\begin{figure}[h]
	\vspace*{-1em}
	\centering
	\includegraphics[width=0.9\textwidth]{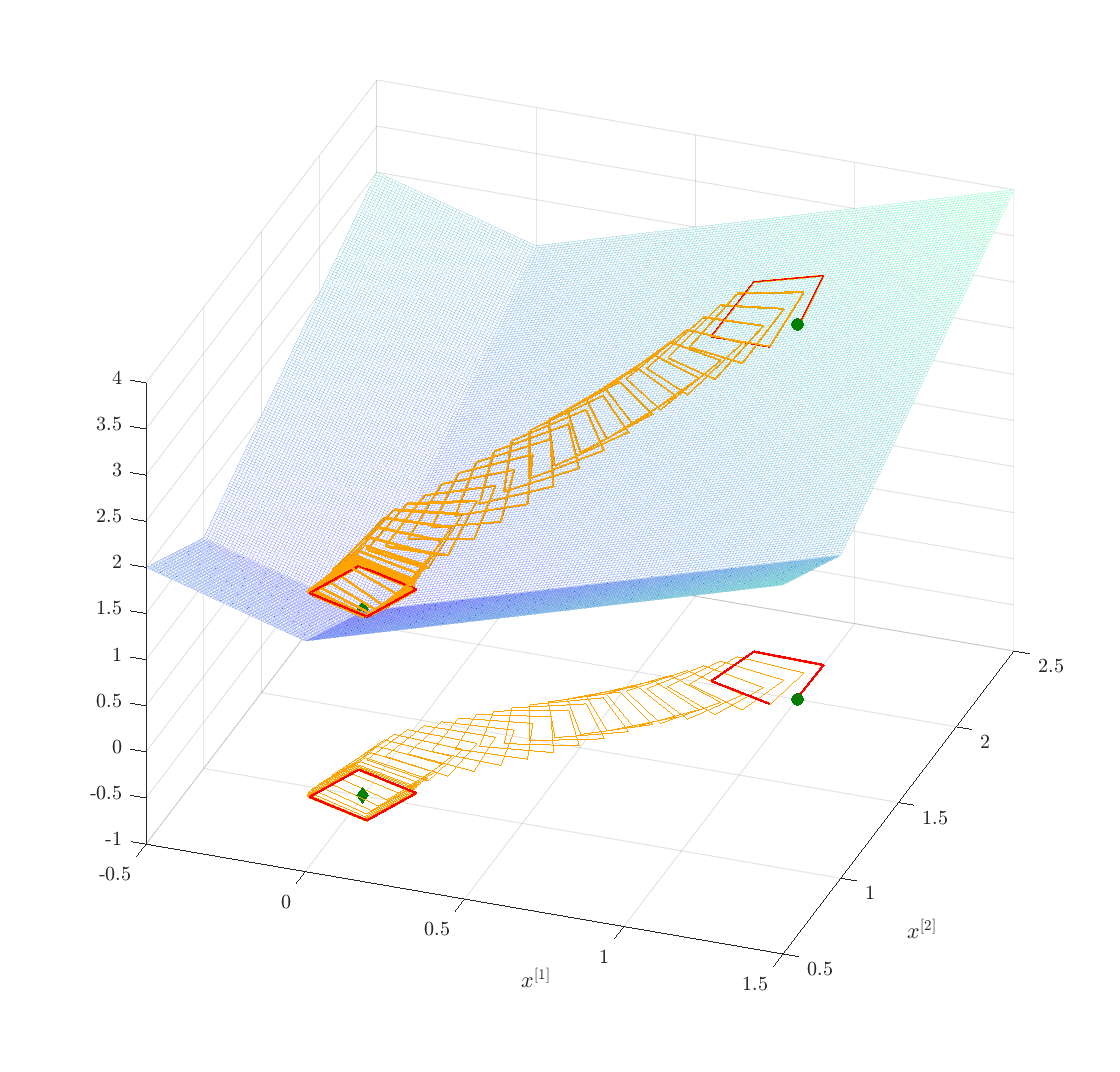} 
	\caption[]{\corrected{An illustration of the algorithm's behavior (setup as in Simulation 5) for $T_d$ as in \eqref{eq:cases_H2} and singular values $[\sigma_1\ \sigma_2] = [2\ 2]$ of exploration matrix $W$. The trajectories $x_k$ ({\color{orange90}$\thinmin$}), the first and last $m$ steps ({\color{red}$\boldmin$}), the initial state $x_0$ ({\color{green60}$\bullet$}), and the optimizer $x^*$ ({\color{green60}$\blacklozenge$}) are depicted.}}
	\label{fig:sim5}
\end{figure}

\bibliographystyle{IEEEtran}
\bibliography{IEEEabrv,bibfile}

\appendix
\section{PRELIMINARY LEMMAS}
\begin{lem}
	\label{thm:taylor}
	Let $a \in C^2(\mathbb{R}^n;\mathbb{R})$, $b\in C^0(\mathbb{R}^p; \mathbb{R}^n)$, and $h\in \mathbb{R}_{\ge 0}$.
	Then for any compact convex set $\mathcal{Z}\subseteq\mathbb{R}^n$ and any compact set $\mathcal{Y}\subseteq\mathbb{R}^p$ 
	there exist a $R \in C^0(\mathbb{R}^n \times \mathbb{R}^p \times \mathbb{R}_{\ge 0}; \mathbb{R})$ and a $M\in\mathbb{R}_{\ge 0}$ such that for all $z,\ z+hb(y)\in\mathcal{Z}$ and $y \in \mathcal{Y}$ we have
	\begin{align}
	a(z + hb(y)) &= a(z) + h \frac{\partial a}{\partial z}(z)^\top b(y) + R(z,y;h^2)
	\label{eq:taylor}
	\end{align}
	with $|R(z,y;h^2)| \le M h^2$, i.e., $\lim_{h \rightarrow 0} R(z,y;h^2)=0$.
\end{lem}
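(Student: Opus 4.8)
The plan is to exhibit $R$ explicitly as the exact first-order Taylor remainder and then bound it uniformly using compactness. First I would define
\[
R(z,y;h^2) := a(z+hb(y)) - a(z) - h\,\frac{\partial a}{\partial z}(z)^\top b(y),
\]
so that the claimed identity \eqref{eq:taylor} holds by construction. Since $a$, $b$, and $\frac{\partial a}{\partial z}$ are all continuous, $R$ is a composition and difference of continuous functions and is therefore continuous on $\mathbb{R}^n\times\mathbb{R}^p\times\mathbb{R}_{\ge 0}$, which delivers the regularity assertion $R\in C^0$ for free.

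The key analytic step is the integral form of Taylor's theorem. Because $\mathcal{Z}$ is \emph{convex} and both $z$ and $z+hb(y)$ lie in $\mathcal{Z}$, the entire segment $\{z+thb(y): t\in[0,1]\}$ lies in $\mathcal{Z}$, so $a$ is twice continuously differentiable along it. Applying the second-order Taylor expansion with integral remainder to the scalar map $t\mapsto a(z+thb(y))$ and collecting the $h^2$ factors yields the representation
\[
R(z,y;h^2) = h^2\int_0^1 (1-t)\, b(y)^\top \frac{\partial^2 a}{\partial z^2}\big(z+thb(y)\big)\, b(y)\, dt.
\]

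It then remains to bound this uniformly. Since $a\in C^2$, its Hessian is continuous and hence bounded on the compact set $\mathcal{Z}$; I would set $C_H := \sup_{\zeta\in\mathcal{Z}}\|\frac{\partial^2 a}{\partial z^2}(\zeta)\|$. Since $b$ is continuous and $\mathcal{Y}$ is compact, I would set $C_b := \sup_{y\in\mathcal{Y}}\|b(y)\|$. Using that $z+thb(y)\in\mathcal{Z}$ for every $t\in[0,1]$, the Cauchy--Schwarz estimate gives
\[
|R(z,y;h^2)| \le h^2\,C_b^2\,C_H\int_0^1 (1-t)\, dt = \tfrac{1}{2}\,C_b^2\,C_H\, h^2,
\]
so that $M := \tfrac{1}{2}C_b^2 C_H$ works, and the limit $R(z,y;h^2)\to 0$ as $h\to 0$ is immediate.

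The only genuine subtlety, and the step I would be most careful about, is the \emph{uniformity} of the bound: the constants $C_H$ and $C_b$ must depend only on $\mathcal{Z}$ and $\mathcal{Y}$ and not on the particular pair $(z,y)$. This is exactly what compactness supplies for $C_b$ and for the supremum over $\mathcal{Z}$, while convexity of $\mathcal{Z}$ is what guarantees that all intermediate evaluation points $z+thb(y)$ remain inside $\mathcal{Z}$, so that the single constant $C_H$ controls the integrand along every admissible segment. No other step presents difficulty; this lemma is essentially Taylor's theorem rendered uniform over compact sets.
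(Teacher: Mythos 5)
Your proof is correct and follows essentially the same route as the paper: a second-order Taylor expansion along the segment from $z$ to $z+hb(y)$ (the paper uses the Lagrange form of the remainder where you use the integral form, an immaterial difference), with convexity of $\mathcal{Z}$ guaranteeing that the intermediate points stay in $\mathcal{Z}$ and compactness of $\mathcal{Z}$ and $\mathcal{Y}$ supplying the uniform bounds on the Hessian of $a$ and on $b$. Your explicit treatment of the continuity of $R$ is a small point the paper leaves implicit, but otherwise the two arguments coincide.
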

\begin{proof}
	Equation \eqref{eq:taylor} is obtained by applying Taylor's theorem \cite[Theorem 5.15, p. 110]{rudin1964principles} up to degree two, thus, there exits a $\theta \in [0,1]$ such that
	\begin{align}
	a(z + hb(y)) &= a(z) + h \frac{\partial a}{\partial z}(z)^\top b(y)  \nonumber \\ &+\frac{h^2}{2} b(y)^\top \frac{\partial^2 a}{\partial z}(\bar{x}) b(y) 
	\end{align}
	holds with $\bar{x} = z + \theta h b(y)$. The term 
	\begin{align}
	R(z,y;h^2) = h^2 b(y)^\top \frac{\partial^2 a}{\partial^2 z}(\bar{x}) b(y)
	\label{eq:tayler_R}
	\end{align}
	is the Lagrange remainder where $\nicefrac{\partial^2 a}{\partial^2 z}(\cdot)$ is the Hessian of $a(\cdot)$.
	Since $\mathcal{Z}$  and $\mathcal{Y}$ are compact and $b \in C^0(\mathbb{R}^p;\mathbb{R}^n)$ there exists a $M_b \in \mathbb{R}_{\ge  0}$ such that
	\begin{align}
	\|b(y)\|_2 \le M_b,\quad y\in \mathcal{Y}
	\end{align}
	holds. Furthermore, since $\mathcal{Z}$ is convex and compact and $a\in C^2(\mathbb{R}^n;\mathbb{R})$ there exists a $M_a \in \mathbb{R}_{>0}$ such that 
	\begin{align}
	\|\frac{\partial^2 a}{\partial z^2}(\bar{x})\|_2 \le M_a,\quad \bar{x} \in \mathcal{Z}.
	\end{align}
	Finally, $|R(z,y;h^2)| \le M h^2$ with $M=M_b^2M_a$.  
\end{proof}
\label{sec:pre_lem}
\begin{lem}
\label{thm:taylor_gen}
Let Assumption \ref{assump:smooth_fg} hold true.
Moreover, let $\mathcal{X} \subseteq \mathbb{R}^n$ and $\mathcal{J} \subseteq \mathbb{R}$
be compact convex sets, and $m\in\mathbb{N}_{\ge 1}$.
Then there exist a function $R_{k+m-1}:\mathbb{R}^n\times\mathbb{R}\times\mathbb{R}_{\ge 0} \rightarrow \mathbb{R}^n$ and a constant $M_{k+m-1} \in \mathbb{R}_{\ge 0}$ such that 
for any iterates $x_k,\ldots,x_{k+m}$ of the algorithm \eqref{eq:algo} with maps in \eqref{eq:TM}, $x_t,x_t+\sqrt{h} s_k(J(x_t)) \in \mathcal{X}$, and $J(x_t),J(x_t + \sqrt{h} s_k(J(x_t))) \in \mathcal{J}$ for $t=k,\ldots,k+m$, and
we have
\begin{align}
&x_{k+m} = x_k + \sqrt{h} (\alpha_1+\alpha_2) \sum_{i=k}^{k+m-1} s_i(J(x_k)) \nonumber \\
&+ h\alpha_2\sum_{i=k}^{k+m-1} \frac{\partial s_i}{\partial J}(J(x_k)) s_i(J(x_k))^\top \nabla J(x_k)
\nonumber \\
&+ h(\alpha_1+\alpha_2)^2\!\sum_{i=k}^{k+m-1} \sum_{j=k}^{i-1} \frac{\partial s_i}{\partial J}(J(x_k))   s_j(J(x_k))\!^\top\! \nabla J(x_k) \nonumber \\
&+R_{m-1}(x_k,J(x_k);h^{3/2}), 
\label{eq:taylor_gen}
\end{align}
with  $\|R_{k+m-1}(x_k,J(x_k);h^{3/2})\|_2 \le M_{k+m-1} h^{3/2}$, i.e., 
$R_{k+m-1}(x_k,J(x_k);h^{3/2}) = \mathcal{O}(h^{3/2})$. 
\end{lem}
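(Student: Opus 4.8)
The plan is to prove the expansion by induction on $m$, with \Cref{thm:taylor} as the only analytic tool and the compactness/smoothness hypotheses serving to keep every remainder a uniform $\mathcal{O}(h^{3/2})$. First I would establish the single-step ($m=1$) case. Writing $s_k := s_k(J(x_k))$, the inner evaluation point in \eqref{eq:TM} is $x_k+\sqrt{h}s_k$, which differs from $x_k$ by $\mathcal{O}(\sqrt{h})$. Applying \Cref{thm:taylor} to the $C^2$ function $J$ with increment $\sqrt{h}s_k$ gives $J(x_k+\sqrt{h}s_k)=J(x_k)+\sqrt{h}\nabla J(x_k)^\top s_k+\mathcal{O}(h)$. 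Since $s_k(\cdot)=f(\cdot)u_k+g(\cdot)v_k$ is $C^2$ in its scalar argument by \ref{assump:smooth_fg}, a second application of \Cref{thm:taylor} (now in the scalar variable $J$) yields
\[
s_k\bigl(J(x_k+\sqrt{h}s_k)\bigr)=s_k(J(x_k))+\sqrt{h}\,\tfrac{\partial s_k}{\partial J}(J(x_k))\,s_k(J(x_k))^\top\nabla J(x_k)+\mathcal{O}(h).
\]
Substituting into \eqref{eq:TM} and collecting powers of $\sqrt{h}$ produces the claimed formula for $m=1$ (the double sum being empty), with remainder $\mathcal{O}(h^{3/2})$.

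For the induction step, assume the formula holds after $m$ steps from $x_k$. In particular it implies $x_{k+m}-x_k=\sqrt{h}(\alpha_1+\alpha_2)\sum_{i=k}^{k+m-1}s_i(J(x_k))+\mathcal{O}(h)=\mathcal{O}(\sqrt{h})$. I would apply the single-step expansion (with $k$ replaced by $k+m$) to write $x_{k+m+1}$ in terms of $x_{k+m}$ and $J(x_{k+m})$, and then re-expand every quantity about $x_k$. The crucial move is to expand $s_{k+m}(J(x_{k+m}))$ around $J(x_k)$: using $J(x_{k+m})-J(x_k)=\sqrt{h}(\alpha_1+\alpha_2)\nabla J(x_k)^\top\sum_{i=k}^{k+m-1}s_i(J(x_k))+\mathcal{O}(h)$ together with the smoothness of $s_{k+m}$, the leading term $\sqrt{h}(\alpha_1+\alpha_2)s_{k+m}(J(x_{k+m}))$ splits into the new $\sqrt{h}$ contribution $\sqrt{h}(\alpha_1+\alpha_2)s_{k+m}(J(x_k))$ plus the $h(\alpha_1+\alpha_2)^2$ cross terms $\sum_{j=k}^{k+m-1}\tfrac{\partial s_{k+m}}{\partial J}(J(x_k))\,s_j(J(x_k))^\top\nabla J(x_k)$, i.e.\ exactly the $i=k+m$ slice of the double sum. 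The single-step quadratic term is already $\mathcal{O}(h)$, so replacing $J(x_{k+m})$ by $J(x_k)$ inside it costs only $\mathcal{O}(h^{3/2})$ and yields the $i=k+m$ diagonal term. Inserting the inductive hypothesis for $x_{k+m}$ then merges the three families of sums into $\sum_{i=k}^{k+m}$, completing the step.

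The main obstacle will be the remainder bookkeeping rather than the algebra. I must check that every error term is a genuinely uniform $\mathcal{O}(h^{3/2})$ over the relevant compact sets; this is precisely where the hypotheses $x_t,\,x_t+\sqrt{h}s_k(J(x_t))\in\mathcal{X}$ and $J(x_t),\,J(x_t+\sqrt{h}s_k(J(x_t)))\in\mathcal{J}$ for $t=k,\dots,k+m$ enter, guaranteeing that \Cref{thm:taylor} applies with a single constant at every iterate. The two-fold composition of Taylor expansions (first in $x$ through $J$, then in the scalar $J$ through $s$) must be arranged so that an $\mathcal{O}(\sqrt{h})$ displacement of the argument contributes an $\mathcal{O}(h)$ error to an $\mathcal{O}(\sqrt{h})$-scaled term, hence $\mathcal{O}(h^{3/2})$ overall. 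Finally, since the number of accumulated remainders is $m$, a fixed $h$-independent count, their sum remains $\mathcal{O}(h^{3/2})$, with the constant $M_{k+m-1}$ depending only on $m$ and on bounds for $f,g,J$ and their first and second derivatives over $\mathcal{X}$ and $\mathcal{J}$.
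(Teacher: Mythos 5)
Your proposal is correct and follows essentially the same route as the paper: induction on the number of steps, with the base case obtained by two applications of \Cref{thm:taylor} (first to $J$ at the shifted point, then to $s_k$ in its scalar argument), and the inductive step obtained by expanding $J(x_{k+m})$ and $s_{k+m}(J(x_{k+m}))$ about $x_k$ and $J(x_k)$ so that the $\sqrt{h}$-displacement generates precisely the new diagonal and cross terms, everything else being absorbed into a uniformly bounded $\mathcal{O}(h^{3/2})$ remainder via the compactness hypotheses. The only cosmetic difference is that the paper expands the constituents of the final transition map directly around $x_0$ with explicit named remainders, whereas you first invoke the one-step case at $x_{k+m}$ and then re-center; the bookkeeping is equivalent.
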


\begin{proof}
W.l.o.g we set $k=0$, i.e., we show by induction that the $m$-step evolution of \eqref{eq:algo} with transition map \eqref{eq:TM} is give by \eqref{eq:taylor_gen} with $k=0$. 
Similarly to  $R(\cdot,\cdot;\cdot)$ in \eqref{eq:taylor},
we introduce the following notation of the
\textit{Taylor remainder (T.R.)} terms for  $k=0,\ldots,m-1$:
\begin{itemize}[leftmargin=*]
	\item $R_{J,k}(\cdot,\cdot;h)$ of \textit{T.R.} of $J(x_k)$ 
	\item $R_{s,k}(\cdot,\cdot;h)$ of \textit{T.R.} of $s_k(J(x_k))$
	\item $R^+_{J,k}(\cdot,\cdot;h)$ of \textit{T.R.} of $J(x_k+\sqrt{h}s_k(J(x_k)))$ 
	\item $R^+_{s,k}(\cdot,\cdot;h)$ of \textit{T.R.} of $s_k(J(x_k+\sqrt{h}s_k(J(x_k))))$
\end{itemize}
and  aggregated remainders with terms of order $h$ or $h^{3/2}$ and higher for $k=0,\ldots,m-1$:
\begin{itemize}[leftmargin=*]
	\item $R_k(\cdot,\cdot;h^{3/2})$ of $x_k$
	\item $\bar{R}_{J,k}(\cdot,\cdot;h)$ of $J(x_k)$ 
	\item $\bar{R}_{s,k}(\cdot,\cdot;h)$ of $s_k(J(x_k))$
	\item $\bar{R}^+_{J,k}(\cdot,\cdot;h)$ of $J(x_k+\sqrt{h}s_k(J(x_k)))$ 
	\item $\bar{R}^+_{s,k}(\cdot,\cdot;h)$ of $s_k(J(x_k+\sqrt{h}s_k(J(x_k))))$
\end{itemize}
\textit{Step 1: Basis.} 
Consider the first step of \eqref{eq:algo} with \eqref{eq:TM}, i.e,
\begin{align}
x_1 &= x_0 + \sqrt{h} \alpha_1 s_0 (J(x_0)) \nonumber \\
&+ \sqrt{h} \alpha_2 s_0 \big(J(x_0+\sqrt{h}s_0(J(x_0)))\big) 
\nonumber \\
&\!\!\!\!\!\!\!\!\!\overset{(\text{\Cref{thm:taylor}})}{=} x_0 + \sqrt{h}(\alpha_1+\alpha_2)s_0(J(x_0)) \nonumber \\
&+ h \alpha_2 \frac{\partial s_0}{\partial J}(J(x_0))s_0(J(x_0))^\top \nabla J(x_0) 
\nonumber \\
&+ R_0(x_0,J(x_0);h^{3/2})
\label{pf:taylor_1}
\end{align}
with $R_0:\mathbb{R}^n \times \mathbb{R} \times \mathbb{R}_{\ge 0} \rightarrow \mathbb{R}^n$.
In the above equation, \Cref{thm:taylor} is applied twice. First, for $J(x_0 + \sqrt{h}s_0(J(x_0))$ where $a(\cdot):=J(\cdot)$, $b(\cdot) := s_0(\cdot)$, $z:=x_0$, and $y:=J(x_0)$ in \Cref{thm:taylor} are chosen such that $R^+_{J,0}(\cdot,\cdot;h)$ as in $\eqref{eq:tayler_R}$ exists, i.e., 
\begin{align}
J\big (x_0+\sqrt{h}s_0(J(x_0))\big) &= J(x_0) + \sqrt{h} s_0(J(x_0))^\top \nabla J(x_0)  \nonumber \\ &+ R^+_{J,0}(x_0,J(x_0);h)
\label{pf:taylor_2}
\end{align}
with $R^+_{J,0}:\mathbb{R}^n \times \mathbb{R} \times \mathbb{R}_{\ge 0} \rightarrow \mathbb{R}$.
Second, for $s_0(J(x_0+\sqrt{h}s_0(J(x_0))))$ where $a(\cdot):=f(\cdot)$ and $a(\cdot):=g(\cdot)$, $b(x_0,J(x_0))=s_0(J(x_0))^\top \nabla J(x_0)+R^+_{J,0}(x_0,J(x_0);h)$, $z:=J(x_0)$, and $y = [x_0^\top\ J(x_0)]^\top$ in \Cref{thm:taylor} are chosen such that a $R^+_{s,0}(\cdot,\cdot;h)$ as in $\eqref{eq:tayler_R}$ exists with given bounds on $u_0$ and $v_0$, i.e., 
\begin{align}
&s_0\big(J(x_0+\sqrt{h}s_0(J(x_0)))\big) 
\nonumber 
\\
&\quad= s_0\Big(J(x_0) + \sqrt{h} s_0(J(x_0))^\top \nabla J(x_0) 
\nonumber \\ 
&\quad  + R^+_{J,0}(x_0,J(x_0);h)\Big)
\nonumber \\
&\quad= s_0(J(x_0)) + \frac{\partial s_0}{\partial J} (J(x_0))\Big(\sqrt{h}s_0(J(x_0))^\top \nabla J(x_0)  \nonumber \\
&\quad + R^+_{J,0}(x_0,J(x_0);h)\Big) + R^+_{s,0}(x_0,J(x_0);h)
\label{pf:taylor_3}
\end{align}
with $R^+_{s,0}:\mathbb{R}^n \times \mathbb{R} \times \mathbb{R}_{>0} \rightarrow \mathbb{R}^n$.
Note that $s_0(\cdot) = f(\cdot)u_0 + g(\cdot) v_0$ (see \eqref{eq:TM}), i.e., \eqref{pf:taylor_3} is obtained by applying \Cref{thm:taylor} to $f(\cdot)$ and $g(\cdot)$ separately. We neglected this intermediate step and directly stated the Taylor expansion for $s_0(\cdot)$. In the sequel we do not highlight this intermediate step and ''directly`` apply \Cref{thm:taylor} to $a(\cdot):=s_i(\cdot)$ which implies that \Cref{thm:taylor} is applied to $a(\cdot):=f_i(\cdot)$ and $a(\cdot):=g_i(\cdot)$ separately with given bounds on $u_i$ and $v_i$. 

Then, since $J\in C^2(\mathbb{R}^n;\mathbb{R})$ by Assumption \ref{assump:smooth_fg}, $x_0,x_0+\sqrt{h} s_0(J(x_0)) \in \mathcal{X}$, and $J(x_0) \in \mathcal{J}$, we conclude with \Cref{thm:taylor}, there exists a $M^+_{J,0}\in\mathbb{R}_{\ge 0}$ such that $\|R^+_{J,0}(x_0,J(x_0);h)\|_2 \le M^+_{J,0} h$. Additionally, by assumption $s_0 \in C^2(\mathbb{R}; \mathbb{R}^n)$ (see Assumption \ref{assump:smooth_fg}) and $J(x_0 + \sqrt{h} s_0(J(x_0))) \in \mathcal{J}$, thus, by \Cref{thm:taylor} there exists a $M
^+_{s,0} \in \mathbb{R}_{\ge 0}$ such that $\|R^+_{s,0}(x_0,J(x_0);h)\|_2 \le M^+_{s_0}h$. Putting these facts together, we obtain that 
\begin{align}
R_0(x_0,J(x_0);h^{3/2}) &= \sqrt{h}\frac{\partial s_0}{\partial J}(J(x_0) R^+_{J,0}(x_0,J(x_0);h) \nonumber \\
&+ \sqrt{h} R^+_{s,0}(x_0,J(x_0);h)
\label{pf:taylor_4}
\end{align}
in \eqref{pf:taylor_1} holds. Then by assumption $s_0 \in C^2(\mathbb{R}; \mathbb{R}^n$) (see Assumption \ref{assump:smooth_fg}) and $J(x_0) \in \mathcal{J}$, there exits a $L_{s,0} \in \mathbb{R}_{\ge 0}$ such that 
\begin{align}
\|\frac{\partial s_0}{\partial J}(J(x_0)\|_2 \le L_{s,0}.
\label{pf:taylor_5}
\end{align}
Then it follows that 
\begin{align}
\|R_0(x_0,J(x_0);h^{3/2})\|_2 &\le M_0 h^{3/2} \nonumber \\
&:=(M^+_{J,0}L_{s,0}+M^+_{s,0})h^{3/2}
\label{pf:taylor_6}
\end{align}
and we can obtain that $R_0(x_0,J(x_0);h^{3/2}) = \mathcal{O}(h^{3/2})$, thus, \eqref{pf:taylor_1} is \eqref{eq:taylor_gen} for $k=0$ and $m=1$. 
\textit{Step 2: Inductive Step.} Assume that \eqref{eq:taylor_gen} holds for $x_{m-1}$, i.e., that the evolution of $x_k$ for $k = 0,\ldots,m-1$ reads
\begin{align}
x_{m-1} &= x_0 + \sqrt{h}(\alpha_1+\alpha_2)\sum_{i=0}^{m-2}s_i(J(x_0)) \nonumber
\\
&+h\alpha_2 \sum_{i = 0}^{m-2} \frac{\partial s_i}{\partial J}(J(x_0))s_i(J(x_0))^\top \nabla J(x_0)  
\nonumber \\
&+h (\alpha_1+\alpha_2)^2 \sum_{i = 0}^{m-2} \sum_{j = 0}^{i-1} \frac{\partial s_i}{\partial J}(J(x_0)) s_j(J(x_0))^\top \nabla J(x_0) \nonumber
\\
&+ R_{m-2}(x_0,J(x_0);h^{3/2})
\label{pf:taylor_7}
\end{align}
and there exists a $M_{m-2}\in \mathbb{R}_{\ge 0}$ such that $\|R_{m-2}(x_0,J(x_0);h^{3/2})\|_2 \le M_{m-2} h^{3/2}$.
Next we consider the $m$-th step of \eqref{eq:algo} with \eqref{eq:TM}, i.e.,
\begin{align}
x_{m} &= x_{m-1}+\sqrt{h} \alpha_1 s_{m-1}(J(x_{m-1})) \nonumber \\
&+\sqrt{h} \alpha_2 s_{m-1}\Big(J\big(x_{m-1}+\sqrt{h}s_{m-1}(J(x_{m-1}))\big)\Big).
\label{pf:taylor_8}
\end{align}
Again, as in Step 1, we apply \Cref{thm:taylor} several times. First for $J(x_{m-1})$ where $a(\cdot):=J(\cdot)$, $b(x_0,J(x_0))= h^{-1/2}(rhs.\ of\ \eqref{pf:taylor_7}-x_0)$, $z:=x_0$, and $y = [x_0^\top\ J(x_0)]^\top$ in \Cref{thm:taylor} are chosen such that a $R_{J,m-1}(\cdot,\cdot;h)$ as in $\eqref{eq:tayler_R}$ exists, i.e., 
\begin{align}
&J(x_{m-1}) = J(r.h.s.\ of\ \eqref{pf:taylor_7}) 
\nonumber \\
&\!\!\!\!\!\!\!\!\!\overset{(\text{\Cref{thm:taylor}})}{=} J(x_0) + 
\sqrt{h}(\alpha_1+\alpha_2)\sum_{i=0}^{m-2}s_i(J(x_0))^\top \nabla J(x_0)  \nonumber
\\
&+\bar{R}_{J,m-1}(x_0,J(x_0);h)
\label{pf:taylor_9}
\end{align}
with $\bar{R}_{J,m-1}:\mathbb{R}^n\times\mathbb{R}\times\mathbb{R}_{>0} \rightarrow \mathbb{R}$ where 
\begin{align}
&\bar{R}_{J,m-1}(x_0,J(x_0);h) =
h \nabla J(x_0)^\top \nonumber \\
&\quad\times \Big(\alpha_2 \sum_{i=0}^{m-2} s_i(J(x_0)) \frac{\partial s_i}{\partial J}(J(x_0))^\top 
\nonumber \\
&\quad+(\alpha_1+\alpha_2)^2 \sum_{i = 0}^{m-2} \sum_{j = 0}^{i-1}  s_j(J(x_0)) \frac{\partial s_i}{\partial J}(J(x_0))^\top \Big) \nabla J(x_0)
\nonumber \\
&\quad+R_{m-2}(x_0,J(x_0);h^{3/2})+R_{J,m-1}(x_0,J(x_0);h)
\label{pf:taylor_10}
\end{align}
with $R_{J,m-1}:\mathbb{R}^n\times\mathbb{R}\times\mathbb{R}_{\ge 0}\rightarrow \mathbb{R}$.
Secondly, \Cref{thm:taylor} is applied on $s_{m-1}(J(x_{m-1}))$ where $a(\cdot):=s_{m-1}(\cdot)$, $b(x_0,J(x_0))=h^{-1/2}(rhs.\ of\ \eqref{pf:taylor_9}-J(x_0))$, $z:=J(x_0)$, and $y = [x_0^\top\ J(x_0)]^\top$ in \Cref{thm:taylor} are chosen such that a $R_{s,m-1}(\cdot,\cdot;h)$ as in $\eqref{eq:tayler_R}$ exists, i.e., 
\begin{align}
&s_{m-1}(J(x_{m-1})) = s_{m-1}(rhs.\ of\ \eqref{pf:taylor_9})
\nonumber \\
&\quad \!\!\!\!\!\!\!\!\!\overset{(\text{\Cref{thm:taylor}})}{=} s_{m-1}(J(x_0)) 
+ \sqrt{h} (\alpha_1+\alpha_2) \nonumber 
\\ 
&\quad \times \sum_{i=0}^{m-2}\frac{\partial s_{m-1}}{\partial J}(J(x_0)) s_i(J(x_0))^\top \nabla J(x_0)
\nonumber \\
&\quad + \bar{R}_{s,m-1}(x_0,J(x_0);h)
\label{pf:taylor_11}
\end{align}
with $\bar{R}_{s,m-1}:\mathbb{R}^n\times\mathbb{R}\times\mathbb{R}_{\ge 0} \rightarrow \mathbb{R}^n$ where
\begin{align}
\bar{R}_{s,m-1}(x_0,J(x_0);h) &= \frac{\partial s_{m-1}}{\partial J}(J(x_0) \bar{R}_{J,m-1}(x_0,J(x_0);h) \nonumber 
\\
&+ R_{s,m-1}(x_0,J(x_0);h)
\label{pf:taylor_12}
\end{align}
with $R_{s,m-1}:\mathbb{R}^n\times\mathbb{R}\times\mathbb{R}_{\ge 0} \rightarrow \mathbb{R}^n$. Thirdly, \Cref{thm:taylor} is applied on $J(x_{m-1}+\sqrt{h}s_{m-1}(J(x_{m-1})))$ where $a(\cdot):=J(\cdot)$, $b(x_0,J(x_0))= h^{-1/2}(rhs.\ of\ \eqref{pf:taylor_7}-x_0)+\sqrt{h}(rhs.\ of\ \eqref{pf:taylor_11})$, $z:=x_0$, and $y = [x_0^\top\ J(x_0)]^\top$ in \Cref{thm:taylor} are chosen such that a $R^+_{J,m-1}(\cdot,\cdot;h)$ as in $\eqref{eq:tayler_R}$ exists, i.e., 
\begin{align}
&J(x_{m-1}+\sqrt{h}s_{m-1}(J(x_{m-1})))  \nonumber 
\\
&\quad = J(rhs.\ of\ \eqref{pf:taylor_7}+\sqrt{h}(rhs.\ of\ \eqref{pf:taylor_11})) \nonumber 
\\
&\quad \!\!\!\!\!\!\!\!\!\overset{(\text{\Cref{thm:taylor}})}{=} J(x_0) + 
\sqrt{h}(\alpha_1+\alpha_2)\sum_{i=0}^{m-2}s_i(J(x_0))^\top \nabla J(x_0) \nonumber 
\\
&\quad +\sqrt{h}s_{m-1}(J(x_0))^\top \nabla J(x_0) \nonumber \\
&\quad + \bar{R}^+_{J,m-1}(x_0,J(x_0);h)
\label{pf:taylor_13}
\end{align}
with $\bar{R}^+_{J,m-1}:\mathbb{R}^n\times\mathbb{R}\times\mathbb{R}_{\ge 0} \rightarrow \mathbb{R}$ where 
\begin{align}
&\bar{R}^+_{J,m-1}(x_0,J(x_0);h)  \nonumber
\\
&\quad = h \nabla J(x_0)^\top \Big(\alpha_2 \sum_{i=0}^{m-2} s_i(J(x_0)) \frac{\partial s_i}{\partial J}(J(x_0))^\top 
\nonumber \\
&\quad+(\alpha_1+\alpha_2)^2 \sum_{i = 0}^{m-2} \sum_{j = 0}^{i-1}  s_j(J(x_0)) \frac{\partial s_i}{\partial J}(J(x_0))^\top \nonumber \\
&\quad \times (\alpha_1+\alpha_2)\sum_{i=0}^{m-2} s_i(J(x_0))\frac{\partial s_{m-1}}{\partial J}(J(x_0))^\top\Big) \nabla J(x_0)
\nonumber \\
&\quad+R_{m-2}(x_0,J(x_0);h^{3/2}) + \sqrt{h} \bar{R}_{s,m-1}(x_0,J(x_0);h) \nonumber 
\\
&\quad +R^+_{J,m-1}(x_0,J(x_0);h)
\label{pf:taylor_14}
\end{align}
with $R^+_{J,m-1}:\mathbb{R}^n\times\mathbb{R}\times\mathbb{R}_{\ge 0}\! \rightarrow\! \mathbb{R}$. Lastly, \Cref{thm:taylor} is applied on \linebreak[4] $s_{m-1}(J(x_{m-1}+\sqrt{h}s_{m-1}(J(x_{m-1}))))$ where $a(\cdot):=s_{m-1}(\cdot)$, $b(x_0,J(x_0))=h^{-1/2}(rhs.\ of\ \eqref{pf:taylor_13}-J(x_0))$, $z:=J(x_0)$, and $y = [x_0^\top\ J(x_0)]^\top$ in \Cref{thm:taylor} are chosen such that a $R^+_{s,m-1}(\cdot,\cdot;h)$ as in $\eqref{eq:tayler_R}$ exists, i.e., 
\begin{align}
&s_{m-1}(J(x_{m-1}+\sqrt{h}s_{m-1}(J(x_{m-1})))) \nonumber 
\\
&\quad = s_{m-1}(rhs.\ of\ \eqref{pf:taylor_13})
\nonumber \\
&\quad \!\!\!\!\!\!\!\!\! \overset{(\text{\Cref{thm:taylor}})}{=} s_{m-1}(J(x_0)) 
+ \sqrt{h} \nonumber 
\\ 
&\quad \times (\alpha_1+\alpha_2)\sum_{i=0}^{m-2}\frac{\partial s_{m-1}}{\partial J}(J(x_0)) s_i(J(x_0))^\top \nabla J(x_0)
\nonumber \\
&\quad+\sqrt{h}\frac{\partial s_{m-1}}{\partial J}(J(x_0))s_{m-1}(J(x_0))^\top \nabla J(x_0) \nonumber 
\\
&\quad + \bar{R}^+_{s,m-1}(x_0,J(x_0);h)
\label{pf:taylor_15}
\end{align}
with $\bar{R}^+_{s,m-1}:\mathbb{R}^n\times\mathbb{R}\times\mathbb{R}_{>0} \rightarrow \mathbb{R}^n$ where
\begin{align}
\bar{R}^+_{s,m-1}(x_0,J(x_0);h) &= \frac{\partial s_{m-1}}{\partial J}(J(x_0) \bar{R}^+_{J,m-1}(x_0,J(x_0);h) \nonumber \\
& + R^+_{s,m-1}(x_0,J(x_0);h)
\label{pf:taylor_16}
\end{align}
with $R^+_{s,m-1}:\mathbb{R}^n\times\mathbb{R}\times\mathbb{R}_{\ge 0} \rightarrow \mathbb{R}^n$. 
Then with the same arguments as in Step 1, namely by assumption $J\in C^2(\mathbb{R}^n;\mathbb{R})$ (see Assumption \ref{assump:smooth_fg}) for every $k=0,\ldots,m-1$, $s_k \in C^2(\mathbb{R}; \mathbb{R}^n)$ (see Assumption \ref{assump:smooth_fg}), $x_k,x_k+\sqrt{h} s_k(J(x_k)) \in \mathcal{X}$, and $J(x_k),J(x_k + \sqrt{h} s_k(J(x_k))) \in \mathcal{J}$, thus, there exist $\bar{M}_{J,m-1}, \bar{M}_{s,m-1},$ $\bar{M}^+_{J,m-1},$ $\bar{M}^+_{s,m-1} \in \mathbb{R}_{\ge 0}$ such that $\|\bar{R}_{J,m-1}(x_0,J(x_0);h)\|_2 \le \bar{M}_{J,m-1}h$ and so on. Note that $\bar{M}_{J,m-1}$ is derived by \eqref{pf:taylor_10} as
\begin{align}
\bar{M}_{J,m-1} &= L_J^2 \Big(\alpha_2 \sum_{i=0}^{m-2}K_{s,i}L_{s,i} + (\alpha_1+\alpha_2)^2 \sum_{i=0}^{i-1} K_{s,i}L_{s,i}\Big) \nonumber 
\\
& + \sqrt{h} M_{m-2} + M_{J,m-1}
\label{pf:taylor_17}
\end{align}
with $L_J,K_{s,i},L_{s,i},M_{m-2}\in \mathbb{R}_{\ge 0}$, where $\|\nabla J(x_0)\|_2 \le L_J$, $\|\nabla s_i(J(x_0))\|_2 \le K_{s,i}$, $\| \nicefrac{\partial s_i}{\partial J}J(x_0)\|_2 \le L_{s,i}$, and $\|R_{m-2}(x_0,J(x_0);h^{3/2})\|_2 \le \bar{M}_{m-2}h^{3/2}$. Additionally, by \Cref{thm:taylor} there exists a $M_{J,m-1}\in\mathbb{R}_{>0}$ such that  $\|R_{J,m-1}(x_0,J(x_0);h)\|_2 \le M_{J,m-1}h$. The constants $\bar{M}_{s,m-1}, \bar{M}^+_{J,m-1}$, $\bar{M}^+_{s,m-1}$ are derived in the same manner via \eqref{pf:taylor_12}, \eqref{pf:taylor_14}, and \eqref{pf:taylor_16}, respectively. 
Finally, plugging \eqref{pf:taylor_11} and \eqref{pf:taylor_15} in \eqref{pf:taylor_8} yields \eqref{eq:taylor_gen} with $k=0$ and 
\begin{align}
R_{m-1}(x_0,J(x_0);h^{3/2}) &= R_{m-2}(x_0,J(x_0);h^{3/2}) \nonumber 
\\
&+ \sqrt{h}\alpha_1\bar{R}_{s,m-1}(x_0,J(x_0);h)
\nonumber \\
&+\sqrt{h}\alpha_2\bar{R}^+_{s,m-1}(x_0,J(x_0);h),
\end{align}
where,
\begin{align}
&\|R_{m-1}(x_0,J(x_0);h^{3/2})\|_2 \le M_{m-1} h^{3/2} \nonumber \\
&\quad :=(M_{m-2} + \alpha_1 \bar{M}_{s,m-1} + \alpha_1 \bar{M}^+_{s,m-1})h^{3/2}.
\end{align}
\end{proof}
The following two lemmas state the sufficient part of Cauchy's interlacing inequalities \cite{Cauchy} for real skew-symmetric matrices. Hence, the imaginary part of the eigenvalues of the principal submatrix can be chosen w.r.t. certain inequalities depending on the eigenvalues of the given skew-symmetric matrix. 
\begin{lem}
	\label{thm:fan_pall}
	Let $C \in \mathbb{R}^{p\times p}$ be a skew-symmetric matrix with eigenvalues $\pm \eta_k i, \eta_k \in \mathbb{R}_{\ge 0}, k = 1,\ldots,\lceil p/2 \rceil$ and let $\omega_\ell \in \mathbb{R}_{\ge 0},\ \ell=1,\ldots,\lceil p/2 \rceil-1$ such that the inequality
	\begin{align}
		\eta_1 \ge \omega_1 \ge \eta_2 \ge \omega_2 \cdots \ge \eta_{\lceil p/2 \rceil-1} \nonumber \\
		\ge \omega_{\lceil p/2 \rceil-1} \ge \eta_{\lceil p/2 \rceil} \ge 0,
		\label{eq:fan_pall1}
	\end{align}
	is satisfied. Then there exists an unitary matrix $\Theta \in \mathbb{R}^{p\times p}$ such that $Q \in \mathbb{R}^{(p-1) \times (p-1)}$ is a principal submatrix of $\Theta^\top C\Theta$ with eigenvalues $\pm \omega_\ell i$.
\end{lem}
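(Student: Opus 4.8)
The plan is to realize the prescribed numbers $\pm\omega_\ell i$ as the spectrum of a codimension-one compression of $C$, and to determine the single deleted direction through a skew-symmetric analogue of the secular equation. First I would put $C$ into real canonical form: there is a real orthogonal $U$ with $U^\top C U = \Lambda$, where $\Lambda$ is block diagonal with $2\times2$ blocks $\eta_k K$, $K=\left[\begin{smallmatrix}0&1\\-1&0\end{smallmatrix}\right]$, together with one extra $0$ on the diagonal when $p$ is odd. Since any orthogonal conjugation I build may be post-composed with $U$, I assume $C=\Lambda$. Because $Q$ has size $p-1$, being a principal submatrix of $\Theta^\top C\Theta$ amounts to deleting one index, which after a permutation I take to be the last; deleting the last row and column yields the compression of $C$ to the hyperplane $w^\perp$, where $w$ is the last column of $\Theta$. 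As the compression of a skew-symmetric operator to a hyperplane is again skew-symmetric, it suffices to find a unit vector $w$ for which the compression $C_w:=P_{w^\perp}C|_{w^\perp}$ has eigenvalues $\pm\omega_\ell i$ (together with the extra $0$ forced by odd size when $p$ is even); completing an orthonormal basis of $w^\perp$ by $w$ then produces $\Theta$.

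To locate the spectrum of $C_w$ I would invoke the Schur-complement identity $\det(xI_{p-1}-C_w)=\det(xI_p-C)\,w^\top(xI_p-C)^{-1}w$. Writing $w$ in the $2$-dimensional blocks of $\Lambda$ as $w=(w_1,\dots)$ and computing $(xI-\eta_k K)^{-1}$ directly gives $w^\top(xI-C)^{-1}w=\sum_k \frac{x\,|w_k|^2}{x^2+\eta_k^2}$, with an additional term $|w_0|^2/x$ from the zero block when $p$ is odd. Hence $\det(xI_{p-1}-C_w)=x\sum_k |w_k|^2\prod_{j\ne k}(x^2+\eta_j^2)$, and with $s=x^2$ the nonzero eigenvalues are governed by $\sum_k \frac{|w_k|^2}{\eta_k^2+s}=0$, which is exactly the classical secular equation with nonnegative weights $p_k:=|w_k|^2$ on the nodes $\eta_k^2$.

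I then want the degree-$(N-1)$ factor $\sum_k p_k\prod_{j\ne k}(s+\eta_j^2)$, where $N=\lceil p/2\rceil$, to coincide with $\prod_\ell(s+\omega_\ell^2)$. Evaluating both sides at the nodes $s=-\eta_k^2$ forces $p_k=\prod_\ell(\omega_\ell^2-\eta_k^2)\big/\prod_{j\ne k}(\eta_j^2-\eta_k^2)$, and a sign count (numerator and denominator each change sign exactly $N-k$ times, by the interlacing $\eta_1\ge\omega_1\ge\eta_2\ge\cdots$) shows every $p_k\ge 0$. Matching leading coefficients gives $\sum_k p_k=1$, so the vector $w$ assembled from any block vectors $w_k$ with $|w_k|^2=p_k$ is automatically a unit vector and realizes the characteristic polynomial $x\prod_\ell(x^2+\omega_\ell^2)$; the odd case is identical after adjoining the node $\eta_N=0$.

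The main difficulty will be the degenerate regime where the interlacing is non-strict or the $\eta_k$ repeat: there the residue formula for $p_k$ becomes $0/0$, and the secular equation no longer accounts for every eigenvalue of $C_w$, since an eigenvalue of $C$ with vanishing weight survives into the compression. I would dispatch this either by bookkeeping — a surviving $\eta_k$ is, by the equality case of the interlacing, one of the prescribed $\omega_\ell$, so the spectrum still matches — or, more robustly, by continuity: perturb $(\eta_k,\omega_\ell)$ to strictly interlacing distinct data, build $\Theta_\varepsilon$ as above, and pass to a limit using compactness of the orthogonal group and continuity of the spectrum. It is worth noting why I avoid the shortcut of applying the complex Hermitian Fan--Pall theorem to the Hermitian matrix $iC$: that route reproduces the correct eigenvalues but only through a complex unitary conjugation, losing both the real orthogonal $\Theta$ and the real skew-symmetric $Q$, which the explicit construction above preserves by design.
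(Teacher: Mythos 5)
Your proposal is correct and is essentially the argument the paper itself relies on: the lemma is formally dispatched by a citation to Thompson, but the constructive verification in the paper's final Remark (following Marshall--Olkin, Lemma B.3) is exactly your secular-equation computation, with the bordering vector $z$ satisfying $z_{2j-1}^2=z_{2j}^2=-\prod_i(\delta_j^2-\nu_i^2)\big/\bigl(2\prod_{i\neq j}(\delta_j^2-\delta_i^2)\bigr)$, i.e.\ the same residue weights you derive. The only cosmetic difference is direction: you compress $C$ to the hyperplane $w^\perp$ and read off $\det(xI-C_w)=\det(xI-C)\,w^\top(xI-C)^{-1}w$, whereas the paper borders the small block-diagonal matrix $D_2$ by $z$ to build the large one --- the two are the same identity, and your handling of the degenerate (non-strict interlacing) case by continuity is a sound addition.
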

For a proof, we refer to \cite{THOMPSON1979249}.
\begin{lem}
	\label{thm:fan_pall_mult}
	Let $C \in \mathbb{R}^{p\times p}$ be a skew-symmetric matrix with eigenvalues $\pm \eta_\ell i,\eta_{\ell}\in\mathbb{R}_{\ge 0}, \ell = 1,\ldots,p$, arranged according to
	\begin{align}
		\eta_1\ge\eta_2\ge\ldots\ge\eta_{\lceil p/2 \rceil}\ge 0.
	\end{align}
	Then for $\omega_1 \ge \omega_2 \ge \ldots \ge \omega_r$ with $\omega_k \in \mathbb{R}_{\ge 0}$ such that
	\begin{align}
		&\eta_k \ge \omega_k \ge \eta_{\lceil p/2 \rceil -r + k},
		\label{eq:fan_pall_mult}
	\end{align}
	there exists an unitary matrix $\Theta \in \mathbb{R}^{p\times p}$ such that $Q \in \mathbb{R}^{(2r) \times (2r)}$ is a principal submatrix of $\Theta^\top C\Theta$ with eigenvalues $\pm \omega_\ell i,\ \ell =1,\ldots,r$.
\end{lem}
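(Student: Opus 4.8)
The plan is to deduce the statement from the single-deletion case \Cref{thm:fan_pall} by induction on the number of deleted coordinates $d := p - 2r$. Throughout I use that a real skew-symmetric matrix has purely imaginary spectrum occurring in conjugate pairs $\pm\eta i$, that every principal submatrix of a skew-symmetric matrix is again skew-symmetric, and that $\Theta^\top C\Theta$ is skew-symmetric whenever $\Theta$ is orthogonal. Consequently it suffices to track, for a $q\times q$ skew-symmetric matrix, its \emph{magnitude spectrum} $\mu_1\ge\cdots\ge\mu_{\lceil q/2\rceil}\ge 0$, with the convention that the last entry is the structurally forced zero when $q$ is odd (and $\eta_{\lceil p/2\rceil+1}:=0$).

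For the base case $d=0$ we have $p=2r$ and $\lceil p/2\rceil=r$, so the hypothesis $\eta_k\ge\omega_k\ge\eta_{\lceil p/2\rceil-r+k}=\eta_k$ forces $\omega_k=\eta_k$; taking $\Theta=I$ and $Q=C$ finishes this case. For the inductive step I first use \Cref{thm:fan_pall} to pass to a $(p-1)\times(p-1)$ principal submatrix $C_1$ of $\Theta_1^\top C\Theta_1$ whose magnitude spectrum is a carefully chosen interlacing sequence $\nu$, and then invoke the induction hypothesis on $C_1$ (which now requires $d-1$ further deletions) to produce the $2r\times 2r$ block $Q$. The two orthogonal conjugations compose: extending the $(p-1)$-dimensional transformation by the identity on the deleted coordinate gives a block-diagonal orthogonal matrix, and conjugating by it leaves the relevant principal block equal to $\Theta_2^\top C_1\Theta_2$; since a principal submatrix of a principal submatrix of an orthogonal conjugate of $C$ is again such a submatrix, setting $\hat\Theta$ equal to the product of the two (extended) conjugations yields the required $\Theta$.

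The crux is the explicit construction of the bridge spectrum $\nu$. Writing $s:=\lceil p/2\rceil - r\ge 1$, I set $\nu_j:=\max(\omega_j,\eta_{j+1})$ for $1\le j\le r$ and $\nu_j:=\eta_{j+1}$ for $r<j\le \lceil (p-1)/2\rceil$ (the last entry then being the forced zero when $p$ is even). One checks directly that $\nu$ is non-increasing and that
\begin{align*}
\eta_j \ge \nu_j \ge \eta_{j+1}
\quad\text{and}\quad
\nu_k \ge \omega_k \ge \nu_{\lceil (p-1)/2\rceil - r + k},
\end{align*}
where the left chain is the Cauchy interlacing required to apply \Cref{thm:fan_pall}, and the right chain is the relaxed interlacing of the order $\lceil(p-1)/2\rceil-r$ appropriate to the smaller problem. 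Both verifications use only the defining hypothesis $\eta_k\ge\omega_k\ge\eta_{\lceil p/2\rceil-r+k}$ together with the monotonicity of $\eta$ and $\omega$, distinguishing according to whether the relevant index falls in the $\max$-branch or the $\eta$-branch of $\nu$.

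The main obstacle is exactly this combinatorial bridge together with the attendant parity bookkeeping: a single coordinate deletion flips the dimension's parity, so the length of the magnitude spectrum and the presence of a forced zero change from step to step. One must confirm that the chosen $\nu$ simultaneously interlaces $\eta$ in the tight Cauchy sense and leaves $\omega$ in the correct slot, so that the remaining relaxed-interlacing order decreases consistently—by one for each magnitude value actually removed (which happens on the odd-to-even deletions). Once the bridge is established, the remainder is the routine induction and the composition of orthogonal conjugations described above.
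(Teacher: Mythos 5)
Your proof is correct and follows the same route as the paper: the paper's proof is a one-line appeal to iterating \Cref{thm:fan_pall} in the manner of Fan--Pall, and your induction on $d=p-2r$ is exactly that iteration, with the intermediate spectrum $\nu_j=\max(\omega_j,\eta_{j+1})$ making explicit the interlacing bridge that the paper leaves implicit. The parity bookkeeping and the verification that $\nu$ both Cauchy-interlaces $\eta$ and keeps $\omega$ admissible for the $(p-1)$-dimensional problem check out.
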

\begin{proof}
	Applying \cref{thm:fan_pall} $\lceil p/2 \rceil - r$ times, yields the result, similar to the proof of \cite[Theorem 1]{fan1957imbedding}.
\end{proof}
\subsubsection*{Numerical validation of \cref{property:interlacing}}
	\label{thm:interlacing_P}
	Due to the dependency of $C(m)$ on $m$ in \eqref{eq:pm}, the interlacing lemmas, see \Cref{thm:fan_pall} and \Cref{thm:fan_pall_mult}, are not applicable, since the entries of $C(m)$ change with dimension due to $\epsilon(m)$.  
	We verified numerically that the interlacing property \eqref{eq:interlacing} holds for all $m \le 10000$. The corresponding Matlab code is available in the ancillary file folder on Arxiv. Moreover, in \Cref{fig:interlacing_proof} the interlacing property for $4 \le m \le 10$ is visualized. 
	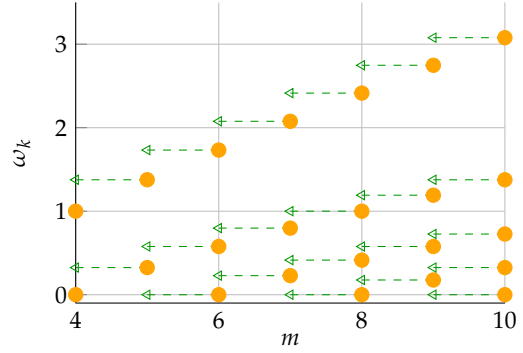
\begin{figure}[h]
		\centering
		\setlength\figureheight{4cm}
		\setlength\figurewidth{6cm}
		{\small 
%
\definecolor{mycolor1}{rgb}{0.00000,0.44700,0.74100}%
\definecolor{mycolor2}{rgb}{1.00000,0.64453,0.00000}%
\definecolor{mycolor3}{rgb}{0.49400,0.18400,0.55600}%
\definecolor{mycolor4}{rgb}{0.46600,0.67400,0.18800}%
\definecolor{mycolor5}{rgb}{0.92900,0.69400,0.12500}%
\definecolor{mycolor6}{rgb}{0.63500,0.07800,0.18400}%
\begin{tikzpicture}

\begin{axis}[%
width=0.951\figurewidth,
height=\figureheight,
at={(0\figurewidth,0\figureheight)},
scale only axis,
xmin=4,
xmax=10,
xlabel=$m$,
xlabel style={at={(0.5,0.06)}},
ymin=-0.1,
ymax=3.5,
ylabel=$\omega_k$,
ylabel style={at={(0.09,0.5)}},
axis background/.style={fill=white},
axis x line*=bottom,
axis y line*=left,
xmajorgrids,
ymajorgrids
]
\addplot [color=mycolor1, draw=none, mark size=2.7pt, mark=*, mark options={solid, fill=mycolor2, mycolor2}, forget plot]
  table[row sep=crcr]{%
4	1\\
4	0\\
};
\addplot [color=green60, dashed, mark=triangle, mark options={solid, rotate=90, green60}, forget plot]
  table[row sep=crcr]{%
4	1.37638192047117\\
5	1.37638192047117\\
};
\addplot [color=green60, dashed, mark=triangle, mark options={solid, rotate=90, green60}, forget plot]
  table[row sep=crcr]{%
4	0.324919696232906\\
5	0.324919696232906\\
};
\addplot [color=mycolor3, draw=none, mark size=2.7pt, mark=*, mark options={solid, fill=mycolor2, mycolor2}, forget plot]
  table[row sep=crcr]{%
5	1.37638192047117\\
5	0.324919696232906\\
};
\addplot [color=green60, dashed, mark=triangle, mark options={solid, rotate=90, green60}, forget plot]
  table[row sep=crcr]{%
5	1.73205080756888\\
6	1.73205080756888\\
};
\addplot [color=green60, dashed, mark=triangle, mark options={solid, rotate=90, green60}, forget plot]
  table[row sep=crcr]{%
5	0.577350269189626\\
6	0.577350269189626\\
};
\addplot [color=green60, dashed, mark=triangle, mark options={solid, rotate=90, green60}, forget plot]
  table[row sep=crcr]{%
5	0\\
6	0\\
};
\addplot [color=mycolor1, draw=none, mark size=2.7pt, mark=*, mark options={solid, fill=mycolor2, mycolor2}, forget plot]
  table[row sep=crcr]{%
6	1.73205080756888\\
6	0.577350269189626\\
6	0\\
};
\addplot [color=green60, dashed, mark=triangle, mark options={solid, rotate=90, green60}, forget plot]
  table[row sep=crcr]{%
6	2.07652139657234\\
7	2.07652139657234\\
};
\addplot [color=green60, dashed, mark=triangle, mark options={solid, rotate=90, green60}, forget plot]
  table[row sep=crcr]{%
6	0.797473388882404\\
7	0.797473388882404\\
};
\addplot [color=green60, dashed, mark=triangle, mark options={solid, rotate=90, green60}, forget plot]
  table[row sep=crcr]{%
6	0.22824347439015\\
7	0.22824347439015\\
};
\addplot [color=mycolor4, draw=none, mark size=2.7pt, mark=*, mark options={solid, fill=mycolor2, mycolor2}, forget plot]
  table[row sep=crcr]{%
7	2.07652139657234\\
7	0.797473388882404\\
7	0.22824347439015\\
};
\addplot [color=green60, dashed, mark=triangle, mark options={solid, rotate=90, green60}, forget plot]
  table[row sep=crcr]{%
7	2.4142135623731\\
8	2.4142135623731\\
};
\addplot [color=green60, dashed, mark=triangle, mark options={solid, rotate=90, green60}, forget plot]
  table[row sep=crcr]{%
7	1\\
8	1\\
};
\addplot [color=green60, dashed, mark=triangle, mark options={solid, rotate=90, green60}, forget plot]
  table[row sep=crcr]{%
7	0.414213562373095\\
8	0.414213562373095\\
};
\addplot [color=green60, dashed, mark=triangle, mark options={solid, rotate=90, green60}, forget plot]
  table[row sep=crcr]{%
7	0\\
8	0\\
};
\addplot [color=mycolor5, draw=none, mark size=2.7pt, mark=*, mark options={solid, fill=mycolor2, mycolor2}, forget plot]
  table[row sep=crcr]{%
8	2.4142135623731\\
8	1\\
8	0.414213562373095\\
8	0\\
};
\addplot [color=green60, dashed, mark=triangle, mark options={solid, rotate=90, green60}, forget plot]
  table[row sep=crcr]{%
8	2.74747741945462\\
9	2.74747741945462\\
};
\addplot [color=green60, dashed, mark=triangle, mark options={solid, rotate=90, green60}, forget plot]
  table[row sep=crcr]{%
8	1.19175359259421\\
9	1.19175359259421\\
};
\addplot [color=green60, dashed, mark=triangle, mark options={solid, rotate=90, green60}, forget plot]
  table[row sep=crcr]{%
8	0.577350269189626\\
9	0.577350269189626\\
};
\addplot [color=green60, dashed, mark=triangle, mark options={solid, rotate=90, green60}, forget plot]
  table[row sep=crcr]{%
8	0.176326980708465\\
9	0.176326980708465\\
};
\addplot [color=mycolor1, draw=none, mark size=2.7pt, mark=*, mark options={solid, fill=mycolor2, mycolor2}, forget plot]
  table[row sep=crcr]{%
9	2.74747741945462\\
9	1.19175359259421\\
9	0.577350269189626\\
9	0.176326980708465\\
};
\addplot [color=green60, dashed, mark=triangle, mark options={solid, rotate=90, green60}, forget plot]
  table[row sep=crcr]{%
9	3.07768353717526\\
10	3.07768353717526\\
};
\addplot [color=green60, dashed, mark=triangle, mark options={solid, rotate=90, green60}, forget plot]
  table[row sep=crcr]{%
9	1.37638192047117\\
10	1.37638192047117\\
};
\addplot [color=green60, dashed, mark=triangle, mark options={solid, rotate=90, green60}, forget plot]
  table[row sep=crcr]{%
9	0.726542528005362\\
10	0.726542528005362\\
};
\addplot [color=green60, dashed, mark=triangle, mark options={solid, rotate=90, green60}, forget plot]
  table[row sep=crcr]{%
9	0.324919696232906\\
10	0.324919696232906\\
};
\addplot [color=green60, dashed, mark=triangle, mark options={solid, rotate=90, green60}, forget plot]
  table[row sep=crcr]{%
9	0\\
10	0\\
};
\addplot [color=mycolor6, draw=none, mark size=2.7pt, mark=*, mark options={solid, fill=mycolor2, mycolor2}, forget plot]
  table[row sep=crcr]{%
10	3.07768353717526\\
10	1.37638192047117\\
10	0.726542528005362\\
10	0.324919696232906\\
10	0\\
};
\end{axis}
\end{tikzpicture}
		\caption{Illustration of the absolute value of the complex conjugated eigenvalues $\pm \omega_k i$ ({\color{orange90}$\bullet$}) of $\tilde{P}$ in \eqref{eq:P_tilde} w.r.t. $m$ for $k = 1,\ldots,\lceil m/2 \rceil$. The interlacing property \eqref{eq:interlacing} is apparent and visualized by the arrows, mapping the eigenvalues to the next lower dimension.}
		\label{fig:interlacing_proof}
	\end{figure}
	Note that $m$ acts as the exploration sequences' lengths as introduced in \eqref{eq:composition} and \eqref{eq:TM}, where in the worst case $m=4n$ (cf. \eqref{eq:input_coord}). This implies the property holds for sure for $n \le 2500$ dimensional problems, which is a very high-dimensional problem for derivative-free algorithms.  
\begin{lem}
	\label{thm:gen_area_skew}
	Let $W$ such that $W\mathbb{1}=0$ be given and et $T(W)$ be as in \eqref{eq:taylor_matrix} with $[\alpha_1\ \alpha_2] = [\nicefrac{1}{2}\ \nicefrac{1}{2}]$. Then the value in the $p$-th row and $q$-th column of $T(W)$ is equivalent to the net area (cf. Gauss area formula in \cite{spivak2018calculus}) of the $n$-sided polygon in the
	$e_p-e_q$ plane with corner points 
	\begin{align}
	\correct{x_{p,i} = \sum_{k=0}^{i-1} e_p^\top w_k,\quad y_{q,i} = \sum_{k=0}^{i-1} e_q^\top w_k}
	\label{eq:lem_gen_area_points}
	\end{align}
	for $i=0,\ldots,m-1$ and $p,q=1,\ldots,2n$, where $x_{p,0}= 0$ and $y_{q,0} = 0$.
\end{lem}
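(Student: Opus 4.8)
The plan is to reduce both sides to sums over the scalar sequences $a_i := e_p^\top w_i$ and $b_i := e_q^\top w_i$, $i=0,\ldots,m-1$, and to show they agree. Reading off the $(p,q)$ entry of the defining formula \eqref{eq:cond_equ_gen} at $[\alpha_1\ \alpha_2]=[\nicefrac{1}{2}\ \nicefrac{1}{2}]$ (where $\alpha_2=\tfrac{1}{2}$ and $(\alpha_1+\alpha_2)^2=1$) gives
\[
 T(W)_{pq} = \frac{1}{2}\sum_{i=0}^{m-1} a_i b_i + \sum_{i=0}^{m-1}\sum_{j=0}^{i-1} a_i b_j .
\]
I would then show the Gauss area $A_{pq}$ of \eqref{eq:area} equals this. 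First note that the hypothesis $W\mathbb{1}=0$ gives $\sum_{k=0}^{m-1}a_k = e_p^\top(W\mathbb{1})=0$ and likewise $\sum_k b_k=0$; hence $x_{p,m}=y_{q,m}=0=x_{p,0}=y_{q,0}$, so the boundary conditions required by \eqref{eq:area} (with $n$ replaced by $m$) are met and the polygon is closed.

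Next I would simplify $A_{pq}$ by telescoping. Using the recursions $x_{p,i+1}=x_{p,i}+a_i$ and $y_{q,i+1}=y_{q,i}+b_i$ implied by \eqref{eq:lem_gen_area_points}, each summand of \eqref{eq:area} collapses:
\[
 x_{p,i+1}y_{q,i}-x_{p,i}y_{q,i+1} = a_i\,y_{q,i} - b_i\,x_{p,i}.
\]
Substituting $x_{p,i}=\sum_{j<i}a_j$ and $y_{q,i}=\sum_{j<i}b_j$ then yields the antisymmetric strictly-lower-triangular form
\[
 A_{pq} = \frac{1}{2}\sum_{i=0}^{m-1}\sum_{j=0}^{i-1}\big(a_i b_j - a_j b_i\big).
\]

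The crux is to reconcile this with $T(W)_{pq}$, which carries an extra diagonal term $\tfrac{1}{2}\sum_i a_i b_i$ and only the single off-diagonal sum $\sum_{j<i}a_i b_j$. Here the closure hypothesis enters a second time. Writing $D:=\sum_i a_i b_i$, $L:=\sum_i\sum_{j<i}a_i b_j$ and $U:=\sum_i\sum_{j>i}a_i b_j$, a relabeling $i\leftrightarrow j$ identifies $\sum_i\sum_{j<i}a_j b_i = U$, so that $A_{pq}=\tfrac{1}{2}(L-U)$. On the other hand $0=\big(\sum_i a_i\big)\big(\sum_j b_j\big)=D+L+U$ forces $U=-D-L$, whence
\[
 A_{pq}=\tfrac{1}{2}\big(L-(-D-L)\big)=L+\tfrac{1}{2} D = \sum_{i}\sum_{j<i}a_i b_j+\frac{1}{2}\sum_i a_i b_i = T(W)_{pq}.
\]
Since $p,q\in\{1,\ldots,2n\}$ are arbitrary, this entrywise identity proves the lemma. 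I expect the only delicate point to be the bookkeeping in this last step: correctly relabeling the mixed sum $\sum_{j<i}a_j b_i$ as the strictly-upper sum $U$, and then invoking $W\mathbb{1}=0$ to trade $U$ for $-D-L$. Everything preceding it is routine telescoping and reindexing.
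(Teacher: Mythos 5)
Your proof is correct and follows essentially the same route as the paper's: both sides are reduced to the antisymmetric double sum $\tfrac{1}{2}\sum_{i}\sum_{j<i}(a_i b_j - a_j b_i)$, with the closure condition $W\mathds{1}=0$ supplying the reconciliation between that form and the diagonal-plus-lower-triangular form of $T(W)_{pq}$. Your identity $0=\big(\sum_i a_i\big)\big(\sum_j b_j\big)=D+L+U$ is a tidier bookkeeping device than the paper's explicit elimination of $w_{m-1}$ followed by reindexing, but the argument is the same in substance.
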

\begin{proof}
	The entry in the $p$-th row and $q$-th column of the matrix $T(W)$ in \eqref{eq:taylor_matrix} with $[\alpha_1\ \alpha_2] = [\nicefrac{1}{2}\ \nicefrac{1}{2}]$ is given by 
	\begin{align}
	T(W)_{pq} = \sum_{i=0}^{m-1}\frac{1}{2}e_p^\top w_iw_i^\top e_q + \sum_{j=0}^{i-1}e_p^\top w_iw_j^\top e_q. 
	\end{align}
	Condition $W\mathds{1}=0$ implies $w_{m-1} = -\sum_{i=0}^{m-1} w_i$ such that
	we have
	\begin{align}
	T(W) &= \sum_{i=0}^{m-2}\Big( \frac{1}{2} w_iw_i^\top + \sum_{j=0}^{i-1} w_iw_j^\top \Big) 
	+ \frac{1}{2}w_{m-1}w_{m-1}^\top \nonumber \\ 
	&+ \sum_{j=0}^{m-2} w_{m-1}w_j^\top 
	\nonumber \\
	&= \sum_{i=0}^{m-2}\Big( \frac{1}{2} w_iw_i^\top + \sum_{j=0}^{i-1} w_iw_j^\top 
	+\frac{1}{2} \sum_{j=0}^{m-2} w_iw_j^\top \nonumber \\
	&- \sum_{j=0}^{m-2}w_iw_j^\top \Big)
	\nonumber \\
	&= \frac{1}{2} \sum_{i=0}^{m-2}\Big(w_iw_i^\top + \sum_{j=0}^{i-1} w_iw_j^\top-\sum_{j=i}^{m-2} w_iw_j^\top \Big)
	\nonumber \\
	&= \frac{1}{2}\sum_{i=0}^{m-2}\Big( \sum_{j=0}^{i-1} w_iw_j^\top -\frac{1}{2} \sum_{j=i+1}^{m-2} w_iw_j^\top \Big)
	\nonumber \\
	&= \frac{1}{2}\sum_{i=1}^{m-2}\sum_{j=0}^{i-1} (w_iw_j^\top - w_jw_i^\top),
	\label{eq:lem_area_1}
	\end{align}
	where the last line of \eqref{eq:lem_area_1} is obtained by the reindexing
	\begin{align}
	\sum_{i=0}^{m-2}\sum_{j=i+1}^{m-2}w_iw_j^\top &= w_0 (w_1+w_2+\cdots+w_{m-2})^\top 
	\nonumber \\
	&+ w_1 (w_2+w_3+\cdots+w_{m-2})^\top 
	\nonumber \\
	&+ \ldots 
	\nonumber \\
	&+ w_{m-3} w_{m-2}^\top
	=\sum_{i=1}^{m-2}\sum_{j=0}^{i-1}w_jw_i^\top.
	\end{align}
	Hence, it follows for the entry of $T(W)$ in the $p$-th row and $q$-th column
	\begin{align}
	T(W)_{pq} = \frac{1}{2}\sum_{i=1}^{m-2}\sum_{j=0}^{i-1} e_p^\top w_i w_j^\top e_q - e_p^\top w_j w_i^\top e_q.
	\end{align}
	On the other hand, the net area formula of a $n$-sided polygon with corner points $\{x_{p,i},y_{q,i}\}_{i=1}^{n}$ is given as 
	\begin{align}
	A_{pq} = \frac{1}{2} \sum_{i=0}^{n-1}x_{p,i+1}y_{q,i} - x_{p,i} y_{q,i+1}.
	\label{eq:lem_gen_area_area}
	\end{align}
	Plugging \eqref{eq:lem_gen_area_points} with $n=m$ in \eqref{eq:lem_gen_area_area} 
	and using  $\sum_{i=0}^{m-1} w_i=0$
	yields 
	\begin{align}
	A_{pq} &= \frac{1}{2} \sum_{i=0}^{m-1} \Big( \sum_{j=0}^{i} e_p^\top w_j \Big) \Big( \sum_{j=0}^{i-1}  w_j^\top e_q \Big) 
	\nonumber
	\\
	&- \Big( \sum_{j=0}^{i-1} e_p^\top w_j \Big) \Big( \sum_{j=0}^{i}  w_j^\top e_q \Big) 
	\nonumber \\ 
	&= \frac{1}{2} \sum_{i=1}^{m-2}  e_p^\top w_i \Big( \sum_{j=0}^{i-1}  w_j^\top e_q \Big) \nonumber \\
	&- \Big( \sum_{j=0}^{i-1} e_p^\top w_j \Big)  w_i^\top e_q.
	\label{eq:lem_gen_area_area_calc}
	\end{align}
	Consequently, $A_{pq} = T(W)_{pq}$ holds. 
\end{proof}
\section{PROOFS}
\subsection{Proof \cref{thm:cond_equ}}
\label{sec:proof_thm_cond_equ}
The proof utilizes the result of \cref{thm:taylor_gen}. Consider the $m$-th step of the evolution of \eqref{eq:algo} represented by \eqref{eq:taylor_gen} with transition map \eqref{eq:TM}. Let $w_\ell = [u_\ell^\top\ v_\ell^\top]^\top$ for $\ell = k,\ldots,k+m-1$ and 
\begin{align}
Y(f(z),g(z)) &= \begin{bmatrix}f(z)I & g(z)I \end{bmatrix}, \\
\tilde{Y}(f(z),g(z)) &= \begin{bmatrix}\frac{\partial f}{\partial z}(z)I& \frac{\partial g}{\partial z}(z)I \end{bmatrix}.
\end{align}
Then plugging $\{w_\ell\}_{\ell = k}^{k+m-1},\ Y(f(J(x_k)),g(J(x_k)))$ and $\tilde{Y}(f(J(x_k)),g(J(x_k)))$ into \eqref{eq:taylor_gen} yields 
\begin{align}
&x_{k+m} = x_k +\sqrt{h} (\alpha_1+\alpha_2) Y(f(J(x_k)),g(J(x_k))) \sum_{i=k}^{k+m-1}w_i
\nonumber \\
&+ h \tilde{Y}(f(J(x_k)),g(J(x_k))) \left\lbrace \sum_{i=k}^{k+m-1}  \left( \alpha_2 w_iw_i^\top \right.\right. \nonumber \\
&\left.\left.+(\alpha_1+\alpha_2)^2 \sum_{j=k}^{i-1} w_iw_j^\top \right)\right\rbrace Y(f(J(x_k)),g(J(x_k)))^\top \nonumber\\ 
&\times \nabla J(x_k)  + \mathcal{O}(h^{3/2}).
\label{eq_p1:1}
\end{align}
The term in the curly brackets in \eqref{eq_p1:1} yields $T(W)$ in \eqref{eq:cond_equ_gen} and therefore \eqref{eq:taylor_matrix} is recovered.
$\hfill \small{\square}$
\subsection{Proof \cref{thm:matrices}}
\label{sec:proof_matrices}
Condition \eqref{eq:cond_equ_gen_zero} implies 
\begin{align}
w_{m-1} = - \sum_{i=0}^{m-2}w_i.
\label{eq:1}
\end{align}
Plugging \eqref{eq:1} into \eqref{eq:cond_equ_gen} yields 
\begin{align}
T(W) &= \sum_{i=0}^{m-2}\Big( \alpha_2 w_iw_i^\top + (\alpha_1 + \alpha_2)^2 \sum_{j=0}^{i-1} w_iw_j^\top  \Big)
\nonumber \\
&+\alpha_2 w_{m-1}w_{m-1}^\top + (\alpha_1+\alpha_2)^2\sum_{i = 0}^{m-2} w_{m-1}w_i^\top
\nonumber  \\
&= \alpha_2 \sum_{i=0}^{m-2}\Big(w_iw_i^\top + \sum_{j=0}^{m-2} w_iw_j^\top \Big)
\nonumber  \\
&+(\alpha_1+\alpha_2)^2\sum_{i=0}^{m-2}\Big(\sum_{j=0}^{i-1}w_iw_j^\top -\sum_{j=0}^{m-2}w_iw_j^\top \Big)
\nonumber \\
&= \alpha_2 \sum_{i=0}^{m-2}\Big(w_iw_i^\top + \sum_{j=0}^{m-2} w_iw_j^\top \Big)
\nonumber  \\
\end{align}
\begin{align}
&-(\alpha_1+\alpha_2)^2\sum_{i=0}^{m-2}\sum_{j=i}^{m-2}w_iw_j^\top 
\nonumber \\
&= (\alpha_2 - (\alpha_1 + \alpha_2)^2 )\sum_{i=0}^{m-2}\sum_{j=i}^{m-2} w_iw_j^\top
\nonumber \\
&+\alpha_2\sum_{i=0}^{m-2}\sum_{j=0}^{i} w_iw_j^\top.
\end{align}
Hence, $P$ in \eqref{eq:def_mat} is recovered.
$\hfill \small{\square}$
\subsection{Proof \cref{thm:existence_svd}}
\label{sec:proof_existence_svd}
Consider \eqref{eq:svd_prob} and the singular value decomposition of the exploration sequence matrix
\begin{align}
	W = U\Sigma V^\top
	\label{eq:svd}
\end{align} 
with
\begin{align}
	\bullet\ \ &U = [a_1\ b_1\ \cdots a_n\ b_n],
	\label{pf:construct_U}
	\\	
	\bullet\ \ &\Sigma = 
	\begin{bmatrix}	\Sigma_0 & 0 \\	0 & 0 \end{bmatrix},
	\ 
	\Sigma_0 = \text{diag}([\sigma_1\ \cdots \sigma_{r}])
	\label{pf:construct_sigma} 
	\\
	\bullet\ \ &V = 
	\begin{bmatrix}
		\Theta - \epsilon\mathds{1}_{}\mathds{1}_{}^\top \Theta &\!\!\! m^{-1/2} \mathds{1}_{} 
			\\ -\mathds{1}_{}^\top\Theta  + \epsilon (m-1)\mathds{1}_{}^\top \Theta &\!\!\! m^{-1/2}
	\end{bmatrix}\ \text{with}
	\label{pf:construct_V}
	\\
	&\epsilon = (m-1)^{-1}(1-m^{-1/2}) \text{ and some }  \nonumber
	\\
	&\Theta \in \mathbb{R}^{(m-1)\times (m-1)} \text{ s.t. } \Theta^\top \Theta = \Theta \Theta^\top = I  \nonumber
\end{align}
Hereby, $a_\ell \pm b_\ell i$ with $a_\ell,b_\ell \in \mathbb{R}^{2n}$ for $\ell = 1,\ldots,n$ are the eigenvectors of $T_d$. Since $U$, as defined in \eqref{pf:construct_U}, is constructed by the real and imaginary parts of the eigenvectors of the matrix $T_d$, $U$ is orthogonal \cite{connes1998noncommutative}. Moreover, 
\begin{align}
	\begin{split}
	&X:=U^\top T_d U = \text{diag}([C_1\ \cdots\ C_n]),
	\\
	&\text{with }C_\ell = 
	\begin{bmatrix}
		\gamma_\ell & -\delta_{\ell} \\ \delta_{\ell} & ~~~\gamma_\ell
	\end{bmatrix},\ \ell = 1,\ldots,n,
	\label{pf_svd:X}
	\end{split}
\end{align}
where $\gamma_{\ell} \pm \delta_{\ell} i$ are the eigenvalues of $T_d$ with $(2\alpha_2-(\alpha_1+\alpha_2)^2)\gamma_\ell \in \mathbb{R}_{\ge 0}$ and $\delta_\ell \in \mathbb{R}_{\ge 0}$. Note that $\gamma_\ell = 0$ and $\delta_\ell = 0$ for $\ell > \text{rk}(T_d)$.
The orthogonality of $V$, given in \eqref{pf:construct_V}, is shown by direct evaluation:
\begin{align}
	V^\top V &= 
	\begin{bmatrix}
		\hat{V}_{11} & 0 \\
		0 & 1
	\end{bmatrix}\quad \text{with}
	\label{pf:2_7}
	\\
	\hat{V}_{11}&=I + (1-2\epsilon m + \epsilon^2m(m-1))\Theta^\top\allone{}\allone{}^\top \Theta, \nonumber
	\\
	VV^\top &= 
	\begin{bmatrix}
		\tilde{V}_{11}  & \tilde{V}_{12} \\
		\tilde{V}_{12}^\top & \tilde{V}_{22}
	\end{bmatrix} \quad \text{with} 	
	\label{pf:2_8}
	\\
	\tilde{V}_{11} &= I + (\epsilon^2(m-1) + m^{-1} - 2\epsilon)\allone{}\allone{}^\top 
	\nonumber \\
	\tilde{V}_{12}&=-(\epsilon^2(m-1)^2-2\epsilon(m-1) -m^{-1} + 1 ) \allone{}
	\nonumber \\
	 \tilde{V}_{22}&= \epsilon^2(m-1)^3 - 2\epsilon (m-1)^2 + m^{-1} + m - 1
	 \nonumber
\end{align}
where we used the fact that $\Theta$ in \eqref{pf:construct_V} is orthogonal. By plugging $\epsilon = (m-1)^{-1}(1-m^{-1/2})$ into \eqref{pf:2_7} and \eqref{pf:2_8}, the orthogonality of $V$, i.e., $VV^\top = V^\top V = I$ is recovered. 
Now plugging \eqref{eq:svd} into \eqref{eq:svd_prob} associated with \eqref{pf_svd:X} reveals 
\begin{align}
	\begin{bmatrix}
		\Sigma_0 & 0 \\
		0 & 0
	\end{bmatrix}
	V^\top P V 
	\begin{bmatrix}
		\Sigma_0 & 0 \\
		0 & 0
	\end{bmatrix} 
	= X,
	\label{pf:2_9}
\end{align}
where 
\begin{align}
	&\!\!\!Q := V^\top P V = 
	\begin{bmatrix}
		\tilde{Q} & * \\ * & *
	\end{bmatrix}
	\ \text{with}\ \tilde{Q} = \Theta^\top \corrected{\tilde{P}} \Theta\ \text{and} 
	\label{pf:2_Y}
	\\
	&\!\!\!\tilde{P} = \big(P-\epsilon(\mathds{1}\mathds{1}^\top P + P\mathds{1}\mathds{1}^\top)+\epsilon^2 \mathds{1}\mathds{1}^\top P \mathds{1}\mathds{1}^\top\big)_{1:m-1}
	\label{pf:2_P_}
\end{align}
with $\tilde{P} \in \mathbb{R}^{(m-1) \times (m-1)}$, which can be written as
\begin{align}
\tilde{P} &= (\alpha_2-\frac{1}{2}(\alpha_1+\alpha_2)^2)
\nonumber \\
&\times 
\big[I+(\underbrace{m(m-1)\epsilon^2-\corrected{2m\epsilon}+1}_{=0})\mathds{1}\mathds{1}^\top  \big] 
\nonumber	\\
&\!\!+
\frac{1}{2}(\alpha_1+\alpha_2)^2
\begin{bmatrix}
0\! & \! d_1 \! &\! d_2 \!& \!\ldots\! &\! d_{m-2} \\
-d_1\! & \!\ddots\! & \!\ddots\! & \!\ddots\! & \!\vdots \\
-d_2\! & \!\ddots\! & \!\ddots & \!\ddots\! & \!d_2  \\
\vdots\! & \! \ddots \! & \!\ddots\! & \! \ddots\! & \!d_1 \\
-d_{m-2}\! & \!\cdots\! & \!-d_2\! & \!-d_1\! & \!0
\end{bmatrix}	
\label{pf:2_P_1}
\end{align}
with $d_i = 2i\epsilon-1$ for $i=1,\ldots,m-2$ and $\epsilon$ defined in \eqref{pf:construct_V}, where $\tilde{P}\tilde{P}^\top - \tilde{P}^\top \tilde{P} = 0$, hence $\tilde{P}$ normal. More precise, $\tilde{P}$ has complex conjugated eigenvalues $\mu_{\ell} \pm \omega_{\ell} i$ with $\ell = 1,\ldots,\lceil (m-1)/2\rceil$  where $\mu_\ell = \mu = \alpha_2-\nicefrac{1}{2}(\alpha_1+\alpha_2)^2$ and the skew-symmetric part is a Toeplitz matrix (third line of \eqref{pf:2_P_1}).
Eventually, \eqref{pf:2_9} impose the conditions
\begin{align}
	X_{1:r} &= \Sigma_0 \tilde{Q}_{1:r} \Sigma_0,
	\label{pf:2_cond}
	\\
	X_{r+1:n} &= 0.
	\label{pf:2_cond_1}
\end{align}
with $r=\text{rk}(T_d)$. Then, \eqref{pf:2_cond_1} holds, since there exist $n-r-1$ eigenvalues identical to zero and $U$ can be ordered, accordingly. 
Additionally, let $\Theta$ in \eqref{pf:construct_V} be of the form such that 
\begin{align}
\begin{split}
&\tilde{Q}_{1:r} = \text{diag}([D_1\ \cdots\ D_{\lceil r/2 \rceil} ])  \\
&\text{with } D_\ell = 
\begin{bmatrix}
\mu  & -\hat{\omega}_\ell \\ \hat{\omega}_\ell & \mu
\end{bmatrix},
\ell = 1,\ldots,\lceil r/2 \rceil
\end{split}
\label{pf:2_Y_1r}
\end{align}
holds, where the imaginary part of the eigenvalues of the principal submatrix $\tilde{Q}_{1:r}$ of $\tilde{Q}$ in \eqref{pf:2_Y} is denoted by $\pm \hat{\omega}_k i$ for $k=1,\ldots,\lceil r/2 \rceil$.
Then \eqref{pf:2_cond} implies for $k = 1,\ldots,\lceil r/2 \rceil$
\begin{align}
&\mu \sigma_{2k -1}^2 = \mu \sigma_{2k}^2 = \gamma_k  \quad \text{and} 
\label{pf:2_cond1}
\\
&\sigma_{2k-1}\sigma_{2k} \hat{\omega}_k = \delta_k  .
\label{pf:2_cond2}
\end{align}
In the case of $2\alpha_2 - (\alpha_1+\alpha_2)^2 = 0$, $T_d$ and $\tilde{P}$ are skew-symmetric due to the assumption in \cref{thm:existence_svd} and \eqref{pf:2_P_1}, respectively. Hence, $\gamma_{\ell} = 0$ for $\ell = 1,\ldots,n$, and $\mu = 0$, which implies that \eqref{pf:2_cond1} is satisfied. Equation \eqref{pf:2_cond2} is satisfied for $k=1,\ldots,\lceil r/2 \rceil$ with 
\begin{align}
 &\sigma_{2k} = \delta_k\omega_k^{-1}\sigma_{2k-1}^{-1} \quad \text{and} \quad  \sigma_{2k -1} \in\mathbb{R}_{>0}
\label{pf:2_cond_skew}
\end{align}
where $m=r+1$ and therefore $\hat{\omega}_k = \omega_k$. Then $\Theta$ is constructed as orthogonal transformation similar to $U$. 
In the case  $2\alpha_2 - (\alpha_1+\alpha_2)^2 \neq 0$, \eqref{pf:2_cond1} and \eqref{pf:2_cond2} together yield
\begin{align}
\sigma_{2k-1}^2 = \sigma_{2k}^2 = \frac{\delta_k}{\hat{\omega}_k} = \frac{\gamma_k}{\mu},
\label{pf:2_cond_normal}
\end{align}
for $k=1,\ldots,\lceil r/2 \rceil$, hence 
\begin{align}
\hat{\omega}_k = \frac{\delta_k}{\gamma_k}\mu
\label{pf:2_w_}
\end{align}
has to be satisfied. Note that $\delta_k,\ \gamma_k,$ and $\mu$ are specified by $T_d$ and $\alpha_1,\alpha_2$ and $\mu/\gamma_k \ge 0$ due to the positive definiteness condition in \Cref{thm:existence_svd}. Applying \cref{thm:fan_pall_mult} to $\tilde{P}$ in \eqref{pf:2_Y} implies that there exists a $\Theta$ such that \eqref{pf:2_Y_1r} and the interlacing property
\begin{align}
\omega_{k} \ge \hat{\omega}_k \ge \omega_{\lceil (m-1)/2 \rceil - \lceil r/2 \rceil +k},\ k=1,\ldots,\lceil r/2 \rceil
\label{pf:2_interlacing1}
\end{align}
holds, where $\hat{\omega}_k$ can be chosen in the given intervals. Note that \cref{thm:fan_pall_mult} can be applied to the normal matrix $\tilde{P}$, due to the decomposition of a scaled unit and skew-symmetric matrix \cite{connes1998noncommutative}.
W.l.o.g., $\delta_k/\gamma_k$ and $\hat{\omega}_k$ are in decreasing order. Then by applying \Cref{property:interlacing} (cf. \Cref{rem:pm}) to $\tilde{P}$ in \eqref{pf:2_P_} successively, there exists a $m \ge r+1$ such that \eqref{pf:2_interlacing1} hold with \eqref{pf:2_w_} for all $k = 1,\ldots,\lceil r/2 \rceil$.
${\color{white}oooooooooooo}\hfill \small{\square}$

\subsection{Proof \cref{thm:structure_W_H}}
\label{sec:p_sructure_W_H}
Let $2\alpha_2-(\alpha_1+\alpha_2)^2 = 0$ and $T(W)=T_d$ is partitioned as in \eqref{eq:structure_T}. Since $T_d$ has to be skew-symmetric (see also \Cref{thm:existence_svd}), the eigenvalues of $T_d$ are purely imaginary. Then condition \eqref{eq:problem} with $T_{12} = -T_{21}^\top = -R\in\mathbb{R}^{n\times n}$, where $R$ is arbitrary, reads
\begin{align}
	 - I &= g'(z)f(z)R^\top-f'(z)g(z)R \nonumber \\
	&+ f'(z)f(z) T_{11} + g'(z)g(z) T_{22}.
	\label{eq_p2:1}
\end{align}
Since, $T_{11} = - T_{11}^\top$, $T_{22} = -T_{22}^\top$, w.l.o.g. we can express $R$ as  $R = I + \tilde{R}$, where $\text{diag}(\tilde{R}) = 0$. Hence, it has to hold 
\begin{align}
	g'(z)f(z)-f'(z)g(z) = -1,
	\label{eq_p2:2}
\end{align} 
which is satisfied by the generating functions \cite[Theorem 1]{GRUSHKOVSKAYA2018151}
\begin{align}
	g(z) = -f(z)\int \frac{1}{f^2(z)} dz.
	\label{eq_p2:3}
\end{align}
Assume that generating functions $f$ and $g$ as in \eqref{eq_p2:3} satisfy \eqref{eq_p2:2}, then condition \eqref{eq_p2:1} translates into 
\begin{align}
    f'(z)f(z) T_{11}\! +\! g'(z)g(z) T_{22} \nonumber \\
	= (f'(z)g(z)+g'(z)f(z))\tilde{R},
	\label{eq_p2:3_1}
\end{align} 
implying that $\tilde{R} = -\tilde{R}^\top$ holds. 
Next we consider three cases:
\\\\
\textbf{Case 1: $\tilde{R} = 0$}. Hence,
\begin{align}
	f'(z)f(z) T_{11} + g'(z)g(z) T_{22} = 0.
	\label{eq_p2:4}
\end{align}
Clearly, \eqref{eq_p2:4} is satisfied by $T_{11} = T_{22} = 0$ with $f$ arbitrary while satisfying $g$ in \eqref{eq_p2:3}, i.e., \eqref{eq:cases_H1} results. 
For the sub-case $a^{-1}T_{11} = b^{-1} T_{22} =: Q$ arbitrary skew-symmetric with $a,b\in \mathbb{R}_{>0}$, $f$ and $g$ have to satisfy $af'(z)f(z)+b g'(z)g(z) = 0$, i.e., w.l.o.g. $af^2(z)+b g^2(z) = 1$. Accordingly, with \eqref{eq_p2:3} and $y'(z) = f^{-2}(z)$ it yields
\begin{align}
	y'(z) = a + b y^2(z).
	\label{eq_p2:5}
\end{align}
The unique solution of \eqref{eq_p2:5} with $\phi \in \mathbb{R}$ is
\begin{align}
	y(z) = \sqrt{\frac{a}{b}}\tan\Big(\sqrt{a b} z + \phi \Big)
	\label{eq_p2:5_2}
\end{align}
and therefore with the definition of $y(z)$ and \eqref{eq_p2:3} one reveals \eqref{eq:cases_H2}. 
Repeating the above calculations for $a^{-1} T_{11} = - b^{-1} T_{22} =: Q$ arbitrary skew-symmetric yields \begin{align}
	y(z) = \sqrt{\frac{a}{b}}\tanh\Big(\sqrt{ab} z + \phi \Big)
\end{align}
and therefore with the definition of $y(z)$ and \eqref{eq_p2:3} one reveals \eqref{eq:cases_H3}.  
The remaining sub-case $T_{11} \neq \pm a T_{22}$, implies that 
\begin{align}
	f'(z)f(z) T_{11} = 0\ \ \text{and}\ \ g'(z)g(z) T_{22} = 0,
	\label{eq_p2:5_1}
\end{align}
since $	f'(z)f(z) T_{11} + g'(z)g(z) T_{22} = 0$ must hold. 
If $f'(z)f(z) = 0$ and $T_{11}=Q$ arbitrary skew-symmetric, i.e., $f^2(z) = a$ with $a\in\mathbb{R}_{>0}$, it implies that $f(z) = \sqrt{a}$. Hence, with \eqref{eq_p2:3}, $g'(z)g(z) \neq 0$ for all $z\in\mathbb{R}$ yields $T_{22} = 0$, i.e. \eqref{eq:cases_H4} results.
The same argumentation holds for $g'(z)g(z) = 0$ with arbitrary skew-symmetric $T_{22}$ such that \eqref{eq:cases_H5} is recovered. 
The circumstance that  $T_{11} \neq T_{22}$ with $T_{11} \neq 0$ and $T_{22} \neq 0$ is not valid due to \eqref{eq_p2:3} and \eqref{eq_p2:5_1}. Specifically, $f'(z)f(z) = 0$ and $g'(z)g(z) = 0$ has to hold; obviously, based on the above cases, $f(z)=\sqrt{a}$ and $g(z)=\sqrt{a}$ are in conflict with \eqref{eq_p2:2}.
\\\\
%
%
\textbf{Case 2:} $f'(z)g(z)+g'(z)f(z) = 0$ for all $z\in \mathbb{R}$. Hence, 
\begin{align}
    &f'(z)f(z) T_{11}\! +\! g'(z)g(z) T_{22} = 0 \text{ and } 
    \label{eq_p2:6_00}
    \\
    &f'(z)g(z)+g'(z)f(z) = 0
    \label{eq_p2:6_01}
\end{align} 
has to be satisfied. Clearly, \eqref{eq_p2:6_00} is satisfied by $T_{11} = T_{22} = 0$, where \eqref{eq_p2:6_01} implies $-af(z)g(z)=1$ with $a \in \mathbb{R}_{>0}$. Accordingly, with \eqref{eq_p2:3} and $y'(z) = f^{-2}(z)$ it yields
\begin{align}
     y'(z) = ay(z) .
     \label{eq_p2:6_1}
\end{align} 
 The unique solution of \eqref{eq_p2:6_1} with $c\in\mathbb{R}$ is 
\begin{align}
	y(z) = e^{az}+c
	\label{eq_p2:7}
\end{align}
and therefore with the definition of $y(z)$ and \eqref{eq_p2:3} one gets \eqref{eq:cases_H6}.
The sub-cases where $a^{-1}T_{11} = b^{-1} T_{22} =: Q$ arbitrary skew-symmetric or  $T_{11} \neq \pm a T_{22}$ as discussed for Case 1 are not valid. With the same approach as above, i.e., $y'(z) = f^{-2}$ it yields to $y'(z) = 0$ and therefore no solution for $f(z)$ (and $g(z)$) can be found or \eqref{eq_p2:6_00} and \eqref{eq_p2:6_01} are not satisfied as discussed in the last paragraph of Case 1, respectively. 
\\\\
\textbf{Case 3: $\tilde{R} = -\tilde{R}^\top \neq 0$}. Hence, 
\begin{align}
    f'(z)f(z) T_{11}\! +\! g'(z)g(z) T_{22} \nonumber \\
	= (f'(z)g(z)+g'(z)f(z))\tilde{R}.
	\label{eq_p2:case3}
\end{align} 
Clearly, $T_{11} = T_{22} = 0$ is not valid, since $(f'(z)g(z)+g'(z)f(z))\tilde{R} \neq 0$ in this last case.
For the sub-case \corrected{$a^{-1}T_{11} = b^{-1} T_{22} =: Q = -Q^\top \neq 0$ with $a,b \in \mathbb{R}_{>0}$, it has to hold that $a f'(z)f(z) + b g'(z)g(z) = c(f'(z)g(z)+g'(z)f(z))$ and $\tilde{R} = cQ$ with $c \in \mathbb{R}\backslash{\{0\}}$, i.e., w.l.o.g. $\nicefrac{a}{2}f^2(z) + \nicefrac{b}{2} g^2(z) -c f(z)g(z)= 1$. Accordingly, with \eqref{eq_p2:3}} and $y'(z) = f^{-2}(z)$ it yields
\begin{align}
     y'(z) = \frac{a}{2} + cy(z) + \frac{b}{2} y^2(z).
	\label{eq_p2:6}
\end{align} 
The unique solution of \eqref{eq_p2:6} with $\phi \in \mathbb{R}$ is 
\begin{align}
	y(z) = \frac{\sqrt{ab-c^2}}{b} \tan\Big(\sqrt{ab-c^2} z + \phi \Big)
	\label{eq_p2:7}
\end{align}
and therefore with the definition of $y(z)$ and \eqref{eq_p2:3} one reveals \eqref{eq:cases_H7}.
The remaining sub-case $T_{11} \neq a T_{22}$, implies that
\begin{align}
&f'(z)f(z) T_{11} - (f'(z)g(z)+g'(z)f(z)) \tilde{R} = 0 \quad \text{and} 
\nonumber \\
&g'(z)g(z) T_{22} = 0, \quad \text{or} 
\label{eq_p2:8}
\\
&g'(z)g(z) T_{22} - (f'(z)g(z)+g'(z)f(z)) \tilde{R} = 0 \quad \text{and} 
\nonumber \\
&f'(z)f(z) T_{11} = 0, 
\label{eq_p2:9}
\end{align}
since \eqref{eq_p2:case3} must hold. However, we show in the sequel that \eqref{eq_p2:8} and \eqref{eq_p2:9} lead to no new solution or is not valid, respectively. 
For \eqref{eq_p2:8}, \corrected{$a^{-1}T_{11} = c^{-1} \tilde{R} =: Q = -Q^\top \neq 0$} with $a \in \mathbb{R}_{>0}$ and $c \in \mathbb{R}\backslash{\{0\}}$, it has to hold that \corrected{$a f'(z)f(z) = c(f'(z)g(z)+g'(z)f(z))$}, i.e., w.l.o.g. 
\begin{align}
    \frac{a}{2} f^2(z) -cf(z)g(z)= 1.
    \label{eq_p2:9_1}
\end{align}
Accordingly, with \eqref{eq_p2:3} and $y'(z) = f^{-2}(z)$ it yields
\begin{align}
     y'(z) = \frac{a}{2} + cy(z).
	\label{eq_p2:9_2}
\end{align} 
The unique solution of \eqref{eq_p2:9} with $d \in \mathbb{R}$ is 
\begin{align}
	y(z) = de^{cz}-\frac{a}{2c}
	\label{eq_p2:10}
\end{align}
and therefore with the definition of $y(z)$ and \eqref{eq_p2:3} one gets $f(z) = (cd)^{-\nicefrac{1}{2}}\text{exp}(-\nicefrac{c}{2}\, z)$ and $g(z) = -(d/c)^{-\nicefrac{1}{2}}\text{exp}(\nicefrac{c}{2}\, z)$ such that $T_{22} = 0$. However, $f,g$ satisfying \eqref{eq_p2:9_1} for all $z\in \mathbb{R}$ only for $a=0$, hence $T_{11}=0$, i.e, the same result as \eqref{eq:cases_H6}.
For \eqref{eq_p2:9}, \corrected{$b^{-1}T_{11} = c^{-1} \tilde{R} =: Q = -Q^\top \neq 0$} with $a \in \mathbb{R}_{>0}$ and $c \in \mathbb{R}\backslash{\{0\}}$, it has to hold that \corrected{$b g'(z)g(z) = c(f'(z)g(z)+g'(z)f(z))$}, i.e., w.l.o.g. 
\begin{align}
    \frac{b}{2} g^2(z) -cf(z)g(z)= 1.
    \label{eq_p2:10_1}
\end{align}
Following the procedure above yields $f(z) = (2)^{-\nicefrac{1}{2}} c^{-1} (b \text{exp}(\nicefrac{c}{2}\, z + \nicefrac{d}{2})-\text{exp}(-\nicefrac{c}{2}\, z - \nicefrac{d}{2}))$ and $g(z) = 2^{\nicefrac{1}{2}}b^{-1}\text{exp}(-\nicefrac{c}{2}z - \nicefrac{d}{2})$ with $d\in\mathbb{R}$. However, \eqref{eq_p2:10_1} is only satisfied for $d=-cz-\text{ln}(b)$ which leads to $f(z) = 0$ and therefore no valid solution. 
Notice that every feasible structure of the skew-symmetric matrix $T_d$ is discussed above case by case, and the differential equations arising in the analysis are solved uniquely. Hence, we believe that the list of triples $(T_d,f,g)$ in \cref{thm:structure_W_H} for $2\alpha_2-(\alpha_1+\alpha_2)^2=0$ and $T_d$ skew-symmetric is essentially exhaustive, save for some scaled version of the presented cases.
$\hfill \small{\square}$
\subsection{Proof \cref{thm:structure_W_E}}
\label{sec:p_sructure_W_E}
Let $T_d$ be normal and $(2\alpha_2-(\alpha_1+\alpha_2)^2)(T_d+T_d^\top)$ be positive definite and let $T(W)=T_d$ be partitioned as in \eqref{eq:structure_T}. Then equation \eqref{eq:problem} reads
\begin{align}
	- I &=f'(z)f(z) T_{11} + f'(z)g(z)T_{12} \nonumber \\
	&+g'(z)f(z)T_{21}  + g'(z)g(z) T_{22}.
	\label{eq_p3:1}
\end{align}
Choosing $T_d$ ($T(W)=T_d$) as in \eqref{eq:cases_E1}, it holds that  $(2\alpha_2-(\alpha_1+\alpha_2)^2)(T_d+T_d^\top)$ is positive definite, yielding 
\begin{align}
	- 1 &= a \Big(f'(z)f(z) + g(z)'g(z)\Big)
	\nonumber 
	\\
	&+  g'(z)f(z)-f'(z)g(z).
	\label{eq_p3:2}
\end{align}
This equation has been considered in \cite{jfESNCM}. We refer to the proof of \cite[Theorem 1]{jfESNCM} for the derivation of $f$ and $g$ as specified in \eqref{eq:cases_E1}.
Case \eqref{eq:cases_E2} is analogous to \eqref{eq:cases_H2}. First, note that $(2\alpha_2 - (\alpha_1+\alpha_2))(T_d+T_d^\top)$ is positive definite and normal with the given $T_d$ in \eqref{eq:cases_E2} and $(2\alpha_2 - (\alpha_1+\alpha_2))(Q+Q^\top)$ is positive definite and normal, since 
\begin{align}
    T_dT_d^\top - T_d^\top T_d = \begin{bmatrix} QQ^\top - Q^\top Q & 0 \\ 0 & QQ^\top - Q^\top Q  \end{bmatrix} 
\end{align}
and 
\begin{align}
    T_d + T_d^\top = \begin{bmatrix} Q + Q^\top  & 0 \\ 0 & Q + Q^\top  \end{bmatrix}, 
\end{align}
such that the real part of the eigenvalues of $Q$ is identical to that of $T_d$ and therefore the definiteness property is conserved.
Hence, the derivation of the generating functions $f$ and $g$ in this case goes along the lines of arguments as used in the proof of \cref{thm:structure_W_H} in \cref{sec:p_sructure_W_H}, specifically for the case \eqref{eq:cases_H2}, i.e. \eqref{eq_p2:5} and \eqref{eq_p2:5_2}.
$\hfill \small{\square}$
The structure of normal matrices $T_d$ brings more degrees of freedom compared to $T_d$ skew-symmetric as in \Cref{thm:structure_W_H} and thus the two cases listed in \cref{thm:structure_W_E} are not exhaustive.
\section{CONSTRUCTION OF EXPLORATION SEQUENCE MATRIX}
In this section we summarize the main steps to construct an exploration sequence matrix $W$ based on the constructive proof of \Cref{thm:existence_svd} in \Cref{sec:proof_existence_svd}. A \texttt{MATLAB} toolbox of the described construction procedure in the
sequel can be found in the ancillary file folder on Arxiv.

\subsection{Step-by-step Construction of $W$}
\label{sec:construction_svd}
The exploration sequence matrix $W\in\mathbb{R}^{2n\times m}$ with dimension of the optimization variable $n$ and exploration sequence length $m$ is constructed by a singular-value decomposition 
\begin{align}
	W = U\Sigma V^\top
	\label{algo_ES:svd}
\end{align}
with $U\in\mathbb{R}^{2n\times 2n}$, $\Sigma\in\mathbb{R}^{2n\times m}$, and $V \in \mathbb{R}^{m\times m}$, where $m$ has to be determined. In the following $U,\Sigma,V$ as stated in \eqref{pf:construct_U} to \eqref{pf:construct_V} are constructed.
%

\textbf{Step 1}: \textit{Choose design parameters.} Select the map parameters $\alpha_1,\alpha_2\in\mathbb{R}$ with $\alpha_1+\alpha_2 \neq 0$, and $T_d \in \mathbb{R}^{2n\times 2n}$ according to  \cref{thm:structure_W_H} and \cref{thm:structure_W_E} with $r = \text{rk}(T_d)$. 
%

%
\textbf{Step 2}: \textit{Calculate eigenvalues and eigenvectors of $T_d$}.
Calculate eigenvalues of $T_d$ with $\gamma_{\ell}\pm\delta_{\ell}i$ where $\gamma_\ell \in \mathbb{R},\ \delta_{\ell}\in \mathbb{R}_{\ge 0}$ and eigenvectors of $T_d$ with $a_{\ell}\pm b_{\ell}i$ where $a_\ell,b_\ell \in \mathbb{R}^{2n}$ for $\ell = 1,\ldots,n$. The eigenvalues
are sorted according to $\delta_1 \ge \delta_2 \ge \cdots \ge \delta_n \ge 0$. Note that $\gamma_{\ell} = 0$ if $T_d$ skew-symmetric and $\gamma_{p} = 0$, $\delta_{p} = 0$ for $p = \lceil r/2\rceil+1,\ldots,n$.
%

%
\textbf{Step 3}: \textit{Calculate $U$}.
Construct as specified in \eqref{pf:construct_U}. 
%

%
\textbf{Step 4}: \textit{Design $\Sigma$.} 
The principal submatrix of $\Sigma$ is constructed as $\Sigma_0 = \text{diag}(\sigma_1,\ldots,\sigma_r) \in \mathbb{R}^{r\times r}$ with the singular values $\sigma_k$ of $W$. This step has various degrees of freedom to influence the sequence length $m$; distinguished in the following: 
\begin{enumerate}[leftmargin=*, label=\Roman*.]
    \item $T_d$ skew-symmetric
    \begin{enumerate}[label=\roman*), leftmargin=15pt]
        \item $m=r+1$
        \item $m\ge r+1$ ($m$ determined in Step 5)
    \end{enumerate}
    \item $T_d$ as in \eqref{eq:Td_E} 
    \begin{enumerate}[label=\roman*), leftmargin=15pt]
        \item $m=2n+1$ 
        \item $m \ge 2n+1$ ($m$ determined in Step 5)
    \end{enumerate}
    \item $T_d$ normal and $(2\alpha_2-(\alpha_1+\alpha_2)^2)(T_d+T_d^\top)$ positive definite ($m$ determined in Step 5)
\end{enumerate}
For I.i) and II.i), calculate the eigenvalues of $\tilde{P}(m)$ in \eqref{eq:P_tilde} with $\mu\pm \omega_k i$ for $k=1,\ldots,\lceil m/2 \rceil$, $\mu \in \mathbb{R},\omega_k \in \mathbb{R}_{\ge 0}$ and $\omega_1\ge \omega_2 \ge \cdots \ge \omega_{\lceil m/2 \rceil}\ge 0$. 
The choices of $\sigma_{2\ell-1},\sigma_{2\ell}$ for $\ell = 1,\ldots,\lceil r/2 \rceil$ for each case above is presented in the following:
\begin{enumerate}[leftmargin=24pt]
    \item[I.i)] $\sigma_{2\ell} = \delta_\ell\omega_\ell^{-1}\sigma_{2\ell-1}^{-1} \quad \text{and} \quad  \sigma_{2\ell -1} \in\mathbb{R}_+$
    \item[I.ii)] $\sigma_{q} \in \mathbb{R}_{+},\ q=1,\ldots,r $
    \item[II.i)] $\sigma_{2\ell -1} = \sigma_{2\ell} = \omega_\ell^{-1/2}$
    \item[II.ii)] $\sigma_{2\ell -1} = \sigma_{2\ell} \in \mathbb{R}_{+} $
    \item[III.] $\sigma_{2\ell -1} = \sigma_{2\ell} = \gamma_\ell^{1/2}(\alpha_2-(\alpha_1+\alpha_2)^2)^{-1/2}$
\end{enumerate}
Note that $\Sigma$ depends on $m$, i.e, for I.ii), II.ii), and III. $m$ has to be determined (see Step 5) first. Then, and for cases I.i) and II.i)  $\Sigma=\text{diag}([\Sigma_0 \ 0_{m-r}])$ can be constructed.
%

%
\textbf{Step 5}: \textit{Determine sequence length $m$.}
If in Step 4, $\Sigma$ was constructed based on I.i) or II.i), $m=r+1$ or $m=2n+1$, respectively (proceed directly with Step 6). Otherwise, calculate
\begin{align}
	\hat{\omega}_{\pi(\ell)} = \delta_{\ell}\big(\sigma_{2\ell-1}\sigma_{2\ell}\big)^{-1}
	\label{algo_ES:w_skew}
\end{align} 
with $\sigma_{2\ell-1},\sigma_{2\ell}$ as designed in Step 4 for $\ell = 1,\ldots,\lfloor r/2 \rfloor$ and  permutation $\pi:\{1,\ldots,\lfloor r/2 \rfloor\}\rightarrow\{1,\ldots,\lfloor r/2 \rfloor\}$ such that $\hat{\omega}_{1} \ge \hat{\omega}_{2} \ge \cdots \hat{\omega}_{\lfloor r/2 \rfloor} \ge 0$ hold. Construct the  permutation matrix $\hat{R}(\pi)\in\mathbb{R}^{\lfloor r/2 \rfloor \times \lfloor r/2 \rfloor}$
as
\begin{align}
	\hat{R}(\pi) = \begin{bmatrix}
	    e_{2\pi(1)-1}\!\! &\!\! e_{2\pi(1)}\!\!&\!\!\cdots\!\!&\!\!e_{2\pi(r)-1}\!\! &\!\! e_{2\pi(r)}
	\end{bmatrix}.
\end{align}
Then, set $\hat{m}=r+1$ and apply the following steps:
\begin{enumerate}
	\item[(a)] Calculate $\tilde{P}(\hat{m})$ as defined in \eqref{eq:P_tilde}.
	\item[(b)] Calculate eigenvalues of $\tilde{P}(\hat{m})$ with $\mu \pm \omega_{k}i$ where $\mu \in \mathbb{R}, \omega_k \in \mathbb{R}_{\ge 0}$ for $k=1,\ldots,\lceil \hat{m}/2\rceil$ and $\omega_{1}\ge \omega_{2} \ge \cdots \ge \omega_{\lceil \hat{m}/2 \rceil}$.
	\item[(c)]   Check if the interlacing property 
	\begin{align}
		\omega_k \ge \hat{\omega}_{k} \ge \omega_{\lceil \hat{m}/2 \rceil - \lfloor r/2\rfloor+k}
		\label{algo_ES:interlacing}
	\end{align}
	for every $k=1,\ldots,\lfloor r/2 \rfloor$ is satisfied with $\hat{\omega}_{k}$ calculated in \eqref{algo_ES:w_skew}.
	\item[(d)] If (c) is true, $m = \hat{m}$. Otherwise, $\hat{m} \leftarrow \hat{m}+1$ and goto (a).
\end{enumerate}
%

%
\textbf{Step 7}: \textit{Calculate $V$. } 
Construct $V$ as specified in \eqref{pf:construct_V}. Therein, the required orthogonal matrix $\Theta\in\mathbb{R}^{(m-1)\times (m-1)}$ is calculated as
\begin{align}
    \Theta = \begin{bmatrix}\hat{R}(\pi) & 0 \\ 0 & I \end{bmatrix} \tilde{\Theta}
\end{align}
with $\tilde{\Theta}\in\mathbb{R}^{(m-1)\times (m-1)}$ based on the construction procedure for principle submatrices below (cf. \Cref{sec:fan_pall}), where in there 
\begin{align}
    &C = \tilde{P}(m) - I (\alpha_2-(\alpha_1 + \alpha_2)^2)
\end{align}
with $\tilde{P}(m)$ in \eqref{eq:P_tilde} and $[\hat{\omega}_{k}]_{k=1,\ldots,\lfloor r/2 \rfloor}$ calculated in \eqref{algo_ES:w_skew} has to be applied.
%

%
\textbf{Step 8}: \textit{Determine $W$. } 
Finally, construct $W$ according to \eqref{algo_ES:svd}.

\subsection{Construction Procedure of Principal Submatrix}
\label{sec:fan_pall}

In this section we present a procedure to construct an orthogonal matrix $\Theta \in \mathbb{R}^{p \times p}$, such that for a given skew-symmetric matrix $C \in \mathbb{R}^{p \times p}$ with eigenvalues $\pm \eta_\ell i,\ \eta_\ell \in \mathbb{R}_{\ge 0}$ for $\ell=1,\ldots,\lfloor p/2 \rfloor$, and for given values $[\hat{\omega}_{k}]_{k=1,\ldots,q}$ ($p\ge 2q$), which are satisfying the interlacing inequalities 
\begin{align}
\eta_k \ge \hat{\omega}_k \ge \eta_{\lceil p/2 \rceil - q +k},\quad k = 1,\ldots,q;
\label{alg:interlacing1}
\end{align}
it holds that
\begin{align*}
&\Theta^\top C \Theta = \begin{bmatrix} Q & * \\ * & * \end{bmatrix} \quad \text{with}  \\
&Q = \text{diag}([Q_1\ Q_2\ \ldots Q_q]),\quad Q_k = \begin{bmatrix} 0 & -\hat{\omega}_k \\ \hat{\omega}_k & 0 \end{bmatrix}.
\end{align*}
Thus, $Q$ is the principal submatrix of $C$, where $q$ and $[\hat{\omega}_{k}]_{k=1,\ldots,q}$ can be chosen w.r.t. \eqref{alg:interlacing1}.
The main procedure to construct $\Theta$ is given in \Cref{alg:theta_main} (p. \pageref{alg:theta_main}) as an iterative algorithm. In each iteration $j=1,\ldots,\lceil p/2 \rceil -q$, a $\Theta_j \in \mathbb{R}^{p \times p}$ is constructed, described in the sub-routine \Cref{alg:theta_sub} (p. \pageref{alg:theta_sub}), such that 
\begin{align}
(\Theta_1\Theta_2\cdots\Theta_j)^\top C \Theta_1\Theta_2\cdots\Theta_j = \begin{bmatrix}
D_j & * \\ * & *
\end{bmatrix}
\label{alg:org_trans}
\end{align}
is satisfied, where $D_j \in \mathbb{R}^{2\lfloor (p-j)/2\rfloor \times 2\lfloor (p-j)/2\rfloor}$ is a block diagonal skew-symmetric matrix with eigenvalues $\pm \nu_{k}i,\ \nu_k\in\mathbb{R}_{\ge 0}$, which are satisfying the interlacing inequalities
\begin{align}
\eta_k \ge \nu_k \ge \eta_{j +k},
\end{align}
for $k=1,\ldots,\lfloor p/2 \rfloor - j$, where $D_j$ is determined in the sub-routine \Cref{alg:PS} (p. \pageref{alg:PS}).

In particular, the sub-routine given in \Cref{alg:theta_sub} (p. \pageref{alg:theta_sub}), constructs a $\hat{\Theta} \in \mathbb{R}^{r\times r}$ such that for two given skew-symmetric matrices $D_1\in\mathbb{R}^{r \times r}$ and $D_2\in\mathbb{R}^{s \times s}$ in block diagonal form, where  $s = 2\lfloor (r-1)/2\rfloor$ (hence $s$ always even), it holds that
\begin{align}
\hat{\Theta} D_1 \hat{\Theta}^\top = \begin{bmatrix}D_2 & * \\ * & *\end{bmatrix},
\end{align}
and the eigenvalues $\pm \delta_k i, \delta_k\in\mathbb{R}_{\ge 0}$ for $k=1,\ldots,\lceil r/2 \rceil$ 
of $D_1$
and the eigenvalues  $\pm \zeta_k i, \zeta_k\in\mathbb{R}_{\ge 0}$ for $k=1,\ldots, s/2 $ 
of $D_2$ 
satisfy the interlacing inequality 
\begin{align}
\delta_1 \ge \zeta_1 \ge \delta_2 \ge \zeta_2 \ge \ldots \ge \zeta_{s/2} \ge \delta_{\lceil r/2 \rceil} \ge 0.
\end{align}
The sub-routine given in \Cref{alg:PS} (p. \pageref{alg:PS}) constructs a block diagonal skew-symmetric matrix $\hat{D} \in \mathbb{R}^{2(\lceil t/2 \rceil - 1) \times 2(\lceil t/2 \rceil -1)}$ with eigenvalues $\pm \nu_j i,\ \nu_j\in\mathbb{R}_{\ge 0}$, $j=1,\ldots,\lceil t/2 \rceil -1$, such that for a given skew-symmetric matrix $D\in \mathbb{R}^{t \times t}$ with eigenvalues $\pm \gamma_j i,\ \gamma_j\in\mathbb{R}_{\ge 0}$, $j=1,\ldots,\lceil t/2 \rceil$,  the interlacing inequality
\begin{align}
\gamma_1 \ge \nu_1 \ge \ldots \ge \nu_{\lceil t/2\rceil -1} \ge \gamma_{\lceil t/2 \rceil} \ge 0.
\end{align}
is satisfied. Additionally, the interlacing inequalities 
\begin{align}
\gamma_k \ge \hat{\omega}_k \ge \gamma_{\lceil t/2 \rceil-q+k},\quad k = 1,\ldots,q
\label{alg:interlacing2} 
\end{align}
hold.
Summarizing, \Cref{alg:PS} (p. \pageref{alg:PS}) computes a principle submatrix of a dimension that is
two (or one in the first iteration) less than in the previous iteration in \Cref{alg:theta_main} (p. \pageref{alg:theta_main}), while the interlacing property \eqref{alg:interlacing1} is preserved by \eqref{alg:interlacing2}. Then, \Cref{alg:theta_sub} (p. \pageref{alg:theta_sub}) constructs a $\Theta_j$ (iteration $j=1,\ldots,\lceil p/2 \rceil - q$) based on $D_j$, calculated in \Cref{alg:PS} (p. \pageref{alg:PS}), such that the computed principle submatrix is obtained by an orthogonal transformation as written in \eqref{alg:org_trans}. This is repeated until the principal submatrix is the block diagonal matrix with eigenvalues $[\hat{\omega}_{k}]_{k=1,\ldots,q}$.
Note that the function $\text{eigVal}(\cdot)$ in \Cref{alg:PS} and \Cref{alg:theta_sub} (p.\pageref{alg:PS}f) computes the eigenvalue of a matrix in decreasing order w.r.t to the imaginary part (in \Cref{alg:theta_sub} (p. \pageref{alg:theta_sub}) only skew-symmetric matrices are present). 
%
%

%
\begin{algorithm}[h]
	\caption{Calculate $\Theta$}
	\begin{algorithmic}[1]
		\State \textbf{Given:} $C \in \mathbb{R}^{p \times p}, [\hat{\omega}_{k}]_{k=1,\ldots,q}$
		\If{$p = 2q$}
		\State $[a_\ell \pm b_\ell i]_{\ell=1,\ldots,p/2} = \text{eigVec(C)}$
		\State $\Theta_1 = [a_1\ b_1\ a_2\ b_2\ \cdots \ a_{p/2}\ b_{p/2}]$
		\Else{\ $(p > 2q)$}
		\State $D_0 = C$
		\For{$j = 1,\ldots,\lceil p/2 \rceil-q$}
		\State $ D_j = \texttt{calcPSMatrix} (D_{j-1}, [\hat{\omega}_{k}]_{k=1,\ldots,q})$
		\State $\hat{\Theta}_j = \texttt{calcThetaSub} (D_{j-1},D_{j})$
		\State $\Theta_j = \begin{bmatrix} \hat{\Theta}_j & 0 \\ 0 & I \end{bmatrix} \in \mathbb{R}^{p \times p}$
		\EndFor
		\EndIf
		\State $\Theta \leftarrow \Theta_1\Theta_2\cdots\Theta_{\lceil p/2\rceil -q}$
	\end{algorithmic}
	\label{alg:theta_main}
\end{algorithm}
\begin{algorithm}[h]
	\caption{Sub-routine: calculate principal submatrix}
	\begin{algorithmic}[1]
		\Function{$\hat{D} = \texttt{calcPSMatrix}$}{$D,[\hat{\omega}_{k}]_{k=1,\ldots,q}$}
		\State $[\pm \gamma_k i]_{k=1,\ldots,\lceil t/2 \rceil} = \text{eigVal}(D)$)
		\For{$j = 1,\ldots,\lceil t/2 \rceil-2$} 
		\State $\rho = \{\hat{\omega} \in [\hat{\omega}_k]_{k=1,\ldots,q}:\gamma_j \le \hat{\omega} \le \gamma_{j+1}\}$
		\State $\nu_j =\max\{ \gamma_{j+1}, \rho \} $
		\EndFor
		\State $\nu_{\lceil t/2 \rceil-1} = \max \big(\hat{\omega}_{q},\gamma_{\lceil t/2 \rceil-2} \big)$
		\For{$j = 1,\ldots,\lceil t/2 \rceil-1$} 
		\State $N_j = \begin{bmatrix} 0 & -\nu_j \\ \nu_j & 0 \end{bmatrix}$
		\EndFor
		\State $\hat{D} = \text{diag}([N_1\ N_2\ \cdots \ N_{\lceil t/2 \rceil-1}]) $ 
		\EndFunction
	\end{algorithmic}
	\label{alg:PS}
\end{algorithm}
\begin{algorithm}[h]
	\caption{Sub-routine: calculate $\hat{\Theta}$}
	\begin{algorithmic}[1]
		\Function{$\hat{\Theta}$ = $\texttt{calcThetaSub}$}{$D_1,D_2$}
		\State $[\pm \delta_k i]_{k=1,\ldots,\lceil r/2 \rceil} = \text{eigVal}(D_1)$)
		\State $[\pm \zeta_k i]_{k=1,\ldots, s/2 } = \text{eigVal}(D_2)$)
		\For{$j = 1, \ldots, \lceil r/2 \rceil -1$} 
		\State $x_{j} = \Big(\displaystyle\prod_{i=1}^{\lfloor r/2 \rfloor}(\zeta_j^2-\delta_i^2)\Big)$,\quad $y_j = \Big(2\displaystyle\prod_{\stackrel{i=1}{i \neq j}}^{\lceil r/2 \rceil -1}(\zeta_j^2-\zeta_i^2)\Big)$
		\EndFor
		\If{$r$ even}
		\For{$j = 1, \ldots, r/2 -1$} 
		\If{$y_j == 0$}
		\State $z_{2j-1} = 0$
		\State $z_{2j} = 0$
		\Else
		\State $z_{2j-1} = (-x_j y_j^{-1} \nu_j^{-1})^{1/2}$
		\State $z_{2j} = z_{2j-1}$
		\EndIf
		\EndFor
		\State $z_{r-1} = \Big(\displaystyle \prod_{i=1}^{r/2} \delta_i\Big)\Big(\displaystyle \prod_{i=1}^{r/2-1} \zeta_i^{-1} \Big)$
		\State $z = [z_1\ z_2\ \cdots\ z_{r-2}]^\top$
		\State $\bar{D}_1 = 
		\begin{bmatrix}
		D_2 & 0 & z \\
		0 & 0 & z_{r-1} \\
		-z^\top & -z_{r-1} & 0 
		\\
		\end{bmatrix}$
		\State $[a_\ell \pm b_\ell i]_{\ell = 1,\ldots,r/2} = \text{eigVec}(D_1)$
		\State $[c_\ell \pm d_\ell i]_{\ell = 1,\ldots,r/2} = \text{eigVec}(\bar{D}_1)$
		\State $\Theta = [a_1\ b_1\ a_2\ b_2 \cdots \ a_{r/2} \ b_{r/2}]$
		\State $\bar{\Theta} = [c_1\ d_1\ c_2\ d_2 \cdots \ c_{r/2} \ d_{r/2}]$ 
		\Else {\ ($r$ odd)}
		\For{$j = 1, \ldots, \lfloor r/2 \rfloor $} 
		\If{$y_j == 0$}
		\State $z_{2j-1} = 0$
		\State $z_{2j} = 0$
		\Else
		\State $z_{2j-1} = (-x_j y_j^{-1})^{1/2}$
		\State $z_{2j} = z_{2j-1}$
		\EndIf
		\EndFor
		\State $z = [z_1\ z_2\ \cdots\ z_{ r-1}]^\top$
		\State $\bar{D}_1 = 
		\begin{bmatrix}
		D_2 & z \\
		-z^\top & 0 
		\\
		\end{bmatrix}$
		\State $[a_\ell \pm b_\ell i]_{\ell = 1,\ldots,\lceil r/2\rceil } = \text{eigVec}(D_1)$
		\State $[c_\ell \pm d_\ell i]_{\ell = 1,\ldots,\lceil r/2\rceil} = \text{eigVec}(\bar{D}_1)$
		\State $\Theta = [a_1\ b_1\ a_2\ b_2 \cdots \ a_{\lfloor r/2\rfloor} \ b_{\lfloor r/2\rfloor}\ a_{\lceil r/2\rceil}]$
		\State $\bar{\Theta} = [c_1\ d_1\ c_2\ d_2 \cdots \ c_{\lfloor r/2\rfloor} \ d_{\lfloor r/2\rfloor}\ c_{\lceil r/2\rceil}]$
		\EndIf
		\State $\hat{\Theta} = \Theta \bar{\Theta}$
		\EndFunction
	\end{algorithmic}
	\label{alg:theta_sub}
\end{algorithm}

\begin{rem}
	Note that \Cref{alg:theta_sub} (p. \pageref{alg:theta_sub}) is the construction procedure of the sufficient interlacing eigenvalue statement in \Cref{thm:fan_pall}, whereas \Cref{alg:theta_main} (p. \pageref{alg:theta_main}) is the construction procedure of \Cref{thm:fan_pall_mult}. 
	To verify this constructive approach we follow the lines of the proof of \cite[Lemma B.3.]{marshall1979inequalities}, in which symmetric matrices that satisfy the interlacing property \eqref{eq:fan_pall1} are considered. Because of minor changes in the proof, we present only the case for $p$ odd; thus, we construct $\bar{D}_1 \in \mathbb{R}^{p\times p}$ with $D_2\in\mathbb{R}^{(p-1)\times(p-1)}$ and $z\in\mathbb{R}^{p-1}$ as specified in \Cref{alg:theta_sub} (p. \pageref{alg:theta_sub}). Then
	\begin{align}
	g(\lambda) :=& \det(\lambda I - \bar{D}_1) = \prod_{i=1}^{\lfloor p/2\rfloor}(\lambda^2+\delta_i^2),
	\\
	f(\lambda) :=& \det(\lambda I - D_2) = \prod_{i=1}^{\lfloor p/2\rfloor}(\lambda^2+\nu_i^2)
	\nonumber \\ 
	=& g(\lambda)(\lambda + z^\top (\lambda I - \bar{D}_1)^{-1}z) 
	\nonumber \\
	=& \lambda g(\lambda)\Big(1 + \sum_{i=1}^{\lfloor p/2\rfloor}\frac{z_{2i-1}^2+z_{2i}^2}{\lambda^2+\delta_i^2}\Big)
	\end{align}
	hold. Let for $j=1,\ldots,\lfloor p/2 \rfloor$
	\begin{align}
	\!z_{2j-1}^2 = z_{2j}^2 = - \Big(\displaystyle\prod_{i=1}^{\lfloor p/2\rfloor}(\delta_j^2-\nu_i^2)\Big)\Big(2\displaystyle\prod_{\stackrel{i=1}{i \neq j}}^{\lceil p/2\rceil-1}(\delta_j^2-\delta_i^2)\Big)^{-1}
	\end{align}
	then with $\lambda = \pm \nu_{\ell}i$, $\ell = 1,\ldots,\lfloor p/2\rfloor$
	\begin{align}
	\sum_{i=1}^{\lfloor p/2\rfloor}\frac{z_{2i-1}^2+z_{2i}^2}{\lambda^2+\delta_i^2} = - \sum_{i=1}^{\lfloor p/2\rfloor}  \frac{\displaystyle \prod_{\stackrel{k=1}{k \neq \ell}}^{\lfloor p/2\rfloor}(\delta_i^2-\nu_k^2) }{\displaystyle \prod_{\stackrel{k=1}{k \neq i}}^{\lfloor p/2\rfloor}(\delta_i^2-\delta_k^2)} = -1.
	\end{align}
	Thus, $f(\pm \nu_{\ell}i)=0$, $\ell = 1,\ldots,\lfloor p/2\rfloor$ and therefore $\pm \nu_{\ell}i$ are the eigenvalues of $\bar{D}_1$, while the eigenvalues of the principal submatrix $D_2$ are $\pm\delta_{\ell}i$, $\ell = 1,\ldots,\lfloor p/2\rfloor$. 
\end{rem}

\end{document}